\newtheorem{Theorem}{Theorem}[section]
\newtheorem{theorem}[Theorem]{Theorem}
\newtheorem{proposition}[Theorem]{Proposition}
\newtheorem{prop}[Theorem]{Proposition}
\newtheorem{corollary}[Theorem]{Corollary}
\newtheorem{lemma}[Theorem]{Lemma}
\newtheorem{Fact}[Theorem]{Fact}
\newtheorem{fact}[Theorem]{Fact}
\newtheorem{remark/def}[Theorem]{Remark/Definition}
\newtheorem{claim}[Theorem]{Claim}
\newtheorem*{maincor}{Corollary \ref{cor:char of KP and SH in Sigma}}
\theoremstyle{definition}
\newtheorem{example}[Theorem]{Example}
\newtheorem{remark}[Theorem]{Remark}
\newtheorem{definition}[Theorem]{Definition}
\newtheorem{question}[Theorem]{Question}
\newtheorem{def/rem}[Theorem]{Definition/Remark}
\newtheorem{def/fact}[Theorem]{Definition/Fact}
\newtheorem{not/rem}[Theorem]{Notation/Remark}
\newsavebox{\indbin}
\savebox{\indbin}{\begin{picture}(0,0)
\newlength{\gnu}
\settowidth{\gnu}{$\smile$} \setlength{\unitlength}{.5\gnu}
\put(-1,-.65){$\smile$} \put(-.25,.1){$|$}
\end{picture}}
\newcommand{\be}{\begin{enumerate}}
\newcommand{\bi}{\begin{itemize}}
\newcommand{\bd}{\begin{defn}}
\newcommand{\bs}{\boldsymbol}
\newcommand{\bt}{\begin{theorem}}
\newcommand{\bl}{\begin{lemma}}
\newcommand{\ee}{\end{enumerate}}
\newcommand{\ei}{\end{itemize}}
\newcommand{\ed}{\end{defn}}
\newcommand{\et}{\end{theorem}}
\newcommand{\el}{\end{lemma}}
\newcommand{\id}{\operatorname{id}}
\newcommand{\aut}{\operatorname{Aut}}
\newcommand{\aute}{\operatorname{Aut}_{\bs{e}}}
\newcommand{\autf}{\operatorname{Autf}}
\newcommand{\LL}{\operatorname{L}}
\newcommand{\gal}{\operatorname{Gal}}
\newcommand{\gall}{\gal_{\LL}}
\newcommand{\KP}{\operatorname{KP}}
\newcommand{\SH}{\operatorname{S}}
\newcommand{\galsh}{\gal_{\SH}}
\newcommand{\galkp}{\gal_{\KP}}
\newcommand{\autfl}{\operatorname{Autf_{\LL}}}
\newcommand{\autflres}{\operatorname{Autf_{\LL}^{res}}}
\newcommand{\autfkp}{\operatorname{Autf_{\KP}}}
\newcommand{\autfsh}{\operatorname{Autf_{\SH}}}
\newcommand{\lequiv}{\equiv^{\LL}}
\newcommand{\kpequiv}{\equiv^{\KP}}
\newcommand{\sequiv}{\equiv^{\operatorname{S}}}
\newcommand{\shequiv}{\equiv^{\operatorname{S}}}
\def\res{\operatorname{res}}
\def\dcl{\operatorname{dcl}}
\def\bdd{\operatorname{bdd}}
\def\acl{\operatorname{acl}}
\def\tp{\operatorname{tp}}
\begin{document}

\keywords{relativized Galois group, Lascar tuple, Galois correspondence, strong type, type-definable equivalence relation}
\subjclass[2010]{Primary 03C99}

\title{Relativized Galois groups of first order theories over a hyperimaginary}

\date{\today}

\author[H. Lee]{Hyoyoon Lee$^{\dagger}$}
\address{$^\dagger$Department of Mathematics, Yonsei University\\ 50 Yonsei-Ro, Seodaemun-Gu\\ Seoul, 03722, South Korea\\ ORCiD:0000-0002-4745-8663}
\email{hyoyoonlee@yonsei.ac.kr}

\author[J. LEE]{Junguk Lee$^{\ddagger}$ $^{\ast}$}
\address{$^{\ddagger}$Department of Mathematics, Changwon National University\\ 20 Changwondaehak-ro, Uichang-gu\\ Changwon-si, Gyeongsangnam-do, 51140, South Korea\\ ORCiD: 0000-0002-2150-6145}
\email{ljwhayo@changwon.ac.kr}

 \thanks{$^{\dagger}$The first author is supported by NRF of Korea grants 2021R1A2C1009639, the Yonsei University Research Fund(Post Doc. Researcher Supporting Program) of 2023 (project no.: 2023-12-0159), and the Research Institute for Mathematical Sciences, an International Joint Usage/Research Center located in Kyoto University. $^{\ddagger}$The second author is supported by Basic Science Research Program through the National Research Foundation of Korea (NRF) funded by the Ministry of Education(2022R1I1A1A0105346711) and supported by the new faculty Financial Program at Changwon National University in 2023. $^{\ast}$Corresponding author.\\
\indent The authors thank the anonymous referee for careful reading and all the comments and suggestions. Especially, Example \ref{example:necessary condition on H}  and the closedness of $\aut_{\equiv^H}(\mathfrak{C},\bs{e})$ in Remark \ref{rem:res_aut_ptwise_top} are suggested by the referee.  
}

\begin{abstract}
We study relativized Lascar groups, which are formed by relativizing Lascar groups to the solution set of a partial type $\Sigma$. We introduce the notion of a Lascar tuple for $\Sigma$ and
by considering the space of types over a Lascar tuple for $\Sigma$, the topology for a relativized Lascar group is (re-)defined and some fundamental facts about the Galois groups of first-order theories are generalized to the relativized context.

In particular, we prove that any closed subgroup of a relativized Lascar group corresponds to a stabilizer of a bounded hyperimaginary having at least one representative in the solution set of the given partial type $\Sigma$.
Using this, we find the correspondence between subgroups of the relativized Lascar group and the relativized strong types.
% \sout{We also compare the relativized notion with the restricted one, and provide a condition when two notions coincide.}
%Because all of them holds even when we consider over a hyperimaginary, we develop the arguments over a hyperimaginary.
\end{abstract}

\maketitle

% \section{Introduction}\label{sec:intro}
D. Lascar introduced the notion of Lascar (Galois) group of a first-order theory in his paper \cite{L82}, which is proven to have Galois theoretic correspondence between the subgroups of the automorphism group and the notions of model theoretic strong types (cf.  \cite{CLPZ01, LP01}).
That is, the Lascar, KP, and Shelah strong types, which play crucial roles in the study of first-order theories, are exactly the the same as the orbit equivalence relations defined by the corresponding subgroups of the Lascar group.

In \cite{DKL17}, the relativized Lascar group, which are formed by restricting the domain of the automorphisms that fix the Lascar strong types in a parital type, was introduced to study the first homology groups in model theory.
And then the relativized Lascar group was studied further in \cite{DKKL21}, proving that under some conditions, the relativized Lascar groups of Shelah strong types of some finite tuples are all isomorphic.
In \cite{KL23}, the Galois theoretic correspondence for the Lascar group and the results concerning the relativized Lascar group in the previous papers were extended and generalized to the context of hyperimaginaries (= $\emptyset$-type-definable equivalence classes).
There are also studies connecting the Lascar group and topological dynamics (such as \cite{KPR18}), but this paper does not delve into that topic.

Our contribution is centered around the extension of the correspondence between subgroups of the Lascar group and the strong types over hyperimaginaries in \cite{KL23} to the relativized context.
Because we prove every statement in this paper over a fixed hyperimaginary $\bs{e}$, we often encounter more technical difficulties;
we tried our best to prove and explain every such detail as much as we could, and sometimes separated them as remarks and claims to emphasize that hyperimaginaries can cause subtleties.

There are two natural ways to relativize a subgroup of automorphisms to a partial type $\Sigma$:
One is simply by restricting the domains of the automorphisms in the subgroup directly (Definition \ref{def:restricted groups}, Definition \ref{def:general definition of subgroup}(2)), and the other one is collecting restricted automorphisms that fixes the corresponding orbits in the partial type (Definition \ref{def:relativized Galois groups}, Definition \ref{def:general definition of subgroup}(1)).
We will compare them in Section \ref{sec:rest and example}, but until then, we will focus on the latter definition, re-define the topology of the relativized Lascar group, and find the correspondence between the subgroups and the strong types in the given partial type.

Given a hyperimaginary $\bs{e}$, by assuming that a type $\Sigma$ is $\bs{e}$-invariant and there is a small tuple $c$ of realizations of $\Sigma$ such that $\bs{e} \in \dcl(c)$, we successfully generalize classical results to the relativized context and prove following Corollary \ref{cor:char of KP and SH in Sigma}, which says that having the same strong type in the solution set of $\Sigma$ can be, in some sense, localized;
for the definitions of $\bdd(\bs{e}) \cap \Sigma$, $\pi_b$ and $\gal_{\LL}^c(\Sigma, \bs{e})$, see preliminaries, Remark \ref{remark:bdd(e) cap Sigma}, Remark \ref{remark:diagram of maps}, and Definition \ref{definition: canonical sugbroups}.

\begin{maincor}
Let $\Sigma$ be an $\bs{e}$-invariant type and suppose $\bs{e} \in \dcl(\Sigma)$.
Let $c$ and $d$ be tuples of realizations of $\Sigma$.
\begin{enumerate}
    \item The following are equivalent.
    \begin{enumerate}
        \item $c \kpequiv_{\bs{e}} d$.
        \item $c \equiv_{\bdd(\bs{e}) \cap \Sigma} d$.
        \item There is $\sigma \in \pi_b^{-1}[\gal_{\LL}^c(\Sigma, \bs{e})]$ such that $\sigma(c) = d$.
        % \item There is $\sigma \in \autfkpres(\Sigma, \bs{e})$
    \end{enumerate}
    \item The following are equivalent.
    \begin{enumerate}
        \item $c \shequiv_{\bs{e}} d$
        \item $c \equiv_{\acl(\bs{e}) \cap \Sigma} d$.
        \item There is $\sigma \in \pi_b^{-1}[\gal_{\LL}^0(\Sigma, \bs{e})]$ such that $\sigma(c) = d$.
    \end{enumerate}
\end{enumerate}
\end{maincor}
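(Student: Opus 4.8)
The plan is to prove, in each of (1) and (2), the equivalences (a)$\Leftrightarrow$(b) and (b)$\Leftrightarrow$(c) separately. The first is an elementary localization of the relativized strong type to the sort of the tuple under consideration, and it uses the hypotheses of the corollary only insofar as they guarantee that any quotient of $c$ (or of $d$) has a representative among realizations of $\Sigma$; the second is where the Galois correspondence of the main theorem --- closed subgroups of $\gal_{\LL}(\Sigma,\bs{e})$ versus stabilizers of bounded hyperimaginaries with a representative in $\Sigma(\FC)$ --- is applied.

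For (a)$\Leftrightarrow$(b) in (1), recall from the preliminaries that $c\kpequiv_{\bs{e}}d$ holds iff there is $\tau\in\autfkp(\FC/\bs{e})$ with $\tau(c)=d$, iff $E(c,d)$ for every bounded $\bs{e}$-type-definable equivalence relation $E$ on the sort of $c$, and that $\autfkp(\FC/\bs{e})$ fixes every bounded hyperimaginary over $\bs{e}$. If $c\kpequiv_{\bs{e}}d$ and $\bs{b}\in\bdd(\bs{e})\cap\Sigma$, then $\tau$ as above fixes $\bs{b}$ and sends $c$ to $d$, so $c\equiv_{\bs{b}}d$; hence (1b). Conversely, if (1b) holds and $E$ is any bounded $\bs{e}$-type-definable equivalence relation on the sort of $c$, then the hyperimaginary $c/E$ is bounded over $\bs{e}$ and has the representative $c$, which is a tuple of realizations of $\Sigma$; thus $c/E\in\bdd(\bs{e})\cap\Sigma$, so $c\equiv_{c/E}d$, which forces $E(c,d)$. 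As $E$ was arbitrary, $c\kpequiv_{\bs{e}}d$. Part (2) is obtained verbatim with $\autfkp$, $\bdd$, ``bounded $\bs{e}$-type-definable equivalence relation'' replaced by $\autfsh$, $\acl$, ``$\bs{e}$-invariant equivalence relation with finitely many classes'', in which case $c/E$ becomes an algebraic hyperimaginary over $\bs{e}$ lying in $\acl(\bs{e})\cap\Sigma$.

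For (b)$\Leftrightarrow$(c): by Definition \ref{definition: canonical sugbroups}, $\gal_{\LL}^c(\Sigma,\bs{e})$ and $\gal_{\LL}^0(\Sigma,\bs{e})$ are closed subgroups of $\gal_{\LL}(\Sigma,\bs{e})$, so the main theorem attaches to each of them a bounded hyperimaginary $\bs{b}$ with a representative in $\Sigma(\FC)$ for which the orbit equivalence relation induced on $\Sigma(\FC)$ by $\pi_b^{-1}$ of that subgroup is exactly $x\equiv_{\bs{b}}y$; the remaining task is to identify these $\bs{b}$. Since $\gal_{\LL}^c(\Sigma,\bs{e})$ is the closure of $\{1\}$, hence the smallest closed subgroup of $\gal_{\LL}(\Sigma,\bs{e})$, and since the stabilizer of each $\bs{b}'\in\bdd(\bs{e})\cap\Sigma$ is a closed subgroup containing it, the hyperimaginary attached to $\gal_{\LL}^c(\Sigma,\bs{e})$ refines every $\bs{b}'\in\bdd(\bs{e})\cap\Sigma$ while itself lying in $\bdd(\bs{e})\cap\Sigma$; hence $x\equiv_{\bs{b}}y$ is precisely $x\equiv_{\bdd(\bs{e})\cap\Sigma}y$ on $\Sigma(\FC)$, and (1b)$\Leftrightarrow$(1c) follows. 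For (2) one argues the same way after checking that the clopen subgroups of $\gal_{\LL}(\Sigma,\bs{e})$ are exactly the stabilizers of the algebraic hyperimaginaries with a representative in $\Sigma(\FC)$ (the clopen subgroups of the quasi-compact group $\gal_{\LL}(\Sigma,\bs{e})$ being its finite-index closed subgroups), so that intersecting them corresponds to joining those hyperimaginaries and yields $x\equiv_{\acl(\bs{e})\cap\Sigma}y$. (If Definition \ref{definition: canonical sugbroups} already introduces $\gal_{\LL}^c$ and $\gal_{\LL}^0$ as the subgroups corresponding under the main theorem to $\bdd(\bs{e})\cap\Sigma$ and $\acl(\bs{e})\cap\Sigma$, this whole step is immediate.)

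The step I expect to be most delicate is the identification of $\gal_{\LL}^0(\Sigma,\bs{e})$ with $\acl(\bs{e})\cap\Sigma$: it requires the relativized analogue of the classical matching between clopen subgroups of the Lascar group and finite-index (equivalently, finitely-many-class) data, which rests on the quasi-compactness of $\gal_{\LL}(\Sigma,\bs{e})$ in the topology induced by the space of types over a Lascar tuple for $\Sigma$. One should also make sure that the ``join'' hyperimaginaries occurring above are genuinely bounded (resp.\ algebraic) over $\bs{e}$ and really do possess a single representative inside $\Sigma(\FC)$, so that the main theorem literally applies to their stabilizers; these points should be covered by the constructions of the preliminaries and Remark \ref{remark:bdd(e) cap Sigma}. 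If instead Definition \ref{definition: canonical sugbroups} packages the two canonical subgroups directly through the correspondence, then the only genuine content of the corollary is the elementary localization carried out in the second paragraph.
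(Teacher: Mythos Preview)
Your approach is essentially the paper's: the equivalence (b)$\Leftrightarrow$(c) is exactly Lemma~\ref{lemma:preimages are bdd and acl}, whose proof is the Galois-correspondence argument you sketch (intersecting stabilizers over $\bdd(\bs{e})\restriction\Sigma$, resp.\ over $\acl(\bs{e})\restriction\Sigma$ via the clopen/finite-index identification); and your (b)$\Rightarrow$(a) is the content of Proposition~\ref{prop:id clousre, component finest}. The paper closes the cycle as (a)$\Rightarrow$(b)$\Leftrightarrow$(c)$\Rightarrow$(a), while you do (a)$\Leftrightarrow$(b) and (b)$\Leftrightarrow$(c), but the substance is the same.

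There is, however, a genuine technical gap in your (b)$\Rightarrow$(a). You write ``the hyperimaginary $c/E$'' for a bounded $\bs{e}$-type-definable equivalence relation $E$, and place it in $\bdd(\bs{e})\cap\Sigma$. But a hyperimaginary is by definition a class of an $\emptyset$-type-definable equivalence relation (Definition~\ref{def:hyperimaginary}); $c/E$ is not one. The paper handles this via Fact~\ref{fact:e-hi is empty hi}: using the hypothesis $\bs{e}\in\dcl(\Sigma)$ one picks a tuple $b$ of realizations of $\Sigma$ with $\bs{e}\in\dcl(b)$ and replaces $c_E$ by an $\bs{e}$-interdefinable genuine hyperimaginary $(cb)_{F'}$; crucially, $cb$ is still a tuple of realizations of $\Sigma$, so $(cb)_{F'}\in\bdd(\bs{e})\restriction\Sigma$. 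This is precisely where the hypothesis $\bs{e}\in\dcl(\Sigma)$ enters your argument, and you should make it explicit rather than defer to ``constructions of the preliminaries''. The same remark applies, with more force, to the Shelah case: your characterization ``$\bs{e}$-invariant equivalence relation with finitely many classes'' is not the one in Fact~\ref{fact:characterization of equivalence relations}(3)(c) (which requires $E'$ to be type-definable and only asks that $c_{E'}$ have finitely many $\bs{e}$-conjugates), and without type-definability you cannot form any hyperimaginary at all; with the correct characterization and Fact~\ref{fact:e-hi is empty hi} your argument goes through exactly as in the proof of Proposition~\ref{prop:id clousre, component finest}(2).
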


Section \ref{sec:pre} is dedicated to presenting preliminaries.
In Subsection \ref{sec:pre hyperimaginary}, we recall definitions and facts about hyperimaginaries and Lascar groups.
In particular, the characterizations of Lascar, KP, and Shelah strong types over a hyperimagianry are given.
In the follwing Subsection \ref{sec:pre relativized}, we recall the definition of the relativized Galois groups of first-order theories over a hyperimaginary.

In Section \ref{sec:top on rel}, we develop most of the main results. We first introduce the notion of a Lascar tuple over a hyperimaginary $\bs{e}$ for an $\bs{e}$-invariant partial type $\Sigma$ and show its existence (Definition \ref{def:Lascar tuple} and Lemma \ref{lem:existence of Lascar tuple}). Under the assumption that there is a tuple $c$ of realizations of $\Sigma$ with $\bs{e}\in\dcl(c)$, we define a topology $\mathfrak{t}$ on a relativized Lascar group $\gall(\Sigma,\bs{e})$ of $\Sigma$, which is induced by the surjective map from the space of types over a Lascar tuple. The topology $\mathfrak{t}$ turns out to coincide with the quotient topology given from the ordinary Lascar group of the theory, and so $\gall(\Sigma,\bs{e})$ becomes a quasi-compact group with the topology $\mathfrak{t}$.

The key result in Section \ref{sec:top on rel} is the Galois theoretic correspondence between a closed subgroup of $\gall(\Sigma,\bs{e})$ (which can be characterized by its orbit equivalence relation) and a bounded hyperimaginary, whose set of representatives intersects with the set of tuples of realizations of $\Sigma$ (Proposition \ref{prop:description of closed subgroup} and Lemma \ref{lemma:finite characterization of orbit equiv}). The Lascar tuple plays an essential role in the proof of this correspondence, Proposition \ref{prop:description of closed subgroup}. As a corollary, we find a correspondence between the relativized KP [Shelah] strong type and the closure of the trivial subgroup [the connected component containing the identity] of the relativized Lascar group (Theorem \ref{thm:description of KP type} and Corollary \ref{cor:char of KP and SH in Sigma}).

In Section \ref{sec:rest and example}, we recall another method of considering automorphisms only in the solution set of a partial type by restricting the domain, and compare with the relativized one.
We prove that if the orbit equivalence relation induced from a given subgroup is weaker than having the same Lascar strong type, then the relativized subgroup of the automorphism group is the same as the smallest group containing the restricted one and the relativized Lascar group.
%{\color{blue}We also provide a pure-set example where the restricted and relativized Lascar groups are different, which is suggested by the referee.}
%We also present an example that the restricted and relativized Lascar groups are the same but the Lascar group of the theory is not the same with them.

\section{Preliminaries}\label{sec:pre}

\subsection{Hyperimaginaries and model theoretic Galois groups}\label{sec:pre hyperimaginary}

Throughout, let $T$ be a complete first-order theory and $\mathfrak{C}$ be a monster model of $T$.
Recall that an object is said to be {\em small} if its size is less than the degree of strong homogeneity and saturation of $\mathfrak{C}$.
We will always consider small ordinals and cardinals, and small sets, tuples and models in $\mathfrak{C}$, unless otherwise stated.

Any reader who does not want to develop the arguments over a hyperimaginary (to avoid complexity) may assume that the hyperimaginary $\bs{e}$ throughout this paper to be an emptyset or interdefinable (Definition \ref{definition: interdefinable}) with an emptyset, but we need basic definitions and facts about hyperimaginaries to proceed the arguments even when $\bs{e} = \emptyset$.

The proofs for basic properties of hyperimaginaries can be found on \cite{C11} and \cite{K14}.
Most of the basic definitions and facts on the Lascar group can be found on \cite{KL23}, \cite{Lee22} and \cite{K14}, which collect and generalize the fundamental results in \cite{Z02}, \cite{CLPZ01}, \cite{LP01} and more papers, to the context of hyperimaginaries.

We denote the automorphism group of $\mathfrak{C}$ by $\aut(\mathfrak{C})$ and for $A\subseteq \mathfrak{C}$, we denote the set of automorphisms of $\mathfrak{C}$ fixing $A$ pointwise by $\aut_A(\mathfrak{C})$.

\begin{definition}\label{def:basic properties of equiv rel}
Let $E$ be an equivalence relation defined on $\mathfrak{C}^{\alpha}$, where $\alpha$ is an ordinal, so that its equivalence classes are sets of $\alpha$-tuples of elements of $\mathfrak{C}$.
Let $A \subseteq \mathfrak{C}$.
Then $E$ is said to be
\begin{enumerate}
    \item {\em finite} if the number of its equivalence classes is finite,
    \item {\em bounded} if the number of its equivalence classes is small,
    \item {\em $A$-invariant} if for any $f \in \aut_A(\mathfrak{C})$, $E(a, b)$ if and only if $E(f(a), f(b))$,
    \item {\em $A$-definable} if there is a formula $\varphi(x, y)$ over $A$ such that $\models \varphi(a, b)$ if and only if $E(a, b)$ holds, just {\em definable} if it is definable over some parameters, and
    \item {\em $A$-type-definable} if there is a partial type $\Phi(x, y)$ over $A$ such that $\models \Phi(a, b)$ if and only if $E(a, b)$ holds, just {\em type-definable} if it is type-definable over some parameters.
\end{enumerate}
\end{definition}

\begin{definition}\label{def:hyperimaginary}
Let $E$ be an $\emptyset$-type-definable equivalence relation on $\mathfrak{C}^{\alpha}$. An equivalence class of $E$ is called a {\em hyperimaginary} and it is denoted by $a_E$ if it has a representative $a$. A hyperimaginary $a_E$ is said to be {\em countable} if $|a|$ is countable.
\end{definition}

To make hyperimaginaries easily distinguishable from real tuples, we will write hyperimaginaries in boldface.

\begin{definition}\label{def:auto group} For a hyperimaginary $\bs{e}$,
$$\aut_{\bs{e}}(\mathfrak{C}):= \{f \in \aut(\mathfrak{C}): f(\bs{e}) = \bs{e} \text{ (setwise)}\}.$$

We say an equivalence relation $E$ is {\em $\bs{e}$-invariant} if for any $f \in \aute(\mathfrak{C})$, $E(a, b)$ holds if and only if $E(f(a), f(b))$ holds.
\end{definition}

\begin{definition}\label{definition: interdefinable}
For a hyperimaginary $\bs{e}$, we say that two `objects' (e.g. elements of $\mathfrak{C}$, tuples of equivalence classes, enumerations of sets) $b$ and $c$ are {\em interdefinable over $\bs{e}$} if for any $f \in \aute(\mathfrak{C})$, $f(b) = b$ if and only if $f(c) = c$.
We may omit `over $\bs{e}$' if $\bs{e} = \emptyset$.
\end{definition}

%The first item of the following fact says that a {\bf single} hyperimaginary can `replace' any enumeration of (small) sets of elements and tuples of hyperimaginaries.
%Thus by varying $\bs{e}$, $\aut_{\bs{e}}(\mathfrak{C})$ can be put as any automorphism group which fixes some set of objects pointwise.

\begin{fact}[{\cite[Remark 4.1.2 and Lemma 4.1.3]{K14}} or {\cite[Lemmas 15.3 and 15.4]{C11}}]\label{fact:countable hyperimagianry is enough}\hspace{-1cm}$ $
\begin{enumerate}
    \item Any tuple (of elements) in $\mathfrak{C}$, any tuple of imaginaries of $\mathfrak{C}$, and any tuple of hyperimaginaries are interdefinable with a single hyperimaginary.
    \item Any hyperimaginary is interdefinable with a sequence of countable hyperimaginaries.
\end{enumerate}
\end{fact}

{\bf Until the end of this paper, we fix some arbitrary $\emptyset$-type-definable equivalence relation $E$ and a hyperimagianry $\bs{e} := a_E$.}

\begin{definition}\label{def:dcl, acl, bdd}$ $
\begin{enumerate}
    \item A hyperimaginary $\bs{e}'$ is {\em definable over $\bs{e}$} if $f(\bs{e}') = \bs{e}'$ for any $f \in \aute(\mathfrak{C})$.
    \item A hyperimaginary $\bs{e}'$ is {\em algebraic over $\bs{e}$} if $\{f(\bs{e}'): f \in \aute(\mathfrak{C})\}$ is finite.
    \item A hyperimaginary $\bs{e}'$ is {\em bounded over $\bs{e}$} if $\{f(\bs{e}'): f \in \aute(\mathfrak{C})\}$ is small.
    \item The {\em definable closure} of $\bs{e}$, denoted by $\dcl(\bs{e})$ is the set of all countable hyperimaginaries $\bs{e}'$ definable over $\bs{e}$.
    \item The {\em algebraic closure} of $\bs{e}$, denoted by $\acl(\bs{e})$ is the set of all countable hyperimaginaries $\bs{e}'$ algebraic over $\bs{e}$.
    \item The {\em bounded closure} of $\bs{e}$, denoted by $\bdd(\bs{e})$ is the set of all countable hyperimaginaries $\bs{e}'$ bounded over $\bs{e}$.
\end{enumerate}
\end{definition}

\begin{def/rem}\label{def/rem: dcl acl bdd}$ $
\begin{enumerate}
    \item By Fact \ref{fact:countable hyperimagianry is enough}(2), if $f \in \aut(\mathfrak{C})$ fixes $\bdd(\bs{e})$ elementwise, then for any hyperimaginary $\bs{e}'$ which is bounded over $\bs{e}$, $f(\bs{e}') = \bs{e}'$.
    Similar statements also hold for $\dcl(\bs{e})$ and $\acl(\bs{e})$.
    \item By (1), for a hyperimaginary $b_F$ which is possibly not countable, we write $b_F \in \bdd(\bs{e})$ if $b_F$ is bounded over $\bs{e}$.
    We use notation $b_F \in \dcl(\bs{e})$, $\acl(\bs{e})$ in a similar way.
    \item Each of $\dcl(\bs{e})$, $\acl(\bs{e})$, and $\bdd(\bs{e})$ is small and interdefinable with a single hyperimaginary (cf. \cite[Proposition 15.18]{C11}).
    Thus it makes sense to consider $\aut_{\bdd(\bs{e})}(\mathfrak{C})$.
\end{enumerate}
\end{def/rem}

The complete type of a hyperimaginary over a hyperimaginary, and the equality of such complete types are type-definable:

\begin{definition}[{\cite[Section 4.1]{K14}}]\label{definition: hyperimaginary type}
Let $b_F$ and $c_F$ be hyperimaginaries.
\begin{enumerate}
    \item The {\em complete type of $b_F$ over $\bs{e}$}, $\tp_x(b_F/\bs{e})$ is a partial type over $a$,
    $$\exists z_1 z_2 (\tp_{z_1 z_2}(ba) \wedge F(x, z_1) \wedge E(a, z_2)),$$    
     whose solution set is the union of automorphic images of $b_F$ over $\bs{e}$.
    \item We write $b_F\equiv_{\bs{e}} c_F$ if there is an automorphism $f\in \aute(\mathfrak{C})$ such that $f(b_F)=c_F$. 
\end{enumerate}
\end{definition}

\begin{remark}\label{rem:type-definabilityt_equility of types}
The equivalence relation $x_F\equiv_{\bs{e}} y_F$ in variables $xy$ is $a$-type-definable, given by the partial type:
    $$\exists z_1z_2w_1w_2(E(a, z_1) \wedge E(a, z_2) \wedge \tp(z_1w_1) = \tp(z_2w_2) \wedge F(w_1, x) \wedge F(w_2, y)).$$ By abusing notation, we also sometimes use notation $\tp_x(b_F/{\bs{e}})=\tp_x(c_F/{\bs{e}})$ instead of $b_F\equiv_{\bs{e}} c_F$.
\end{remark}

Note that for hyperimaginaries $\bs{a}$ and $\bs{b}$ with $\bs{a}\in \dcl(\bs{b})$, $$\aut_{\bs{b}}(\mathfrak{C})\le\aut_{\bs{a}}(\mathfrak{C}).$$ Now we start to recall the model theoretic Galois groups.

\begin{definition}\label{def:Lascar group}$ $
\begin{enumerate}
%    \item $\autfl(\mathfrak{C})$ is a normal subgroup of $\aut(\mathfrak{C})$ generated by
%    \begin{center}
%    $\{f \in \aut(\mathfrak{C}): f \in \aut_M(\mathfrak{C})$ for some $M \models T\},$
%    \end{center}
%    and its elements are called {\em Lascar strong autormophims}.
    \item $\autfl(\mathfrak{C}, \bs{e})$ is a normal subgroup of $\aute(\mathfrak{C})$ generated by
    \begin{center}
    $\{f \in \aute(\mathfrak{C}): f \in \aut_M(\mathfrak{C})$ for some $M \models T \text{ such that } \bs{e} \in \dcl(M)\},$
    \end{center}
    and its elements are called {\em Lascar strong automorphims over $\bs{e}$}.
    \item The quotient group $\gall(T, \bs{e}) = \aute(\mathfrak{C})/\autfl(\mathfrak{C}, \bs{e})$ is called the {\em Lascar group} of $T$ over $\bs{e}$.
\end{enumerate}
\end{definition}

\begin{fact}[{\cite[Section 1]{KL23}}]\label{fact:Lascar group is small}$ $
\begin{enumerate}
    \item $\gall(T, \bs{e})$ does not depend on the choice of a monster model up to isomorphism, so it is legitimate to write $\gall(T, \bs{e})$ instead of $\gall(\mathfrak{C}, \bs{e})$.
    \item $[\aute(\mathfrak{C}):\autfl(\mathfrak{C}, \bs{e})] = |\gall(T, \bs{e})| \le 2^{|T| + |a|}$, which is small.
\end{enumerate}
\end{fact}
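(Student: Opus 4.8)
The plan is to prove part (2) first, in the sharp form that $\gall(T,\bs{e})$ is in bijection with the set of Lascar strong types over $\bs{e}$ of an enumeration of a small model, and then to deduce part (1) by observing that this description refers only to $T$ and $\bs{e}$. Note that the equality $[\aute(\mathfrak{C}):\autfl(\mathfrak{C},\bs{e})]=|\gall(T,\bs{e})|$ is immediate, since by Definition \ref{def:Lascar group} the group $\gall(T,\bs{e})$ is the quotient $\aute(\mathfrak{C})/\autfl(\mathfrak{C},\bs{e})$, whose cardinality is precisely the index; so everything reduces to counting cosets.

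For (2), fix a representative $a$ of $\bs{e}$ and, by downward L\"owenheim--Skolem over $a$, a model $M\models T$ with $a\in M$ (hence $\bs{e}\in\dcl(M)$) and $|M|\le|T|+|a|$; let $m$ enumerate $M$, so $|m|\le|T|+|a|$. Since $\bs{e}\in\dcl(M)$, every $f\in\aut_M(\mathfrak{C})$ lies in $\aute(\mathfrak{C})$ and is a Lascar strong automorphism over $\bs{e}$ by Definition \ref{def:Lascar group}(1); that is, $\aut_M(\mathfrak{C})\le\autfl(\mathfrak{C},\bs{e})$. Consider $\Theta\colon\aute(\mathfrak{C})\to\{\lstp(b/\bs{e}):b\models\tp(m/\bs{e})\}$, $f\mapsto\lstp(f(m)/\bs{e})$, which is surjective by strong homogeneity (every realization of $\tp(m/\bs{e})$ is $f(m)$ for some $f\in\aute(\mathfrak{C})$). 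I claim its fibres are exactly the cosets of $\autfl(\mathfrak{C},\bs{e})$. Using that $\lequiv_{\bs{e}}$ is $\bs{e}$-invariant and is the orbit equivalence relation of $\autfl(\mathfrak{C},\bs{e})$, we get $\Theta(f)=\Theta(g)$ iff $g^{-1}f(m)\lequiv_{\bs{e}}m$; writing $h:=g^{-1}f$, the relation $h(m)\lequiv_{\bs{e}}m$ yields some $k\in\autfl(\mathfrak{C},\bs{e})$ with $k(m)=h(m)$, so $k^{-1}h\in\aut_M(\mathfrak{C})\le\autfl(\mathfrak{C},\bs{e})$ and therefore $h\in\autfl(\mathfrak{C},\bs{e})$; the converse is clear. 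Hence $\Theta$ induces a bijection between $\gall(T,\bs{e})$ and the set of Lascar strong types of $m$ over $\bs{e}$. Finally, since the same type over $M$ forces (via $\aut_M(\mathfrak{C})\le\autfl(\mathfrak{C},\bs{e})$) the same Lascar strong type over $\bs{e}$, the number of these Lascar strong types is at most the number of complete types of $m$ over $M$, which is at most $2^{|T|+|M|+|m|}=2^{|T|+|a|}$; this is small by the choice of monster model.

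For (1), let $\mathfrak{C}\prec\mathfrak{C}'$ be two monster models with $\mathfrak{C}$ small in $\mathfrak{C}'$ (the general case follows by passing to a common, sufficiently saturated monster elementary extension), and keep $M\prec\mathfrak{C}$ as above. Extension and restriction of automorphisms are compatible here: any $f\in\aute(\mathfrak{C})$ extends, by strong homogeneity of $\mathfrak{C}'$, to some $\tilde f\in\aute(\mathfrak{C}')$ with $\tilde f(\mathfrak{C})=\mathfrak{C}$, and any two such extensions differ by an element of $\aut_{\mathfrak{C}}(\mathfrak{C}')\le\aut_M(\mathfrak{C}')\le\autfl(\mathfrak{C}',\bs{e})$. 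Thus $f\mapsto\tilde f\,\autfl(\mathfrak{C}',\bs{e})$ is a well-defined homomorphism $\gall(T,\bs{e})^{\mathfrak{C}}\to\gall(T,\bs{e})^{\mathfrak{C}'}$. To see it is an isomorphism I would combine the bijections from (2) with the absoluteness of $\lequiv_{\bs{e}}$: for $b,c\in\mathfrak{C}$ realizing $\tp(m/\bs{e})$, one has $b\lequiv_{\bs{e}}c$ iff they are joined by a finite chain of $\bs{e}$-indiscernible sequences, and such a configuration transfers in both directions between $\mathfrak{C}$ and $\mathfrak{C}'$ by $\mathfrak{C}\prec\mathfrak{C}'$ and saturation of $\mathfrak{C}$; moreover every Lascar strong type of $m$ over $\bs{e}$ realized in $\mathfrak{C}'$ is already realized in $\mathfrak{C}$, since by saturation of $\mathfrak{C}$ one finds $b\in\mathfrak{C}$ with $\tp(b/M)=\tp(b'/M)$ for any $b'\in\mathfrak{C}'$, and the same type over $M$ forces the same Lascar strong type over $\bs{e}$. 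These two points give bijectivity, which with the homomorphism above yields the isomorphism.

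The main obstacle is part (1), precisely because $\lequiv_{\bs{e}}$ need not be type-definable (unlike the KP strong type), so its absoluteness cannot be obtained by merely quoting compactness for a single partial type; one must control the finite chains of $\bs{e}$-indiscernible sequences witnessing the Lascar strong type and argue that their existence passes between $\mathfrak{C}$ and $\mathfrak{C}'$ in both directions. The recurring technical leverage in both parts is the inclusion $\aut_M(\mathfrak{C})\le\autfl(\mathfrak{C},\bs{e})$ coming from $\bs{e}\in\dcl(M)$, which simultaneously identifies the fibres of $\Theta$ with cosets, produces the cardinality bound, and makes extensions of automorphisms well defined modulo Lascar strong automorphisms.
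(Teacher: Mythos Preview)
The paper does not give its own proof of this statement; it is stated as a Fact with a citation to \cite{KL23}, so there is nothing to compare against directly.

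Your argument is the standard one and is correct. Part (2) is clean: the key identity $\aut_M(\mathfrak{C})\le\autfl(\mathfrak{C},\bs{e})$ gives both the fibre description for $\Theta$ and the cardinality bound via $S_m(M)$. For part (1), the only step you gloss over is that the assignment $f\mapsto[\tilde f]$ actually descends to the quotient $\gall(T,\bs{e})^{\mathfrak{C}}$: you need $f\in\autfl(\mathfrak{C},\bs{e})\Rightarrow\tilde f\in\autfl(\mathfrak{C}',\bs{e})$, which holds because on generators $f\in\aut_N(\mathfrak{C})$ (with $\bs{e}\in\dcl(N)$) any extension $\tilde f$ still fixes $N$ pointwise, and the map $\aute(\mathfrak{C})\to\gall(T,\bs{e})^{\mathfrak{C}'}$ is a homomorphism. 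Your identification of the genuine obstacle---absoluteness of $\lequiv_{\bs{e}}$ in the absence of type-definability---is correct, and the route via finite chains of indiscernibles (equivalently, bounded Lascar distance, as the paper itself invokes later in the proof of Fact~\ref{fact:small tuple enough}) is the right fix: each relation ``$d_{\bs{e}}(x,y)\le n$'' is type-definable over $a$, so transfers between $\mathfrak{C}$ and $\mathfrak{C}'$ by elementarity and saturation.
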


\begin{fact}[{\cite[Lemma 1]{Z02}}]\label{fact:nu is well-defined}
Let $M$ be a small model of $T$ such that $\bs{e} \in \dcl(M)$ and $f, g \in \aute(\mathfrak{C})$.
If $\tp(f(M)/M) = \tp(g(M)/M)$, then $f \cdot \autfl(\mathfrak{C}, \bs{e}) = g \cdot \autfl(\mathfrak{C}, \bs{e})$ as elements in $\gall(T, \bs{e})$.
\end{fact}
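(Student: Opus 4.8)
The plan is to reduce the claim to the single assertion $g^{-1}f\in\autfl(\mathfrak{C},\bs{e})$ and then to realize $g^{-1}f$ as a product of elements manifestly lying in $\autfl(\mathfrak{C},\bs{e})$, using that $\autfl(\mathfrak{C},\bs{e})$ is normal in $\aute(\mathfrak{C})$. Note that $f\cdot\autfl(\mathfrak{C},\bs{e})=g\cdot\autfl(\mathfrak{C},\bs{e})$ in $\gall(T,\bs{e})$ is equivalent to $g^{-1}f\in\autfl(\mathfrak{C},\bs{e})$, so this is what I would aim for.

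First I would unwind the hypothesis. Regarding $M$ as an enumerated tuple, $\tp(f(M)/M)=\tp(g(M)/M)$ says that $f(M)$ and $g(M)$ lie in the same $\aut_M(\mathfrak{C})$-orbit, so by strong homogeneity of $\mathfrak{C}$ there is $h\in\aut_M(\mathfrak{C})$ with $h(f(M))=g(M)$ as enumerated tuples. Since $\bs{e}\in\dcl(M)$, every automorphism fixing $M$ pointwise fixes $\bs{e}$, so $h\in\aute(\mathfrak{C})$; and because $M\models T$ with $\bs{e}\in\dcl(M)$, the element $h$ is one of the generators listed in Definition \ref{def:Lascar group}(1), hence $h\in\autfl(\mathfrak{C},\bs{e})$. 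Then I would put $\sigma:=g^{-1}hf$. It lies in $\aute(\mathfrak{C})$ as a product of elements of $\aute(\mathfrak{C})$ (here one uses the hypothesis $f,g\in\aute(\mathfrak{C})$), and it fixes $M$ pointwise, since $\sigma(M)=g^{-1}(h(f(M)))=g^{-1}(g(M))=M$; by the same reasoning as for $h$, this makes $\sigma$ a generator of $\autfl(\mathfrak{C},\bs{e})$, so $\sigma\in\autfl(\mathfrak{C},\bs{e})$. Finally I would compute $g^{-1}f=\sigma\cdot(f^{-1}h^{-1}f)$; since $\sigma\in\autfl(\mathfrak{C},\bs{e})$, since $h^{-1}\in\autfl(\mathfrak{C},\bs{e})$, and since $\autfl(\mathfrak{C},\bs{e})$ is normal in $\aute(\mathfrak{C})$ with $f\in\aute(\mathfrak{C})$, both factors lie in $\autfl(\mathfrak{C},\bs{e})$, whence $g^{-1}f\in\autfl(\mathfrak{C},\bs{e})$, which is the desired conclusion.

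I do not expect a genuine obstacle here; this is essentially the hyperimaginary version of the classical lemma of Ziegler. The points that need care are: choosing $h$ in the correct direction ($f(M)\mapsto g(M)$, not the reverse); keeping track of the order of composition, since $\aut(\mathfrak{C})$ is noncommutative, which is why the last step passes through normality rather than a naive cancellation; and noticing that the hypothesis ``$M$ is a model'' together with $\bs{e}\in\dcl(M)$ is used precisely to ensure that $h$ and $\sigma=g^{-1}hf$ qualify as generators of $\autfl(\mathfrak{C},\bs{e})$, not merely as elements of $\aute(\mathfrak{C})$. The passage from $\bs{e}=\emptyset$ to a general hyperimaginary $\bs{e}$ costs nothing beyond the observation, already used above, that $\aut_M(\mathfrak{C})\le\aute(\mathfrak{C})$ whenever $\bs{e}\in\dcl(M)$.
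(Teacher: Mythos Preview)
The paper does not prove this statement; it is recorded as a Fact with a citation to \cite[Lemma 1]{Z02} and no argument is given. Your proof is correct and is essentially the standard one: pick $h\in\aut_M(\mathfrak{C})$ with $h(f(M))=g(M)$, observe that both $h$ and $\sigma=g^{-1}hf$ lie in $\autfl(\mathfrak{C},\bs{e})$ as generators, and conclude via normality. There is nothing to compare against in the paper itself.
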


\begin{definition}
Let $M$ be a small model of $T$ such that $\bs{e} \in \dcl(M)$ so that for any tuples $a$ and $b$ in $\mathfrak{C}$, if $a\equiv_M b$, then $a\equiv_{\bs{e}}b$.
\begin{enumerate}
    \item $S_M(M) = \{\tp(f(M)/M): f \in \aute(\mathfrak{C})\}$.
    \item $\nu: S_M(M) \rightarrow \gall(T, \bs{e})$ is defined by $$\nu(\tp(f(M)/M)) := f \cdot \autfl(\mathfrak{C}, \bs{e}),$$ which is well-defined by Fact \ref{fact:nu is well-defined}, and we write $[f]:=f\cdot \autfl(\mathfrak{C}, \bs{e})$.
    \item $\mu: \aute(\mathfrak{C}) \rightarrow S_M(M)$ is defined by $\mu(f) = \tp(f(M)/M)$.
    \item $\pi = \nu \circ \mu: \aute(\mathfrak{C}) \rightarrow \gall(T, \bs{e})$, so that $\pi(f) = [f]$ in $\gall(T, \bs{e})$.
\end{enumerate}
\end{definition}

\begin{remark}[{\cite[Remark 1.9]{KL23}}]\label{remark:type space compact}
Let $S_x(M) = \{p(x): |x| = |M|$ and $ p(x)$ is a complete type over $M\}$ be the compact space of complete types.
Note that even if $\bs{e} \in \dcl(M)$, possibly $a$ is not in $M$, so that $S_M(M)$ is {\em not} $\{p \in S_x(M): \tp(M/\bs{e}) \subseteq p\}$.
But for any small model $M$ such that $\bs{e} \in \dcl(M)$, $S_M(M)$ is a closed (so compact) subspace of $S_x(M)$.
\end{remark}
\begin{proof}
Since $\tp(M/\bs{e})$ is $\bs{e}$-invariant and $\bs{e} \in \dcl(M)$, there is $\Gamma(x, M)$ over $M$, whose solution set is the same as $\tp(M/\bs{e})$.
Then
$$S_M(M) = \{\tp(f(M)/M): f \in \aute(\mathfrak{C})\}= \{p(x) \in S_x(M): \Gamma(x, M) \subseteq p(x)\},$$
which is closed.

% We start with a claim.
% \begin{claim}\label{claim: control of parameters}
% Let $r(x)$ be an $\bs{e}$-invariant type, i.e., for any $f \in \aute(\mathfrak{C})$, $c \models r(x)$ if and only if $f(c) \models r(x)$.
% Then, for any small subset $A$ of $\mathfrak{C}$ such that $\bs{e} \in \dcl(A)$, there is a type $\Gamma(x)$ over $A$, whose solution set is the same as $r(x)$.
% \end{claim}
% \begin{proof}[Proof of the claim]
% Since $\bs{e} = a/E$, $r(x)$ is $a$-invariant.
% Hence we may assume that $r(x)$ is over $a$ and denote $r(x)$ by $r(x, a)$.
% Then
% $$\exists y(r(x, y) \wedge \tp_y(a/A))$$
% has the same solution set as $r(x, a)$.
% \end{proof}
% Let
% $$r(x, a) = \tp(M/\bs{e}) = \exists z_1 z_2(\tp_{z_1 z_2}(Ma) \wedge x = z_1 \wedge E(a, z_2)).$$
% By Claim \ref{claim: control of parameters}, there is $\Gamma(x, M)$ over $M$, whose solution set is the same as $\tp(M/\bs{e})$.
% Then $S_M(M) = \{\tp(f(M)/M): f \in \aute(\mathfrak{C})\}= \{p(x) \in S_x(M): \Gamma(x, M) \subseteq p(x)\}$, which is closed.
% Let
% $$r(x, a) = \tp(M/\bs{e}) = \exists z_1 z_2(\tp_{z_1 z_2}(Ma) \wedge x = z_1 \wedge E(a, z_2))$$
% and $r_0(y, M) = \tp(a/M)$.
% Then
% $$r'(x, M) = \exists y(r(x, y) \wedge r_0(y, M))$$
% has the same solution set as $\tp(M/\bs{e})$.
% Thus $S_M(M) = \{p \in S_x(M): r'(x, M) \subseteq p\}$, which is closed.
\end{proof}

\begin{fact}[{\cite[Proposition 1.10 and Corollary 1.20]{KL23}}]\label{fact:indep of M and top gp}$ $
\begin{enumerate}
    \item We give the quotient topology on $\gall(T, \bs{e})$ induced by $\nu: S_M(M) \rightarrow \gall(T, \bs{e})$.
    The quotient topology does not depend on the choice of $M$.
    \item $\gall(T, \bs{e})$ is a quasi-compact topological group.
\end{enumerate}
\end{fact}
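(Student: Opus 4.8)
I would first reduce to the case $M\preceq N$: given small models $M_1,M_2$ with $\bs{e}\in\dcl(M_i)$, pick a small $N$ with $M_1\cup M_2\subseteq N$ and $M_i\preceq N$ (so $\bs{e}\in\dcl(N)$ too), and it suffices to prove that the quotient topologies induced by $\nu_M$ and by $\nu_N$ on $\gall(T,\bs{e})$ coincide whenever $M\preceq N$. Here the key point is the restriction map $r\colon S_N(N)\to S_M(M)$, $\tp(f(N)/N)\mapsto\tp(f(M)/M)$: it is a continuous surjection (the preimage of the basic clopen cut out by a formula over $M$ is the corresponding basic clopen of $S_N(N)$) from a compact space to a Hausdorff space by Remark~\ref{remark:type space compact}, hence closed, hence a quotient map; and $\nu_N=\nu_M\circ r$ (Fact~\ref{fact:nu is well-defined}). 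Since $\nu_N^{-1}(A)=r^{-1}\bigl(\nu_M^{-1}(A)\bigr)$ for every $A\subseteq\gall(T,\bs{e})$, and $r$ is a quotient map, $A$ is $\nu_N$-open iff it is $\nu_M$-open.

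\textbf{Part (2), the easy parts.} Quasi-compactness is immediate: $\gall(T,\bs{e})=\nu_M(S_M(M))$ is a continuous image of a compact space. For the topological group structure, I would begin from the observation that $\aut_M(\mathfrak{C})\le\autfl(\mathfrak{C},\bs{e})$ whenever $M\models T$ and $\bs{e}\in\dcl(M)$ (such an $M$ witnesses the generators of $\autfl(\mathfrak{C},\bs{e})$). Using this and the normality of $\autfl(\mathfrak{C},\bs{e})$ in $\aute(\mathfrak{C})$, one checks that
\[
\hat{m}\colon S_M(M)\times S_M(M)\to\gall(T,\bs{e}),\qquad (\tp(f(M)/M),\tp(g(M)/M))\longmapsto[fg]
\]
is well defined: if $\tp(f'(M)/M)=\tp(f(M)/M)$ and $\tp(g'(M)/M)=\tp(g(M)/M)$, then $f'=\sigma f\tau$ and $g'=\rho g\pi$ with $\sigma,\tau,\rho,\pi\in\aut_M(\mathfrak{C})$, so $[f'g']=[\sigma][f][\tau][\rho][g][\pi]=[f][g]=[fg]$. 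Plainly $\hat{m}=m\circ(\nu_M\times\nu_M)$ for the multiplication $m$ of $\gall(T,\bs{e})$, and likewise the map $\tilde{\iota}\colon S_M(M)\to S_M(M)$, $\tp(f(M)/M)\mapsto\tp(f^{-1}(M)/M)$, is well defined and satisfies $\nu_M\circ\tilde{\iota}=\iota\circ\nu_M$ for the inversion $\iota$. It remains to show $m$ and $\iota$ are continuous.

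\textbf{Inversion, and translations.} Continuity of $\iota$ reduces to continuity of $\tilde{\iota}$: take a formula $\varphi(x)$ over $M$, with associated basic clopen $[\varphi]=\{p\in S_x(M):\varphi\in p\}$, and write $\varphi$ as $\psi(x|_J;\bar{m}|_K)$ with $J,K$ finite (where $x|_J,\bar{m}|_K$ are subtuples); then $\tilde{\iota}(\tp(f(M)/M))\in[\varphi]$ iff $\models\psi(f^{-1}(\bar{m}|_J);\bar{m}|_K)$ iff --- applying the automorphism $f$ --- $\models\psi(\bar{m}|_J;f(\bar{m}|_K))$ iff $\tp(f(M)/M)\in[\psi(\bar{m}|_J;x|_K)]$; thus $\tilde{\iota}^{-1}$ carries basic clopens to basic clopens, whence $\tilde{\iota}$ and then $\iota$ are continuous ($\nu_M^{-1}(\iota^{-1}(U))=\tilde{\iota}^{-1}(\nu_M^{-1}(U))$). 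For the translations, fix $[h]\in\gall(T,\bs{e})$ and pass, by Part~(1), to a small $h$-invariant model $N\supseteq M$ with $\bs{e}\in\dcl(N)$ (built by closing $M$ off alternately under $h^{\pm1}$ and elementary submodels). Since $h(N)=N$, conjugation by $h$ fixes $\aut_N(\mathfrak{C})$ setwise, so $\tp(f(N)/N)\mapsto\tp(hf(N)/N)$ and $\tp(f(N)/N)\mapsto\tp(fh(N)/N)$ are well-defined self-maps of $S_N(N)$, and the same manipulation as for $\tilde{\iota}$ --- now transporting the relevant finite parameter tuple by $h^{-1}$ resp.\ $h$, which keeps it inside $N$ because $h(N)=N$ --- shows these are continuous. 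Hence the translations $L_{[h]},R_{[h]}$, and therefore the conjugations $x\mapsto[h]x[h]^{-1}$, are homeomorphisms of $\gall(T,\bs{e})$; joint continuity of $m$ then reduces to continuity at $(1,1)$ via $m(L_{[f_0]}x,L_{[g_0]}y)=L_{[f_0g_0]}\bigl(m([g_0]^{-1}x[g_0],y)\bigr)$.

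\textbf{The main obstacle.} Continuity of $m$ at $(1,1)$ --- exhibiting, for each open $W\ni 1$, an open $V\ni 1$ with $VV\subseteq W$ --- is where I expect essentially all the difficulty to be. The ``apply an automorphism'' device breaks down here, because a product $fg$ carries two inverses that cannot both be pushed across a single formula; and $\nu_M$ is only a quotient map (with no openness to spare), so even describing a basic neighbourhood of $1$ inside $\nu_M(V)$ is delicate. The way through should be to exploit $\nu_M^{-1}(1)=\{\tp(\sigma(M)/M):\sigma\in\autfl(\mathfrak{C},\bs{e})\}$ together with the generation of $\autfl(\mathfrak{C},\bs{e})$ by pointwise stabilizers of small models having $\bs{e}$ in their definable closure --- i.e.\ an argument by Lascar distance: every $\nu_M$-saturated open $V$ contains $\nu_M^{-1}(1)$, and a ``square root'' of $W$ is extracted by composing two bounded-distance pieces and using quasi-compactness to bound the Lascar distances that appear. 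All the other points are bookkeeping; moving from $\emptyset$ to an arbitrary hyperimaginary $\bs{e}$ costs only the usual precautions --- work with models $M$ satisfying $\bs{e}\in\dcl(M)$, use Remark~\ref{remark:type space compact} to keep $S_M(M)$ compact, and use $\aute(\mathfrak{C})$ in place of $\aut(\mathfrak{C})$ throughout.
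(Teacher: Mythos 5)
First, a point of comparison: the paper does not actually prove this statement --- it is imported as a Fact from \cite[Proposition 1.10 and Corollary 1.20]{KL23} --- so there is no in-paper proof to match; the closest relative here is Remark \ref{relativized Lascar is top gp}, whose restriction-map argument is precisely your Part (1). Your Part (1) is correct: the reduction to $M\preceq N$ via a common small elementary submodel of $\mathfrak{C}$, together with the observation that a continuous surjection between compact Hausdorff spaces (using Remark \ref{remark:type space compact}) is closed and hence a quotient map, is the standard argument. Likewise, quasi-compactness, the well-definedness computations for the lifted multiplication and inversion (via $\aut_M(\mathfrak{C})\le\autfl(\mathfrak{C},\bs{e})$ and normality), the continuity of inversion, and the construction of a small $h$-invariant model to make the translations homeomorphisms are all sound.

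There is, however, a genuine gap at exactly the point you flag: joint continuity of multiplication at $(1,1)$ is not proved, only described as something to be ``extracted by composing two bounded-distance pieces,'' and I do not see how to complete that sketch. An open $W\ni 1$ corresponds only to a $\nu$-saturated open subset of $S_M(M)$ containing the fiber $\nu^{-1}(1)=\{\tp(\sigma(M)/M):\sigma\in\autfl(\mathfrak{C},\bs{e})\}$; this fiber is in general \emph{not} closed (when Lascar and KP strong types differ, $\{1\}$ is not closed in $\gall(T,\bs{e})$), so no compactness is available on it to ``bound the Lascar distances that appear,'' and the sets of types at Lascar distance $\le n$ from $\tp(M/M)$ are closed, not open, so they do not furnish the neighbourhood $V$ you need. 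The proofs in the literature (\cite{Z02}, \cite{CLPZ01}, and over a hyperimaginary \cite{KL23}) do not argue by distance at the identity: the key device is the auxiliary compact type space of types $\tp(f(M),fg(M)/M)$ with its three continuous restriction maps to $S_M(M)$ (recording $\tp(f(M)/M)$, $\tp(g(M)/M)$, and $\tp(fg(M)/M)$), using that continuous surjections between compact Hausdorff type spaces are closed to transfer closedness of $\nu^{-1}[A]$ to closedness of the relevant preimage of $m^{-1}[A]$. Until the $(1,1)$ step is actually carried out, Part (2) beyond quasi-compactness is not established.
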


Now we introduce the notation for two canonical normal subgroups of the Lascar group and recall the KP and Shelah group.

\begin{definition}
Let $\gall^c(T, \bs{e})$ be the topological closure of the trivial subgroup of $\gall(T, \bs{e})$, and $\gall^0(T, \bs{e})$ be the connected component containing the identity in $\gall(T, \bs{e})$. Put $\autfkp(\mathfrak{C}, \bs{e}) = \pi^{-1}[\gall^c(T, \bs{e})]$ and $\autfsh(\mathfrak{C}, \bs{e}) = \pi^{-1}[\gall^0(T, \bs{e})]$.
\begin{enumerate}
    \item The {\em KP(-Galois) group of $T$} over $\bs{e}$ (where KP is an abbreviation for Kim-Pillay) is
    $$\galkp(T, \bs{e}) := \aute(\mathfrak{C}) / \autfkp(\mathfrak{C}, \bs{e}).$$
    \item The {\em Shelah(-Galois) group of $T$} over $\bs{e}$ is $$\galsh(T, \bs{e}) := \aute(\mathfrak{C}) / \autfsh(\mathfrak{C}, \bs{e}).$$
\end{enumerate}
\end{definition}
\noindent Note that we have
$$\galkp(T, \bs{e})\cong \gall(T, \bs{e}) / \gal_{\LL}^c(T, \bs{e}),\ \galsh(T, \bs{e})\cong \gall(T, \bs{e}) / \gal_{\LL}^0(T, \bs{e}).$$

The strong types are defined in terms of equivalence relations satisfying particular conditions, but because we are considering a Lascar group over a hyperimaginary $\bs{e}$, it is beneficial to define them as the orbit equivalence relations and then characterize them.

\begin{definition}
Given hyperimaginaries $b_F, c_F$, they are said to have the same
\begin{enumerate}
    \item {\em Lascar strong type} over $\bs{e}$ if there is $f \in \autfl(\mathfrak{C}, \bs{e})$ such that $f(b_F) = c_F$, and it is denoted by $b_F \lequiv_{\bs{e}} c_F$,
    \item {\em KP strong type} over $\bs{e}$ if there is $f \in \autfkp(\mathfrak{C}, \bs{e})$ such that $f(b_F) = c_F$, and it is denoted by $b_F \kpequiv_{\bs{e}} c_F$, and
    \item {\em Shelah strong type} over $\bs{e}$ if there is $f \in \autfsh(\mathfrak{C}, \bs{e})$ such that $f(b_F) = c_F$, and it is denoted by $b_F \sequiv_{\bs{e}} c_F$.
\end{enumerate}
\end{definition}

\begin{fact}[{\cite[Remark 1.4(2), Proposition 3.5, 3.6, and 3.12]{KL23}}]\label{fact:characterization of equivalence relations}
Let $b_F$ and $c_F$ be hyperimaginaries.
\begin{enumerate}
    \item $b_F \lequiv_{\bs{e}} c_F$ if and only if for any $\bs{e}$-invariant bounded equivalence relation $E'$ which is coarser than $F$, $E'(b, c)$ holds; $x_F \lequiv_{\bs{e}} y_F$ is the finest such equivalence relation.
    \item The following are equivalent.
    \begin{enumerate}
        \item $b_F \kpequiv_{\bs{e}} c_F$.
        \item $b_F \equiv_{\bdd(\bs{e})} c_F$.
        \item For any $\bs{e}$-invariant type-definable bounded equivalence relation $E'$ which is coarser $F$, $E'(b, c)$ holds ($x_F \kpequiv_{\bs{e}} y_F$ is the finest such equivalence relation).
    % Furthermore, $a \kpequiv b$ if and only if $a \equiv_{\bdd(\emptyset)} b$.
    \end{enumerate}
    \item The following are equivalent.
    \begin{enumerate}
        \item $b_F \sequiv_{\bs{e}} c_F$.
        \item $b_F \equiv_{\acl(\bs{e})} c_F$.
        \item For any $\bs{e}$-invariant type-definable equivalence relation $E'$ coarser than $F$, if $b_{E'}$ has finitely many conjugates over $\bs{e}$, then $E'(b, c)$ holds.
        % \item For any $\bs{e}$-invariant type-definable equivalence relation $E'$ coarser than $F$, if $b_{E'}$ is algebraic over $\bs{e}$, then $E'(b, c)$ holds.
    \end{enumerate}
    % \item For any set (or its enumeration) of imaginaries $A$, $\acl(A)$ is interdefinable with its subset $\acl^{\operatorname{eq}}(A)$, which collects only imaginaries algebraic over $A$.
\end{enumerate}
If $F$ is just =, so that $b_F$ and $c_F$ are just real tuples, then we can omit ``coarser than $F$''.
\end{fact}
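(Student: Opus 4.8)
The three items share one template: each of $\lequiv_{\bs{e}}$, $\kpequiv_{\bs{e}}$, $\sequiv_{\bs{e}}$ is the orbit equivalence relation, on the $F$-sort, of one of the nested canonical subgroups $\autfl(\mathfrak{C},\bs{e}) \subseteq \autfkp(\mathfrak{C},\bs{e}) \subseteq \autfsh(\mathfrak{C},\bs{e})$ of $\aute(\mathfrak{C})$ (the inclusions coming from $\{1\}\subseteq\gall^c(T,\bs{e})\subseteq\gall^0(T,\bs{e})$). The plan, in each item, is to identify this orbit relation with the \emph{finest} member of a prescribed class of $\bs{e}$-invariant equivalence relations coarser than $F$, and then with a closure relation ($\bdd$, $\acl$). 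In every case one direction is soft --- the orbit relation is itself a member of the class, so it is refined by the ``finest'' and the ``$\Leftarrow$'' direction is free --- while the other direction, that the orbit relation refines \emph{every} member of the class, carries the content.

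For (1), first note that $x_F\lequiv_{\bs{e}} y_F$ is bounded (it has at most $|\gall(T,\bs{e})|$ classes, which is small by Fact \ref{fact:Lascar group is small}), is $\bs{e}$-invariant since $\autfl(\mathfrak{C},\bs{e})$ is normal in $\aute(\mathfrak{C})$, and is trivially coarser than $F$; so it is a competing $E'$, which yields ``$\Leftarrow$'' and reduces the claim to showing it is the finest. For this the core lemma is: if $E'$ is bounded and $\bs{e}$-invariant and $a\equiv_M a'$ for a model $M$ with $\bs{e}\in\dcl(M)$, then $E'(a,a')$. I would prove it by extending $a,a'$ to an $M$-indiscernible sequence of length exceeding the number of $E'$-classes; pigeonhole produces one $E'$-equivalent pair, and $M$-indiscernibility together with $\bs{e}$-invariance (every $M$-automorphism lies in $\aute(\mathfrak{C})$, as $\bs{e}\in\dcl(M)$) propagates this to $(a,a')$. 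Since $\autfl(\mathfrak{C},\bs{e})$ is generated by automorphisms fixing such models pointwise, applying the lemma factor by factor shows $\lequiv_{\bs{e}}$ refines every such $E'$.

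For (2), the equivalence (b)$\Leftrightarrow$(c) is elementary: $\equiv_{\bdd(\bs{e})}$ is type-definable (Remark \ref{rem:type-definabilityt_equility of types}, using that $\bdd(\bs{e})$ is interdefinable with a single hyperimaginary by Definition/Remark \ref{def/rem: dcl acl bdd}), bounded, $\bs{e}$-invariant, and coarser than $F$, hence one of the $E'$; conversely any type-definable bounded $\bs{e}$-invariant $E'$ coarser than $F$ has $b_{E'}\in\bdd(\bs{e})$, so an $f$ fixing $\bdd(\bs{e})$ with $f(b_F)=c_F$ fixes $b_{E'}$ and forces $E'(b,c)$ --- thus $\equiv_{\bdd(\bs{e})}$ is exactly the finest such relation. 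The link between (a) and (b)--(c) is the topological step, resting on the correspondence --- via the quotient map $\nu\colon S_M(M)\to\gall(T,\bs{e})$ (Fact \ref{fact:indep of M and top gp}) and the compactness of $S_M(M)$ (Remark \ref{remark:type space compact}) --- between closed subsets of $\gall(T,\bs{e})$ and $M$-type-definable subsets of $S_M(M)$. On one side, for such an $E'$ the subgroup $\stab(b_{E'})/\autfl(\mathfrak{C},\bs{e})$ pulls back to an $M$-type-definable, hence closed, subset (using type-definability of $E'$ and of $\equiv_{\bs{e}}$, Remark \ref{rem:type-definabilityt_equility of types}); being a closed subgroup containing the identity it contains $\gall^c(T,\bs{e})=\overline{\{1\}}$, so $\autfkp(\mathfrak{C},\bs{e})=\pi^{-1}[\gall^c(T,\bs{e})]\subseteq\stab(b_{E'})$ and $\kpequiv_{\bs{e}}$ refines $E'$; applied to $E'=\equiv_{\bdd(\bs{e})}$ this is (a)$\Rightarrow$(b). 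On the other side, since $\gall^c(T,\bs{e})$ is closed, the orbit relation $\kpequiv_{\bs{e}}$ of $\pi^{-1}[\gall^c(T,\bs{e})]$ is itself type-definable, hence one of the $E'$, so (b)$\Rightarrow$(c) applied to it gives that $\equiv_{\bdd(\bs{e})}$ refines $\kpequiv_{\bs{e}}$, which is (b)$\Rightarrow$(a).

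Item (3) runs in parallel with $\gall^0(T,\bs{e})$, $\acl(\bs{e})$, and ``finitely many conjugates over $\bs{e}$'' in place of $\gall^c(T,\bs{e})$, $\bdd(\bs{e})$, and ``bounded'': the forward directions use that a class $b_{E'}$ with finitely many $\bs{e}$-conjugates lies in $\acl(\bs{e})$; the reverse directions use that $\acl(\bs{e})$ is interdefinable with a single hyperimaginary and that a failure of $b\equiv_{\acl(\bs{e})}c$ is witnessed by a finite-orbit type-definable $\bs{e}$-invariant equivalence relation coarser than $F$; and the identification of $\sequiv_{\bs{e}}$ with the finest such uses that $\galsh(T,\bs{e})=\gall(T,\bs{e})/\gall^0(T,\bs{e})$ is the maximal totally disconnected (hence, by quasi-compactness, profinite) quotient, whose finite-index closed subgroups correspond to finite $\bs{e}$-invariant type-definable equivalence relations, matching $\pi^{-1}[\gall^0(T,\bs{e})]$ against their intersection $\equiv_{\acl(\bs{e})}$. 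I expect the main obstacle throughout to be the closedness-of-stabilizer step underlying the ``(a)'' equivalences: translating the group-topological data $\gall^c(T,\bs{e})$ and $\gall^0(T,\bs{e})$ into type-definability and finiteness of the orbit relations, which is exactly where compactness of $S_M(M)$ and the quotient topology on $\gall(T,\bs{e})$ must be combined with the type-definability of $\equiv_{\bs{e}}$ over a hyperimaginary --- the subtlety being that $M$ only carries $\bs{e}\in\dcl(M)$ and that the relevant classes are themselves hyperimaginaries. Finally, the closing remark for $F={=}$ is immediate, since then ``coarser than $F$'' is a vacuous restriction.
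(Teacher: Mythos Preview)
The paper does not prove this statement: it is stated as a \emph{Fact} with a citation to \cite[Remark 1.4(2), Proposition 3.5, 3.6, and 3.12]{KL23} and no proof is given in the present paper. There is therefore no ``paper's own proof'' to compare your proposal against.

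That said, your sketch is a faithful outline of the standard argument (and, as far as one can tell from the citation, of what \cite{KL23} does). One small point to flag: in your (b)$\Leftrightarrow$(c) argument for item (2) you write ``$b_{E'}\in\bdd(\bs{e})$'', but $E'$ is only assumed $\bs{e}$-invariant and type-definable, not $\emptyset$-type-definable, so $b_{E'}$ is not literally a hyperimaginary; you need the replacement trick of Fact~\ref{fact:e-hi is empty hi} (applied with $b=a$) to pass to an interdefinable genuine hyperimaginary before invoking $\bdd(\bs{e})$. The same caveat applies in item (3) with $\acl(\bs{e})$. Otherwise the strategy --- bounded/invariant plus indiscernibles for (1), closed stabilizers via the quotient topology for (2), and the profinite quotient for (3) --- is correct.
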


The following observation is necessary when we consider an equivalence class of a type-definable equivalence relation over certain parameters as a hyperimaginary.

\begin{Fact}\label{fact:e-hi is empty hi}
Let $F$ be an $\bs{e}$-invariant type-definable equivalence relation on $\mathfrak{C}^{\alpha}$ and assume that $\bs{e} \in \dcl(b)$ for some tuple $b$ of elements in $\mathfrak{C}$ (note that we always have at least one such $b$ by letting $b = a$).
Then there is an $\emptyset$-type-definable equivalence relation $F'$ such that for any $c \in \mathfrak{C}^\alpha$, $c_F$ and $(cb)_{F'}$ are interdefinable over $\bs e$ so that we can `replace' an equivalence class of an $\bs{e}$-invariant type-definable equivalence relation with a hyperimaginary.
\end{Fact}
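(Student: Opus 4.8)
The plan is to build $F'$ directly from $F$ and the type $\tp(b)$, exploiting the fact that $\bs{e}\in\dcl(b)$ means the equivalence relation $E$ defining $\bs{e}$ is, up to the choice of representative $b$, controlled by the type of $b$. Since $\bs{e}=a_E$ and $\bs{e}\in\dcl(b)$, the relation "$x\equiv_{\bs{e}} y$" restricted to realizations of $\tp(b)$ is captured as follows: for $b',b''\models\tp(b)$ we have $b'_{\bar b}:=\text{(the }\bs{e}\text{ computed from }b')$ equals the one computed from $b''$ iff there is an automorphism sending $b'\mapsto b''$ fixing $\bs{e}$, and by $\dcl$ this is a $\emptyset$-type-definable condition on the pair $(b',b'')$. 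So first I would write down, using Remark \ref{rem:type-definabilityt_equility of types} and the assumption $\bs{e}\in\dcl(b)$, an $\emptyset$-type-definable equivalence relation $G(x,x')$ on $\mathfrak{C}^{|b|}$ whose classes among realizations of $\tp(b)$ are exactly "$b'$ and $b''$ give the same $\bs{e}$" (and which, off $\tp(b)$, can be defined however is convenient — e.g. equality, or a single extra class).

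Next I would define $F'$ on $\mathfrak{C}^{\alpha+|b|}$ by declaring $F'((c',b'),(c'',b''))$ to hold iff $b'$ and $b''$ realize $\tp(b)$, $G(b',b'')$ holds (so they name the same $\bs{e}$), and $F(c',c'')$ holds — but with the crucial correction that $F$ must be "transported" along the automorphism witnessing $G(b',b'')$, since $F$ is only $\bs{e}$-invariant, not $\emptyset$-invariant. Concretely: $F'((c',b'),(c'',b''))$ iff there is $f\in\aut(\mathfrak{C})$ with $f(b')=b''$ and $F(f(c'),c'')$. One checks this is $\emptyset$-type-definable by the usual compactness/type-amalgamation trick (quantify existentially over a pair $z_1w_1$, $z_2w_2$ with $\tp(z_1w_1)=\tp(z_2w_2)$, matching $b'$ to $z_1$, $b''$ to $z_2$, and imposing $F$ between the $w$'s), exactly in the style of the formula displayed in Remark \ref{rem:type-definabilityt_equility of types}; that it is an equivalence relation on its "good" part follows from $F$ being an $\bs{e}$-invariant equivalence relation, and one extends it to all of $\mathfrak{C}^{\alpha+|b|}$ trivially (all remaining tuples in one class, or $F'$ refined by raw equality outside — either works).

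Finally I would verify the interdefinability claim: fix $c\in\mathfrak{C}^\alpha$ and a representative $b$ with $\bs{e}\in\dcl(b)$, and show that for $f\in\aute(\mathfrak{C})$, $f(c_F)=c_F$ iff $f((cb)_{F'})=(cb)_{F'}$. For the forward direction, if $f\in\aute(\mathfrak{C})$ fixes $c_F$, then $F(f(c),c)$ holds, and since $f(\bs{e})=\bs{e}$ while $\bs{e}\in\dcl(b)$ and $\bs{e}\in\dcl(f(b))$, the pair $(f(b),b)$ lies in the good part and satisfies $G$; taking $f$ itself (or rather, noting $F(f(c),c)$ and $f(b)=f(b)$) as the witnessing automorphism gives $F'((f(c),f(b)),(c,b))$, i.e. $f$ fixes $(cb)_{F'}$. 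Conversely, if $f\in\aute(\mathfrak{C})$ fixes $(cb)_{F'}$, unwinding the definition of $F'$ produces $g\in\aut(\mathfrak{C})$ with $g(f(b))=b$ and $F(g(f(c)),c)$; then $g f$ sends $b\mapsto b$, hence fixes $\bs{e}$ (as $\bs{e}\in\dcl(b)$), and since $f$ fixes $\bs{e}$ so does $g$, and $g$'s action on $\bs{e}$ being trivial means $F(g(f(c)),c)$ transports back to $F(f(c),c)$ by $\bs{e}$-invariance of $F$ — so $f$ fixes $c_F$.

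\textbf{Main obstacle.} The delicate point is the transport of $F$ along automorphisms: because $F$ is only assumed $\bs{e}$-invariant, "$c_F$" does not make literal sense as a $\emptyset$-hyperimaginary, and one must be careful that the definition of $F'$ simultaneously (i) is genuinely $\emptyset$-type-definable — which needs the type-amalgamation rewriting as in Remark \ref{rem:type-definabilityt_equility of types} rather than a naive formula mentioning $\bs{e}$ — and (ii) is genuinely an equivalence relation, where transitivity uses that composing the two witnessing automorphisms stays inside $\aut(\mathfrak{C})$ and that $G$ already records agreement of the named $\bs{e}$'s. I expect checking (i) and (ii) together, and then the two directions of interdefinability with the $\dcl(b)$-bookkeeping, to be the bulk of the honest work; everything else is formula manipulation.
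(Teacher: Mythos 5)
Your overall strategy---augment $c$ by the tuple $b$ naming $\bs{e}$, record via an $\emptyset$-type-definable relation $G$ (the paper's $E^*$) that the two $b$-coordinates name the same hyperimaginary, and then build $F'$ on the pairs $cb$---is the same as the paper's. But your concrete definition of $F'$ is wrong, and the error is fatal rather than cosmetic. You set $F'((c',b'),(c'',b''))$ iff there is $f\in\aut(\mathfrak{C})$ with $f(b')=b''$ and $F(f(c'),c'')$ (plus the $G$ and $\tp(b)$ side conditions). Now take any $f\in\aute(\mathfrak{C})$ and consider the pair $((c,b),(f(c),f(b)))$: the automorphism $f$ itself witnesses your condition, since $f(b)=f(b)$ and $F(f(c),f(c))$ holds by reflexivity of $F$. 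Hence every $\aute(\mathfrak{C})$-conjugate of $(c,b)$ is $F'$-equivalent to $(c,b)$, so every $f\in\aute(\mathfrak{C})$ fixes $(cb)_{F'}$, and $(cb)_{F'}$ cannot be interdefinable over $\bs{e}$ with $c_F$ unless $c_F\in\dcl(\bs{e})$. The same collapse undermines your verification: the ``forward direction'' never actually uses $F(f(c),c)$, and in the converse direction the existential only hands you \emph{some} $g$ with $g(f(b))=b$ and $F(gf(c),c)$---and $g=f^{-1}$ always works by reflexivity---so you cannot recover $F(f(c),c)$. (Even for a nontrivial witness, applying $\bs{e}$-invariance of $F$ to $F(gf(c),c)$ with $gf\in\aut_b(\mathfrak{C})\le\aute(\mathfrak{C})$ yields $F(c,(gf)^{-1}(c))$, not $F(f(c),c)$.)

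The repair is the paper's construction, which avoids the existential over automorphisms entirely. Since $F$ is $\bs{e}$-invariant and $\bs{e}\in\dcl(b)$, $F$ is $b$-invariant and hence type-definable over $b$, say by a partial type $F(x,y;b)$; one then substitutes a variable $z$ for the parameter $b$ and puts $F'(xz,yw):=(F(x,y;z)\wedge E^*(z,w)\wedge p(z)\wedge p(w))\vee xz=yw$, where $p=\tp(b)$. The point is that the first $b$-coordinate is used as the \emph{parameter} of the type defining $F$, not as the source of a transporting automorphism; for $f\in\aute(\mathfrak{C})$ the condition $F'(cb,f(c)f(b))$ then reduces exactly to $F(c,f(c);b)\vee cb=f(cb)$, i.e.\ to $F(c,f(c))$, which is precisely what interdefinability over $\bs{e}$ requires. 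Note also that your claim that your $F'$ is $\emptyset$-type-definable by ``imposing $F$ between the $w$'s'' is not yet meaningful, since $F$ is a priori only type-definable over parameters; the observation that $F$ admits a $b$-type-definition whose parameter can be promoted to a variable is exactly the step that makes the construction go through.
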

\begin{proof}
Since $\bs{e} \in \dcl(b)$, $\bs{e} = E(\mathfrak{C}, a)$ is $b$-invariant, so there is a $b$-type-definable equivalence relation $E^*$ such that $b/E^* = E^*(\mathfrak{C}, b) = E(\mathfrak{C}, a) = \bs{e}$.
Furthermore, because $F$ is $\bs{e}$-invariant and $\bs{e} \in \dcl(b)$, $F$ is type-definable over $b$, say by $F(x,y;b)$.
Then for $p(x) = \tp(b)$, put
$$F'(xz,yw) := (F(x,y;z) \wedge E^*(z, w) \wedge p(z) \wedge p(w)) \vee xz=yw.$$
It is not difficult to verify that  $F'$ is indeed an equivalence relation; we claim that $F'$ is the desired one.
For any $f \in \aute(\mathfrak{C})$, $\models F'(cb, f(cb))$ if and only if
$$\models (F(c, f(c); b) \wedge E^*(b, f(b)) \wedge p(b) \wedge p(f(b))) \vee cb = f(cb),$$
which is just equivalent to $\models F(c, f(c);b) \vee cb = f(cb)$.
Now it is immediate that $f \in \aut_{c_F \bs{e}}(\mathfrak{C})$ if and only if $f \in \aut_{(cb)_{F'} \bs{e}}(\mathfrak{C})$.
% If $f \in \aut_{c_F \bs{e}}(\mathfrak{C})$, then $\models F(c, f(c);b)$.
\end{proof}

\subsection{Relativized model theoretic Galois groups}\label{sec:pre relativized}
From now on, we fix an $\bs{e}$-invariant partial type $\Sigma(x)$ where $x$ is a possibly infinite tuple of variables.
That is, for any $f \in \aute(\mathfrak{C})$, $b \models \Sigma(x)$ if and only if $f(b) \models \Sigma(x)$. We write $\Sigma(\mathfrak{C})$ for the set of tuples $b$ of elements in $\mathfrak{C}$ such that $b \models \Sigma(x)$.

\begin{definition}[Relativization of automorphism groups to $\Sigma$]\label{def:relativized Galois groups}
Let $X \in \{\LL, \KP, \SH\}$.
\begin{enumerate}
    \item  $\aute(\Sigma) = \aute(\Sigma(\mathfrak{C})) = \{f \upharpoonright \Sigma(\mathfrak{C}): f \in \aute(\mathfrak{C})\}$.
    \item For a cardinal $\lambda$, $\autf^{\lambda}_X(\Sigma, \bs{e}) = $
    $$\{\sigma \in \aute(\Sigma): \text{ for any of tuple } b = (b_i)_{i<\lambda}\text{ where each }b_i \models \Sigma(x_i)\text{, }b \equiv^X_{\bs{e}} \sigma(b)\}.$$
    \item $\autf_X(\Sigma, \bs{e}) =$
    \begin{align*}
    \{\sigma \in \aute(\Sigma): & \text{ for any cardinal } \lambda \text{ and for any tuple } b = (b_i)_{i<\lambda}\\
    & \text{ with each }b_i \models \Sigma(x_i)\text{, }b \equiv^X_{\bs{e}} \sigma(b)\}.
    \end{align*}
    % \item $\autfkp(\Sigma)= \{\sigma \in \aut(\Sigma):$ for any ordinal $ \lambda$ and for any $b=(b_i)_{i<\lambda}\subseteq \Sigma(\mathfrak{C}),\left(b\equiv^{\KP} \sigma(b)\right)\}$.
    % \item $\autfsh(\Sigma)= \{\sigma \in \aut(\Sigma):$ for any ordinal $ \lambda$ and for any $b=(b_i)_{i<\lambda}\subseteq \Sigma(\mathfrak{C}),\left(b\equiv^{\SH} \sigma(b)\right)\}$.
\end{enumerate}
\end{definition}

%\begin{remark}
%Recall that $\bs{e} = a_E$ where $E$ is an $\emptyset$-type-definable equivalence relation on $\mathfrak{C}^{|a|}$.
%There's no guarantee that the length of $a$ is the same as or a multiple of the length of $x$ where $\Sigma(x)$ is the fixed partial type in $x$, so the `restriction of $\bs{e}$ in $\Sigma(\mathfrak{C})$' is very unnatural.
%If $\bs{e}$ is interdefinable with a tuple $b$ of elements in $\mathfrak{C}$, then we may also try to restrict the parameters and adopt the definition $\aute(\Sigma) = \{f \upharpoonright \Sigma(\mathfrak{C}): f \in \aut_{b \cap \Sigma(\mathfrak{C})}(\mathfrak{C})\}$ and develop the materials.
%
%But anyway we can define $\aute(\Sigma)$ by restricting the domains of automorphisms in $\aute(\mathfrak{C})$, and so according to Definition \ref{def:relativized Galois groups}, if there is $\sigma \in \aute(\Sigma)$ such that $\sigma(b) = c$ where $b$ and $c$ are tuples of realizations of $\Sigma(\mathfrak{C})$, then $b \equiv_{\bs{e}} c$.
%\end{remark}

\begin{remark}
For $X \in \{\LL, \KP, \SH\}$, it is easy to check that $\autf^{\lambda}_X(\Sigma, \bs{e})$ and $\autf_X(\Sigma, \bs{e})$ are normal subgroups of $\aute(\Sigma)$.
\end{remark}

\begin{definition}$ $
\begin{enumerate}
    \item For $X \in \{\LL, \KP, \SH\}$, for any cardinal $\lambda$, $\gal_X^{\lambda}(\Sigma, \bs{e}) = \aute(\Sigma)/\autf_X^{\lambda}(\Sigma, \bs{e})$.
    \item $\gall(\Sigma, \bs{e}) = \aute(\Sigma)/\autfl(\Sigma, \bs{e})$ is the {\em Lascar(-Galois) group over $\bs{e}$ relativized to $\Sigma$}.
    \item $\galkp(\Sigma, \bs{e}) = \aute(\Sigma)/\autfkp(\Sigma, \bs{e})$ is the {\em KP(-Galois) group over $\bs{e}$ relativized to $\Sigma$}.
    \item $\galsh(\Sigma, \bs{e}) = \aute(\Sigma)/\autfsh(\Sigma, \bs{e})$ is the {\em Shelah(-Galois) group over $\bs{e}$ relativized to $\Sigma$}.
\end{enumerate}
\end{definition}

\begin{remark}\label{remark:original Galois group is finer}$ $
\begin{enumerate}
    \item For $X \in \{\LL, \KP, \SH\}$, if $[f] = [\id]$ in $\gal_X(T, \bs{e})$, then $[f] = [\id]$ in $\gal_X(\Sigma, \bs{e})$.
    \item In general, $\gal_{\LL}^1(\Sigma) \ne \gal_{\LL}^2(\Sigma)$ (cf. \cite[Example 2.3]{DKKL21}).
\end{enumerate}
\end{remark}

% We aim to find a proper topology $\mathfrak{t}$ on $\gall(\Sigma)$ such that
% \begin{itemize}
% 	\item it is induced from a `Logic Topology', and
% 	\item $\overline{\{\id\}}^{\mathfrak{t}}=\autfkp(\Sigma)/\autfl(\Sigma)$.
% \end{itemize}

\begin{fact}\label{fact:small tuple enough}$ $
\begin{enumerate}
    \item $\autfl(\Sigma, \bs{e})=\autf_{\LL}^{\omega}(\Sigma, \bs{e})$.
    \item $\autfkp(\Sigma, \bs{e})=\autf_{\KP}^{\omega}(\Sigma, \bs{e})$.
    \item $\autfsh(\Sigma, \bs{e})=\autf_{\SH}^{\omega}(\Sigma, \bs{e})$.
\end{enumerate}
\end{fact}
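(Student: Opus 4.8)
The plan is to prove, for each $X\in\{\LL,\KP,\SH\}$, the nontrivial inclusion $\autf_X^{\omega}(\Sigma,\bs{e})\subseteq\autf_X(\Sigma,\bs{e})$; the reverse inclusion is the instance $\lambda=\omega$ of the defining condition, and in fact $\autf_X^{\lambda}(\Sigma,\bs{e})$ is decreasing in $\lambda\ge 1$ (pad a shorter tuple by repetitions) with $\autf_X(\Sigma,\bs{e})=\bigcap_{\lambda}\autf_X^{\lambda}(\Sigma,\bs{e})$. So I would fix $\sigma\in\autf_X^{\omega}(\Sigma,\bs{e})$, viewed as the restriction to $\Sigma(\mathfrak{C})$ of some element of $\aute(\mathfrak{C})$, and a tuple $b=(b_i)_{i<\lambda}$ with each $b_i\models\Sigma(x_i)$, and show $b\equiv^X_{\bs{e}}\sigma(b)$. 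Unwinding the hypothesis over all countable index sets $u\subseteq\lambda$ gives, for every finite sub-tuple $b_v$ of the flattening of $b$ (which lies inside the flattening of $(b_i)_{i\in u}$ for a suitable finite $u$), that $b_v\equiv^X_{\bs{e}}\sigma(b_v)$.

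For $X=\KP$ and $X=\SH$ this already suffices. By Fact \ref{fact:characterization of equivalence relations}(2),(3), $b_v\kpequiv_{\bs{e}}\sigma(b_v)$ amounts to $\tp(b_v/\bdd(\bs{e}))=\tp(\sigma(b_v)/\bdd(\bs{e}))$, and $b_v\shequiv_{\bs{e}}\sigma(b_v)$ to the analogous equality over $\acl(\bs{e})$, where $\bdd(\bs{e})$ and $\acl(\bs{e})$ are fixed hyperimaginaries. A type over a fixed hyperimaginary has finite character in its object variables (Definition \ref{definition: hyperimaginary type}, Remark \ref{rem:type-definabilityt_equility of types}): each of its conditions mentions only finitely many coordinates. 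Hence the equalities over all finite $b_v$ assemble to $\tp(b/\bdd(\bs{e}))=\tp(\sigma(b)/\bdd(\bs{e}))$, resp.\ over $\acl(\bs{e})$, i.e.\ $b\kpequiv_{\bs{e}}\sigma(b)$, resp.\ $b\shequiv_{\bs{e}}\sigma(b)$, which proves (2) and (3).

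The case $X=\LL$ is the main obstacle, since $\lequiv_{\bs{e}}$ is not equality of types over any fixed parameter set. Here I would use the standard fact -- available over a hyperimaginary from the cited development of the Lascar group -- that $\lequiv_{\bs{e}}$ restricted to a single complete type $q$ over $\bs{e}$ is type-definable over $\bs{e}$: concretely, the Lascar distance $d_{\bs{e}}$ is bounded by some $N<\omega$ on $q(\mathfrak{C})$, and the relation $d_{\bs{e}}(x,y)\le N$ is type-definable over $\bs{e}$ (hence has finite character). Apply this with $q=\tp(b/\bs{e})$, noting $\sigma(b)\models q$ since $\sigma$ restricts an element of $\aute(\mathfrak{C})$. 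A short back-and-forth -- using that $\autfl(\mathfrak{C},\bs{e})$ is generated by automorphisms fixing models containing $\bs{e}$ -- shows both that the same $N$ bounds $d_{\bs{e}}$ on $\tp(b_v/\bs{e})$ for every finite $b_v$ and that the partial type defining $d_{\bs{e}}(x,y)\le N$ restricts coordinate-wise to the corresponding partial type for sub-tuples. Consequently, from $\sigma(b_v)\lequiv_{\bs{e}}b_v$ for all finite $b_v$ (first paragraph) we get $d_{\bs{e}}(b_v,\sigma(b_v))\le N$ for all finite $b_v$, and finite character of the type-definable relation $d_{\bs{e}}(x,y)\le N$ yields $d_{\bs{e}}(b,\sigma(b))\le N<\infty$, i.e.\ $b\lequiv_{\bs{e}}\sigma(b)$. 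This gives (1).

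I expect the only genuinely non-routine ingredient to be the input invoked in the last paragraph: that $\lequiv_{\bs{e}}$ restricted to a complete type over a hyperimaginary is type-definable (equivalently, that the Lascar distance is uniformly bounded on such a type), including for types in infinitely many variables. Everything else -- the reduction to finite sub-tuples, the two finite-character arguments, and the coordinate-wise behaviour of the Lascar-distance relations -- is routine bookkeeping with lifts of $\sigma$ to $\aute(\mathfrak{C})$.
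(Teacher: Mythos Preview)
Your treatment of (2) and (3) is correct and essentially identical to the paper's: both use that $\kpequiv_{\bs{e}}$ and $\shequiv_{\bs{e}}$ are type-definable (as $\equiv_{\bdd(\bs{e})}$ and $\equiv_{\acl(\bs{e})}$) and hence determined by finite subtuples via compactness.

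Your argument for (1), however, rests on a false premise. The claimed ``standard fact'' that $\lequiv_{\bs{e}}$ restricted to a single complete type $q$ over $\bs{e}$ is type-definable --- equivalently, that the Lascar distance is bounded by some fixed $N<\omega$ on $q(\mathfrak{C})$ --- is precisely what fails in non-$G$-compact theories. Indeed, if this held for every complete type then $\lequiv_{\bs{e}}$ would coincide with $\kpequiv_{\bs{e}}$ everywhere, and the known examples (e.g.\ those of Casanovas--Lascar--Pillay--Ziegler) show this is not the case. So there is no a priori bound $N$ available, and your compactness step cannot get started.

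The paper's proof of (1) avoids this by \emph{extracting} a uniform bound from the hypothesis itself rather than from the ambient type. Given $b=(b_i)_{i<\alpha}$ and $c=\sigma(b)$, one knows (by induction, or directly from the $\omega$-hypothesis) that every proper initial segment has some finite Lascar distance $n_\beta$. If these $n_\beta$ were unbounded, one could choose $\beta_k$ with $n_{\beta_k}\ge k$; for each such $\beta_k$ compactness yields a \emph{finite} $I_k\subseteq\beta_k$ with $d_{\bs{e}}(b_{I_k},c_{I_k})\ge k$. Then $I=\bigcup_k I_k$ is a countable index set on which the Lascar distance is infinite, contradicting $\sigma\in\autf_{\LL}^{\omega}(\Sigma,\bs{e})$. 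Hence the $n_\beta$ are bounded by some $k_0$, and now compactness gives $d_{\bs{e}}(b,c)\le k_0$. The missing idea in your proposal is exactly this pigeonhole step: the uniform bound is a \emph{consequence} of the countable-tuple hypothesis, not an input from the theory.
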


\begin{proof}
(1): The proof is the same as \cite[Remark 3.3]{DKL17} and \cite[Proposition 6.3]{Lee22}, but we prove it again here because their statements are (seemingly) slightly weaker and the proof requires some facts not contained in this paper.
Let $\alpha \ge \omega$ be a small ordinal and $b = (b_i)_{i<\alpha}$, $c = (c_i)_{i<\alpha}$ with each $b_i \models \Sigma(x_i)$, $c_i \models \Sigma(x_i)$.
We will prove that if their corresponding countable subtuples have the same Lascar strong type over $\bs{e}$, then $b \lequiv_{\bs{e}} c$, which will complete the proof of (1).

We will use the following facts over a hyperimaginary (\cite[Definition 4.4 and Remark 4.5(1)(4)]{KL23} with $F$ being ``$=$''):
\begin{itemize}
    \item For any tuples $b$ and $c$ of $\mathfrak{C}$, $b \lequiv_{\bs{e}} c$ if and only if $d_{\bs{e}}(b, c) \le n$ for some $n < \omega$, where $d_{\bs{e}}(b, c)$ is called the {\em Lascar distance} between $b$ and $c$, and $d_{\bs{e}}(b, c) \le n$ can be expressed as a type over $abc$.
    Also, $d_{\bs{e}}(b, c) \le n$ if and only if $d_{\bs{e}}(b', c') \le n$ for any corresponding subtuples $b'$ and $c'$ of $b$ and $c$.
\end{itemize}

Assume that the assertion holds for all infinite ordinals less than $\alpha$, and the corresponding countable subtuples of $b =  (b_i)_{i<\alpha}$ and $c = (c_i)_{i<\alpha}$ have the same Lascar strong type over $\bs{e}$, so that for each $\beta < \alpha$, there is $n_{\beta} < \omega$ such that $d_{\bs{e}}((b_i)_{i<\beta}, (c_i)_{i<\beta}) \le n_{\beta}$ by the inductive hypothesis.

Suppose that there is a sequence of ordinals $(\beta_k)_{k < \omega}$ such that $\beta_k < \alpha$ and $n_{\beta_k} \ge k$ for each $k < \omega$, for a contradiction.
For any subset $I$ of $\alpha$, let $b_{I}$ denote the subsequence $(b_i: i \in I)$.
If it is true that for any finite subset $I_k$ of $\beta_k$, $d_{\bs{e}}(b_I, c_I) \le k - 1$, then by compactness, $d_{\bs{e}}(b_{\beta_k}, c_{\beta_k}) \le k - 1$, which is a contradiction.
Thus, for each $\beta_k$, there is a finite subset $I_k$ such that $d_{\bs{e}}(b_{I_k}, c_{I_k}) \ge k$.
Let $I = \bigcup_{k < \omega} I_k$, which is a countable subset of $\alpha$.
Then $d_{\bs{e}}(b_I, c_I) \ge k$ for any $k < \omega$, which is impossible by the assumption.
Hence there is $k_0 < \omega$ such that for any $\beta < \alpha$, $n_{\beta} \le k_0$, thus by compactness, $d_{\bs{e}}(b, c) \le k_0$.

(2): $\sigma \in \autfkp(\Sigma, \bs{e})$ if and only if for any tuple $b$ of realizations of $\Sigma$, $b \kpequiv_{\bs{e}} \sigma(b)$.
But $\kpequiv_{\bs{e}}$ is equivalent to $\equiv_{\bdd(\bs{e})}$ (which is type-definable) by Fact \ref{fact:characterization of equivalence relations}, thus if $b' \kpequiv_{\bs{e}} \sigma(b')$ for any corresponding subtuples $b', \sigma(b')$ of $b, \sigma(b)$, which are tuples of finitely many realizations of $\Sigma$, then $b \kpequiv_{\bs{e}} \sigma(b)$ by compactness.
Thus if $\sigma \in \autf_{\KP}^{\omega}(\Sigma, \bs{e})$, then $\sigma \in \autfkp(\Sigma, \bs{e})$.
The proof for (3) is the same as (2), replacing $\bdd(\bs{e})$ by $\acl(\bs{e})$.
\end{proof}

\begin{fact}\label{fact:indep of monster model}
% [{\cite[Proposition 3.6]{DKL17}} or {\cite[Proposition 6.5]{Lee22}}]
The relativized Lascar group $\gall(\Sigma, \bs{e})$, relativized KP group $\galkp(\Sigma, \bs{e})$, and relativized Shelah group $\galsh(\Sigma, \bs{e})$ do not depend on the choice of $\mathfrak{C}$.
\end{fact}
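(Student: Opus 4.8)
The statement to prove is that the relativized Galois groups $\gall(\Sigma,\bs{e})$, $\galkp(\Sigma,\bs{e})$, and $\galsh(\Sigma,\bs{e})$ are independent of the choice of monster model $\mathfrak{C}$. The plan is to reduce each of these to the independence of the ordinary ($\Sigma=x$, unrestricted) Galois groups, which is already recorded in Fact \ref{fact:Lascar group is small}(1) (for the Lascar group; the KP and Shelah cases follow from it since they are quotients by topologically-defined subgroups of $\gall(T,\bs e)$, and the topology is model-independent by Fact \ref{fact:indep of M and top gp}). Concretely, given two monster models $\mathfrak{C}\preceq\mathfrak{C}'$ (any two monster models embed in a common larger one, so it suffices to treat this case), I would exhibit a natural restriction map from the group computed in $\mathfrak{C}'$ to the group computed in $\mathfrak{C}$ and show it is a well-defined isomorphism.

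First I would define the restriction $\rho:\aute(\Sigma(\mathfrak{C}'))\to\aute(\Sigma(\mathfrak{C}))$. Every $f'\in\aute(\mathfrak{C}')$ restricts to an automorphism $f$ of $\mathfrak{C}$ fixing $\bs e$ (using strong homogeneity of $\mathfrak{C}'$ to see every such $f$ arises this way, as in the proof of Fact \ref{fact:Lascar group is small}(1) for the unrelativized case), and since $\Sigma$ is $\bs e$-invariant we have $\Sigma(\mathfrak{C})=\Sigma(\mathfrak{C}')\cap\mathfrak{C}$, so $f'\!\upharpoonright\!\Sigma(\mathfrak{C}')$ determines $f\!\upharpoonright\!\Sigma(\mathfrak{C})$. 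This gives a surjective group homomorphism $\rho$. The key point is that $\rho$ maps $\autfl(\Sigma,\bs e)$ computed in $\mathfrak{C}'$ onto (and with kernel contained in) $\autfl(\Sigma,\bs e)$ computed in $\mathfrak{C}$: by Fact \ref{fact:small tuple enough}(1) membership in $\autfl(\Sigma,\bs e)$ is tested on small (countable-index) tuples $b$ of realizations of $\Sigma$, and whether $b\lequiv_{\bs e}\sigma(b)$ is witnessed by a bound on the Lascar distance $d_{\bs e}(b,\sigma(b))\le n$, a condition expressed by a small type over $\bar ab\,\sigma(b)$ whose satisfaction is absolute between $\mathfrak{C}$ and $\mathfrak{C}'$ (both being monster models of $T$ containing the relevant small parameters). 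Hence $\sigma'\in\autfl(\Sigma,\bs e)$ in $\mathfrak{C}'$ iff $\rho(\sigma')\in\autfl(\Sigma,\bs e)$ in $\mathfrak{C}$, so $\rho$ descends to an isomorphism of quotient groups.

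For the KP group I would instead use the characterization $\kpequiv_{\bs e}\,\Longleftrightarrow\,\equiv_{\bdd(\bs e)}$ from Fact \ref{fact:characterization of equivalence relations}(2): this is a type-definable condition (over a small hyperimaginary), hence again absolute between $\mathfrak{C}$ and $\mathfrak{C}'$, so the same argument shows $\rho$ induces an isomorphism on $\galkp(\Sigma,\bs e)$. Likewise for the Shelah group, using $\shequiv_{\bs e}\,\Longleftrightarrow\,\equiv_{\acl(\bs e)}$ from Fact \ref{fact:characterization of equivalence relations}(3), which is also type-definable. In each case the nontrivial direction ("$\rho(\sigma')$ in the subgroup $\Rightarrow$ $\sigma'$ in the subgroup") uses that an extension $\sigma'$ of $\sigma$ to $\mathfrak{C}'$ still moves any small tuple of realizations of $\Sigma(\mathfrak{C}')$ to a $\equiv_{\bdd(\bs e)}$- (resp. $\equiv_{\acl(\bs e)}$-, resp. $\lequiv_{\bs e}$-) equivalent tuple — this follows because the relevant equivalence relation is $\bs e$-invariant and type-definable over a small set, so a tuple in $\mathfrak{C}'$ and its $\sigma'$-image satisfy it iff some (any) corresponding conjugate small configuration in $\mathfrak{C}$ does.

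**Main obstacle.** The delicate point is not the homomorphism $\rho$ but checking that the defining conditions of the relativized strong-automorphism subgroups are genuinely absolute when passing to the larger monster: one must be careful that realizations of $\Sigma$ in $\mathfrak{C}'$ are not "new" in a way that escapes detection in $\mathfrak{C}$. This is exactly where $\bs e$-invariance of $\Sigma$ and the small-tuple reduction of Fact \ref{fact:small tuple enough} are essential — they let us reduce to small configurations, whose types (and Lascar distances, and $\bdd$/$\acl$-types) are preserved under elementary embeddings of monster models. Once that absoluteness is in hand, the rest is formal.
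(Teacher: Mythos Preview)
Your overall strategy---define a group homomorphism between the groups computed in $\mathfrak{C}$ and in $\mathfrak{C}'$, and check that it carries the relevant strong-automorphism subgroup onto the corresponding one using absoluteness of Lascar distance (resp.\ of $\equiv_{\bdd(\bs e)}$, $\equiv_{\acl(\bs e)}$)---is the same as the paper's. However, your map goes in the wrong direction and, as written, does not exist.

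You claim that every $f'\in\aute(\mathfrak{C}')$ restricts to an automorphism of $\mathfrak{C}$. This is false: an automorphism of the larger monster need not fix $\mathfrak{C}$ setwise (already in the theory of a pure infinite set, $\aut(\mathfrak{C}')$ is the full symmetric group and a generic permutation moves elements of $\mathfrak{C}$ outside $\mathfrak{C}$). Hence there is no restriction homomorphism $\rho:\aute(\Sigma(\mathfrak{C}'))\to\aute(\Sigma(\mathfrak{C}))$, and your map $\rho$ is not well-defined even before passing to quotients.

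The paper instead defines the map in the opposite direction, by \emph{extension}: $\eta:\gal_X(\Sigma(\mathfrak{C}),\bs e)\to\gal_X(\Sigma(\mathfrak{C}'),\bs e)$, $[f]\mapsto[f']$ for any extension $f'\in\aute(\mathfrak{C}')$ of $f$. Extensions always exist by homogeneity of $\mathfrak{C}'$. Well-definedness is then the substantive step: given $f\upharpoonright\Sigma(\mathfrak{C})\in\autf_X(\Sigma(\mathfrak{C}),\bs e)$ and a tuple $b'$ of realizations of $\Sigma$ in $\mathfrak{C}'$, one chooses $b$ in $\mathfrak{C}$ with $b\equiv_M b'$ for a small model $M\ni a$, so $b'\lequiv_{\bs e} b$, and then chains
\[
f'(b')\;\lequiv_{\bs e}\;f'(b)=f(b)\;\equiv^X_{\bs e}\;b\;\lequiv_{\bs e}\;b'.
\]
This is precisely the ``conjugate small configuration'' idea you gesture at in your last paragraph, but it lives on the extension side. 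Surjectivity of $\eta$ uses Fact~\ref{fact:nu is well-defined}: any $f^*\in\aute(\mathfrak{C}')$ has the same image as some $f\in\aute(\mathfrak{C})$ with $\tp(f(M)/M)=\tp(f^*(M)/M)$. Injectivity is immediate since an extension witnesses the same $\equiv^X_{\bs e}$-relations on tuples in $\mathfrak{C}$. Once you reverse the direction of your map, your argument becomes essentially the paper's.
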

\begin{proof}
Basically, the proof of \cite[Proposition 3.6]{DKL17} or \cite[Proposition 6.5]{Lee22} can be used, but the situations are not exactly the same as here.
Here we prove for $\gal_{\KP}(\Sigma, \bs{e})$ in the same way with full details.
The proof for $\gall(\Sigma, \bs{e})$ and $\gal_{\SH}(\Sigma, \bs{e})$ can be obtained by replacing KP with L and S in the following proof, respectively.

Let $\mathfrak{C} \prec \mathfrak{C}'$ be monster models.
Define $\eta: \galkp(\Sigma(\mathfrak{C}), \bs{e}) \rightarrow \galkp(\Sigma(\mathfrak{C}'), \bs{e})$ by
$$f \upharpoonright \Sigma(\mathfrak{C}) \cdot \autfkp(\Sigma(\mathfrak{C}), \bs{e}) \mapsto f' \upharpoonright \Sigma(\mathfrak{C}') \cdot \autfkp(\Sigma(\mathfrak{C}'), \bs{e}),$$
$$\text{or, in short, }[f] \mapsto [f']$$
where $f' \in \aute(\mathfrak{C}')$ is any extension of $f \in \aute(\mathfrak{C})$.

We first check that it is well-defined.
It suffices to show the following:
For any $f \in \aute(\mathfrak{C})$, if $f \upharpoonright \Sigma(\mathfrak{C}) \in \autfkp(\Sigma(\mathfrak{C}), \bs{e})$, then $f' \upharpoonright \Sigma(\mathfrak{C}') \in \autfkp(\Sigma(\mathfrak{C}'), \bs{e})$.
First, consider $f \in \aute(\mathfrak{C})$ such that $f \upharpoonright \Sigma(\mathfrak{C}) \in \autfkp(\Sigma(\mathfrak{C}), \bs{e})$ and any extension $f'$ to $\mathfrak{C}'$.
Let $(b'_i)_{i < \lambda}$ be a tuple of realizations of $\Sigma$ in $\mathfrak{C}'$, and $(b_i)_{i < \lambda}$ in $\mathfrak{C}$ realize $\tp((b'_i)_{i < \lambda}/M)$ where $M$ is a small model containing $a$ (so that $\bs{e} \in \dcl(M))$.
Then $(b'_i)_{i < \lambda} \lequiv_{\bs{e}} (b_i)_{i < \lambda}$, and thus
$$f'((b'_i)_{i < \lambda}) \lequiv_{\bs{e}} f'((b_i)_{i < \lambda}) = f((b_i)_{i < \lambda}) \kpequiv_{\bs{e}} (b_i)_{i < \lambda} \lequiv_{\bs{e}} (b'_i)_{i < \lambda}.$$
Thus $f' \upharpoonright \Sigma(\mathfrak{C}') \in \autfkp(\Sigma(\mathfrak{C}'), \bs{e})$.
% The next well-definedness can be proved similarly.

It is straightforward to see that $\eta$ is a homomorphism:
For any $f, g \in \aute(\mathfrak{C})$ and extensions $f', g' \in \aute(\mathfrak{C}')$, $f' \circ g'$ is an extension of $f \circ g$, hence $\eta([f \circ g]) = [f' \circ g']$ in $\galkp(\mathfrak{C}', \bs{e})$ by well-definedness.

To show that it is injective, let $f_1, f_2 \in \aute(\mathfrak{C})$, $f'
_1, f'_2 \in \aute(\mathfrak{C}')$ be any extensions of them and assume $[f'_1] = [f'_2]$ in $\galkp(\Sigma(\mathfrak{C}'), \bs{e})$.
Then $f_1'f_2'^{-1} \in \autfkp(\mathfrak{C}', \bs{e})$, so for any tuple $(b_i)_{i < \lambda}$ of realizations of $\Sigma$ in $\mathfrak{C}$, $f_1f_2^{-1}((b_i)_{i < \lambda}) = f_1'f_2'^{-1}((b_i)_{i < \lambda}) \kpequiv_{\bs{e}} (b_i)_{i < \lambda}$.

For surjectivity, consider any $f^* \in \aute(\mathfrak{C}')$ and small model $M$ containing $a$.
Choose any $f \in \aute(\mathfrak{C})$ such that $\tp(f(M)/M) = \tp(f^*(M)/M)$.
Then by Fact \ref{fact:nu is well-defined}, for any extension $f'$ of $f$, $\overline{f'} = \overline{f^*}$ in $\galkp(\Sigma(\mathfrak{C}'), \bs{e})$.
\end{proof}

\section{Topology on relativized model theoretic Galois groups}\label{sec:top on rel}
In this section, we will generalize Fact \ref{fact:indep of M and top gp} and Fact \ref{fact:characterization of equivalence relations} to the relativized Lascar group. To that end, we will find a topology $\mathfrak{t}$ on the relativized Lascar group $\gall(\Sigma,\bs{e})$, so that
\begin{itemize}
    \item the group $\gall(\Sigma,\bs{e})$ is a quasi-compact group with respect to $\mathfrak{t}$, and
    \item $\autfkp(\Sigma, \bs{e})=\pi'^{-1}[\gall^c(\Sigma,\bs{e})]$ and $\autfsh(\Sigma, \bs{e})=\pi'^{-1}[\gall^0(\Sigma,\bs{e})]$,
\end{itemize}
where $\pi':\aute(\Sigma)\rightarrow \gall(\Sigma,\bs{e})$ is the natural surjective map, and $\gall^c(\Sigma,\bs{e})$ and $\gall^0(\Sigma,\bs{e})$ are the closure of the trivial subgroup and the connected component containing the identity of $\gall(\Sigma,\bs{e})$ with respect to the topology $\mathfrak{t}$, respectively.

For this purpose, we introduce a notion that will replace the role of a small model when we consider Lascar groups.

\begin{definition}\label{def:Lascar tuple}
A small tuple $b$ of realizations of $\Sigma$ is called a {\em Lascar tuple} (in $\Sigma$ over $\bs{e}$) if $\autfl(\Sigma, \bs{e})=\{\sigma \in \aute(\Sigma) :b \lequiv_{\bs{e}} \sigma(b)\}$.
\end{definition}

\begin{lemma}\label{lem:existence of Lascar tuple}
For any $\bs{e}$-invariant partial type $\Sigma(x)$, there is a Lascar tuple.
\end{lemma}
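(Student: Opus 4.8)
The plan is to build a Lascar tuple by combining two ingredients: a small model $M \models T$ with $\bs{e} \in \dcl(M)$ (whose type over itself controls $\autfl(\mathfrak{C},\bs{e})$ by Fact~\ref{fact:nu is well-defined} and the definition of $\autfl$), and enough realizations of $\Sigma$ to ``see'' everything the restricted automorphisms do. The naive hope that any enumeration of $\Sigma(\mathfrak{C})$ works fails because an automorphism restricted to $\Sigma(\mathfrak{C})$ need not remember its action on a model $M$ disjoint from $\Sigma(\mathfrak{C})$; so the real content is to replace $M$ by a tuple \emph{inside} $\Sigma(\mathfrak{C})$ that still pins down the coset in $\gall(\mathfrak{C},\bs{e})$.

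First I would fix a small model $M \supseteq \{a\}$, so $\bs{e} \in \dcl(M)$, and recall that $\sigma \upharpoonright \Sigma(\mathfrak{C}) \in \autfl(\Sigma,\bs{e})$ iff $b \lequiv_{\bs{e}} \sigma(b)$ for the chosen tuple $b$; the inclusion $\supseteq$ in Definition~\ref{def:Lascar tuple} is automatic once $b$ lists enough of $\Sigma(\mathfrak{C})$ (take $b$ to enumerate $\Sigma(\mathfrak{C})$, or at least a representative of every type over $M$ realized in $\Sigma(\mathfrak{C})$ together with a realization of each $\Sigma$-orbit), because then $b \lequiv_{\bs{e}} \sigma(b)$ forces every subtuple, hence every realization of $\Sigma$, to be moved within its Lascar strong type. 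The work is the reverse inclusion: if $b \lequiv_{\bs{e}} \sigma(b)$ I must produce an actual $f \in \autfl(\mathfrak{C},\bs{e})$ with $f \upharpoonright \Sigma(\mathfrak{C}) = \sigma$. For this I would choose $b$ to additionally encode a copy of $M$: by the assumption available here (or by Fact~\ref{fact:countable hyperimagianry is enough} and working over $\bs{e}$) one can find a small tuple $c$ of realizations of $\Sigma$ with $\bs{e} \in \dcl(c)$ — wait, that is an added hypothesis in later sections, so instead I would argue more carefully. Since $\Sigma$ is $\bs{e}$-invariant and consistent, pick $b_0 \models \Sigma$; the point is that for the \emph{existence} statement we are free to take $b$ as large as we like (still small), so let $b$ enumerate all of $\Sigma(\mathfrak{C})$.

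With $b$ enumerating $\Sigma(\mathfrak{C})$, suppose $\sigma = g \upharpoonright \Sigma(\mathfrak{C})$ for some $g \in \aute(\mathfrak{C})$ and $b \lequiv_{\bs{e}} \sigma(b) = g(b)$. Then there is $h \in \autfl(\mathfrak{C},\bs{e})$ with $h(b) = g(b)$, i.e. $h(x) = g(x)$ for every $x \in \Sigma(\mathfrak{C})$, so $h \upharpoonright \Sigma(\mathfrak{C}) = g \upharpoonright \Sigma(\mathfrak{C}) = \sigma$. Hence $\sigma \in \autfl(\Sigma,\bs{e})$, which is exactly the inclusion $\subseteq$ needed. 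Combined with the trivial $\supseteq$ (if $\sigma \in \autfl(\Sigma,\bs{e})$ then by Fact~\ref{fact:small tuple enough}(1) and the definition $b \lequiv_{\bs{e}} \sigma(b)$ for the small tuple $b$, since $b$ is a union of its countable subtuples each moved within its Lascar strong type, and Lascar distance is controlled uniformly as in the proof of Fact~\ref{fact:small tuple enough}), we get $\autfl(\Sigma,\bs{e}) = \{\sigma \in \aute(\Sigma) : b \lequiv_{\bs{e}} \sigma(b)\}$, so $b$ is a Lascar tuple.

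\textbf{Main obstacle.} The delicate point is the passage from ``$b \lequiv_{\bs{e}} \sigma(b)$'' to ``$\sigma$ extends to a genuine Lascar strong automorphism of $\mathfrak{C}$'', and dually that $\autfl(\Sigma,\bs{e})$ really is captured by a \emph{small} tuple rather than needing the full proper class of realizations — this is why taking $b$ to enumerate all of $\Sigma(\mathfrak{C})$ (which is small since $\Sigma$ has a small number of realizations of each small arity up to the monster's homogeneity, and $x$ is a small tuple of variables) is the clean choice, and why Fact~\ref{fact:small tuple enough}(1) is invoked to reduce the quantifier over all cardinals $\lambda$ to $\omega$ and then to the single tuple $b$. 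I expect the write-up to spend most of its length justifying that $b \lequiv_{\bs{e}} \sigma(b)$ for this $b$ forces $\sigma \in \autfl(\Sigma,\bs{e})$ via the uniform Lascar-distance bound, exactly paralleling the compactness argument in the proof of Fact~\ref{fact:small tuple enough}(1).
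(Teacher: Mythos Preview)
There is a genuine gap: your candidate $b$ is an enumeration of all of $\Sigma(\mathfrak{C})$, which is \emph{not} small. The parenthetical justification (``which is small since $\Sigma$ has a small number of realizations of each small arity up to the monster's homogeneity'') is mistaken --- unless $\Sigma$ is algebraic, $\Sigma(\mathfrak{C})$ has the cardinality of the monster model (e.g.\ take $\Sigma(x)=\{x=x\}$). Since Definition~\ref{def:Lascar tuple} requires a \emph{small} tuple, and since the step ``there is $h\in\autfl(\mathfrak{C},\bs{e})$ with $h(b)=g(b)$'' uses the orbit description of $\lequiv_{\bs e}$ which is only available for small tuples, the argument does not go through. (There is also some confusion about which inclusion is the trivial one: for any \emph{small} tuple $b$ of realizations, $\autfl(\Sigma,\bs{e})\subseteq\{\sigma: b\lequiv_{\bs e}\sigma(b)\}$ is immediate from the definition; the content is the reverse inclusion, and it does \emph{not} require extending $\sigma$ to a genuine Lascar strong automorphism of $\mathfrak{C}$.)

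The paper's approach replaces smallness of $\Sigma(\mathfrak{C})$ by smallness of the Lascar group. By Fact~\ref{fact:Lascar group is small}(2) there are only boundedly many Lascar strong types over $\bs e$ of countable tuples of realizations of $\Sigma$; let $b=(b_i)_{i<\kappa}$ list one representative of each, so $b$ is small. If $b\lequiv_{\bs e}\sigma(b)$ and $d$ is any countable tuple of realizations of $\Sigma$, pick $i$ with $d\lequiv_{\bs e}b_i$; then
\[
d\ \lequiv_{\bs e}\ b_i\ \lequiv_{\bs e}\ \sigma(b_i)\ \lequiv_{\bs e}\ \sigma(d),
\]
the last step by $\bs e$-invariance of $\lequiv_{\bs e}$. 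Fact~\ref{fact:small tuple enough}(1) then gives $\sigma\in\autfl(\Sigma,\bs e)$. No lift of $\sigma$ to $\autfl(\mathfrak{C},\bs e)$ is ever constructed.
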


\begin{proof}
By Fact \ref{fact:Lascar group is small}(2), the set of all Lascar strong types
$$C = \{c_{\lequiv_{\bs{e}}}: c \text{ is a countable tuple of realizations of }\Sigma\}$$
is small, say its cardinality is $\kappa$.
% For $\bar x=(x_i)_{i<\omega}$ with $|x|=|x_i|$, let $\Sigma^{\omega}(\bar x)=\bigcup_{i<\omega}\Sigma(x_i)$.
% Note that there are boundedly many Lascar types in $\Sigma^{\omega}(\mathfrak{C})$ (i.e. boundedly many Lascar equivalence classes in $\Sigma^{\omega}(\mathfrak{C})$) by Fact \ref{fact:Lascar group is small}(2).

Let $b = (b_i: i < \kappa)$ be a small tuple that collects representatives of Lascar strong types in $C$, only one for each equivalence class.
Then $b$ is the desired one; by Fact \ref{fact:small tuple enough}, it is enough to show that for an automorphism $f \in \aute(\mathfrak{C})$, if $b \lequiv_{\bs{e}} f(b)$, then for any countable tuple $d$ of realizations of $\Sigma$, $d \lequiv_{\bs{e}} f(d)$.
Note that there is $i < \kappa$ such that $d \lequiv_{\bs{e}} b_i$.
Then we have
$$d \lequiv_{\bs{e}} b_i \lequiv_{\bs{e}} f(b_i) \lequiv_{\bs{e}} f(d)$$
where the last equivalence follows from the invariance of $\lequiv_{\bs{e}}$.
\end{proof}

\begin{remark}\label{rem:basic properties on Lascar tuples}$ $
\begin{enumerate}
    \item Any small tuple of realizations of $\Sigma$ can be extended into a Lascar tuple.  
    \item Any small tuple of realizations of $\Sigma$ containing a Lascar tuple is a Lascar tuple.
\end{enumerate} 
\end{remark}

Fix a Lascar tuple $b$ in $\Sigma$ over $\bs{e}$, a small model $M$ with $b\in M$. Let 
\begin{align*}
S_b(b) & := \{\tp(\sigma(b)/b): \sigma \in \aute(\Sigma)\} \\
& = \{\tp(f(b)/b): f \in \aute(\mathfrak{C})\}.
\end{align*}
By the natural restriction map $r:S_M(M)\rightarrow S_b(b)$, which is continuous with respect to the logic topology on the type space, we have that $S_b(b)$ is a compact space.

\begin{remark}\label{remark:e in dcl(b) is necessary}
Assume that $\bs{e} \in \dcl(b)$ for some Lascar tuple $b$ in $\Sigma$ over $\bs{e}$.
Then the map
$$\nu_b:S_b(b)\rightarrow \gal_{\LL}^{\lambda}(\Sigma, \bs{e}),\ p = \tp(\sigma(b)/b) \mapsto [\sigma].$$
is well-defined.
\end{remark}

\begin{proof}
Given $\sigma,\sigma'\in \aute(\Sigma)$ with $\sigma(b)\equiv_{b}\sigma'(b)$, there is $\tau \in \aut_b(\Sigma)$ such that $\tau(\sigma'(b))=\sigma(b)$, and so $\sigma^{-1}\tau \sigma'\in \aut_b(\Sigma)$. Since $\bs{e}\in \dcl(b)$ and $b$ is a Lascar tuple, we have $$\aut_b(\Sigma)=\aut_{b\bs{e}}(\Sigma)\le\autfl(\Sigma,\bs{e}),$$
and
$$[\id]=[\sigma^{-1}\tau \sigma']=[\sigma]^{-1}[\tau][\sigma']=[\sigma]^{-1}[\sigma'] \mbox{, hence } [\sigma]=[\sigma'].$$
\end{proof}
% Next, Consider a map 
% Then it is not hard to check that $\nu_b$ is well-defined if $\bs{e} \in \dcl(b)$. Indeed, given $\sigma,\sigma'\in \aute(\Sigma)$ with $\sigma(b)\equiv_{b}\sigma'(b)$, there is $\tau \in \aut_b(\Sigma)$ such that $\tau(\sigma'(b))=\sigma(b)$ and so $\sigma^{-1}\tau \sigma'\in \aut_b(\Sigma)$. Since $\bs{e}\in \dcl(b)$ and $b$ is a Lascar tuple, we have $$\aut_b(\Sigma)=\aut_{b\bs{e}}(\Sigma)\le\autfl(\Sigma,\bs{e}),$$ so we have $$[\id]=[\sigma^{-1}\tau \sigma']=[\sigma]^{-1}[\tau][\sigma']=[\sigma]^{-1}[\sigma'] \mbox{, hence } [\sigma]=[\sigma'].$$

{\bf From now on, we assume that there is a Lascar tuple $b$ in $\Sigma$ over $\bs{e}$ such that $\bs{e} \in \dcl(b)$.} Note that it is equivalent to the condition that $\bs{e}\in\dcl(\Sigma)$, that is, there is a small tuple $c$ of realizations of $\Sigma$ such that $\bs{e}\in\dcl(c)$; if there is such a small tuple $c$, by Remark \ref{rem:basic properties on Lascar tuples}, we may assume that $c$ is a Lascar tuple.
This assumption is explicitly used in Remark \ref{remark:e in dcl(b) is necessary}, and in the proofs of Lemma \ref{lemma:kp and sh are closed} and Proposition \ref{prop:id clousre, component finest}.

\begin{remark}\label{remark:diagram of maps}
We have the following commutative diagram of natural surjective maps:
\begin{center}
\begin{tikzcd}
{\aute(\mathfrak{C})} \arrow[r, "\mu"] \arrow[d, "\xi"] & {S_M(M)} \arrow[r, "\nu"] \arrow[d, "r"] & {\gall(T, \bs{e})} \arrow[r, "\eta_{\KP}"] \arrow[d, "\xi_{\LL}"] & {\galkp(T, \bs{e})} \arrow[r, "\eta_{\SH}"] \arrow[d, "\xi_{\KP}"] & {\galsh(T, \bs{e})} \arrow[d, "\xi_{\SH}"]\\
{\aute(\Sigma)} \arrow[r, "\mu_b"] & {S_b(b)} \arrow[r, "\nu_b"] & {\gal_{\LL}(\Sigma, \bs{e})} \arrow[r, "\eta_{\KP, \Sigma}"] & {\gal_{\KP}(\Sigma, \bs{e})} \arrow[r, "\eta_{\SH, \Sigma}"] & {\gal_{\SH}(\Sigma, \bs{e})}
\end{tikzcd}
\end{center}
Put $\pi_b:= \nu_b \circ \mu_b$ and $\pi_{\Sigma}:= \pi_b \circ \xi = \xi_{\LL} \circ \pi$.
Note that the projection maps $\pi=\nu\circ \mu$ and $\pi'=\pi_b$, and the following are group homomorphisms:
\begin{itemize}
	\item $\xi:\aute(\mathfrak{C})\rightarrow \aute(\Sigma)$,
	\item $\xi_X:\gal_X(T,\bs{e})\rightarrow \gal_X(\Sigma,\bs{e})$ for $X\in\{\mbox{L}, \mbox{KP}, \mbox{S}\}$,
	\item $\pi:\aute(\mathfrak{C})\rightarrow \gall(T,\bs{e})$,
	\item $\pi_{\Sigma}:\aute(\mathfrak{C})\rightarrow \gall(\Sigma,\bs{e})$, and
	\item $\pi':\aute(\Sigma)\rightarrow \gall(\Sigma,\bs{e})$.
\end{itemize}
Also, note that possibly $\xi[\autfl(T,\bs{e})]\subsetneq \autfl(\Sigma,\bs{e})$, which will be discussed in Section \ref{sec:rest and example}.

\end{remark}

\begin{remark}[Relativized Galois groups are topological groups]\label{relativized Lascar is top gp}
Consider a topology $\mathfrak{t}_b$ on $\gal_{\LL}(\Sigma, \bs{e})$ given by the quotient topology via $\nu_b$. Then $(\gal_{\LL}(\Sigma, \bs{e}),\mathfrak{t}_b)$ is a quasi-compact topological group whose topology $\mathfrak{t}_b$ is independent of the choice of a Lascar tuple $b$, so there is no harm to denote this $\mathfrak{t}_b$ by $\mathfrak{t}$.
\end{remark}

\begin{proof}
The restriction map $r: S_M(M) \rightarrow S_b(b)$ is a continuous surjective map between compact Hausdorff spaces, hence a quotient map.
Thus $\nu_b: S_b(b) \rightarrow \gal_{\LL}(\Sigma, \bs{e})$ and $\nu_b \circ r:S_M(M) \rightarrow \gal_{\LL}(\Sigma, \bs{e})$ induce the same quotient topology on $\gal_{\LL}(\Sigma, \bs{e})$.

But the quotient topology on $\gal_{\LL}(\Sigma, \bs{e})$ given by the natural projection map $\xi_{\LL}: \gall(T, \bs{e}) \rightarrow \gal_{\LL}(\Sigma, \bs{e})$ is also the same as the above topology, thus the topology of $\gal_{\LL}(\Sigma, \bs{e})$ is independent of the choice $b$ (Fact \ref{fact:indep of M and top gp}) and it is a topological group since a quotient group of a topological group with quotient topology is a topological group (it is the same reasoning as \cite[Remark 3.4]{DKL17}).
%Regarding $\gal_{\KP}^{\lambda}(\Sigma, \bs{e})$ and $\gal_{\SH}^{\lambda}(\Sigma, \bs{e})$ as quotient groups of $\gal_{\LL}^{\lambda}(\Sigma, \bs{e})$, they are also topological groups.
\end{proof}

%{\bf From now on, we assume that there is a Lascar tuple $b$ in $\Sigma$ such that $\bs{e} \in \dcl(b)$.}

The following proposition is a relativized analogue of \cite[Proposition 2.2]{KL23}, which have extended \cite[Lemma 1.9]{LP01} over a hyperimaginary.
The purpose of Proposition \ref{prop:description of closed subgroup} is to interpret closed subgroups of $\gall(\Sigma, \bs{e})$ using bounded hyperimaginaries, and it will play a critical role in many arguments until the end of this section.

\begin{prop}\label{prop:description of closed subgroup}
Let $H\leq \aute(\mathfrak{C})$.
The following are equivalent.
\begin{enumerate}
    \item $\pi_{\Sigma}[H]$ is closed in $\gall(\Sigma, \bs{e})$ and $H = \pi_{\Sigma}^{-1}[\pi_{\Sigma}[H]]$.
    \item $H = \aut_{\bs{e}'\bs{e}}(\mathfrak{C})$ for some hyperimaginary $\bs{e}' \in \bdd(\bs{e})$, and one of the representatives of $\bs{e}'$ is a tuple of realizations of $\Sigma$.
    Caution: Recall that $\bs{e}' \in \bdd(\bs{e})$ does not mean that $\bs{e}'$ is a countable hyperimaginary (Definition/Remark \ref{def/rem: dcl acl bdd}).
\end{enumerate} 
\end{prop}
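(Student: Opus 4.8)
The plan is to prove the two implications $(2)\Rightarrow(1)$ and $(1)\Rightarrow(2)$ separately, exploiting the commutative diagram of Remark~\ref{remark:diagram of maps} and, crucially, the fact that $b$ is a Lascar tuple with $\bs{e}\in\dcl(b)$. Throughout I will use Fact~\ref{fact:e-hi is empty hi} freely: any $\bs{e}$-invariant type-definable equivalence relation on a product of copies of $\Sigma(\mathfrak{C})$, together with the Lascar tuple $b$, gives rise to an honest hyperimaginary, so I can move back and forth between ``automorphisms fixing a relativized orbit'' and ``automorphisms fixing a hyperimaginary''.

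\textbf{$(2)\Rightarrow(1)$.} Suppose $H=\aut_{\bs{e}'\bs{e}}(\mathfrak{C})$ for a bounded hyperimaginary $\bs{e}'$ having a representative $c$ which is a tuple of realizations of $\Sigma$. First I would check $H=\pi_\Sigma^{-1}[\pi_\Sigma[H]]$: since $\bs{e}'\in\bdd(\bs{e})$ we have $\bs{e}'\in\bdd(\bs{e})\cap\Sigma$-type things, and in particular $f(\bs{e}')=\bs{e}'$ iff $f(c)\equiv_{\bs{e}} c$ in the appropriate sense; the point is that the condition ``$f$ fixes $\bs{e}'$'' depends only on $f\!\restriction\!\Sigma(\mathfrak{C})$ and only on $\pi_\Sigma(f)$, because $\bs{e}'$ being bounded over $\bs{e}$ means $\lequiv_{\bs{e}}$ on tuples of realizations of $\Sigma$ already determines the $\bs{e}'$-class (the orbit equivalence relation of $H$ restricted to $\Sigma(\mathfrak{C})$-tuples is coarser than $\lequiv_{\bs{e}}$, hence factors through $\autfl(\Sigma,\bs{e})$, hence through $\pi_\Sigma$). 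So $H$ is a union of fibers of $\pi_\Sigma$, giving $H=\pi_\Sigma^{-1}[\pi_\Sigma[H]]$. For closedness of $\pi_\Sigma[H]$: because $\bs{e}'$ is bounded, its type over a Lascar tuple is type-definable, so the set $\{\tp(\sigma(b)/b):\sigma(c_0)\equiv_{\bs{e}} c_0\}$ — where $c_0$ is the sub-tuple of the extended Lascar tuple $b$ (using Remark~\ref{rem:basic properties on Lascar tuples}, extend $bc$ to a Lascar tuple, WLOG $c$ is part of $b$) realizing the relevant coordinates — is a closed subset of $S_b(b)$. Then $\pi_\Sigma[H]=\nu_b[\text{that closed set}]$, and since $\nu_b$ is a quotient map (Remark~\ref{relativized Lascar is top gp}) and that closed set is $\nu_b$-saturated, its image is closed.

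\textbf{$(1)\Rightarrow(2)$.} This is the substantive direction. Assume $\pi_\Sigma[H]$ is closed and $H=\pi_\Sigma^{-1}[\pi_\Sigma[H]]$. I would consider the orbit equivalence relation $E_H$ of $H$ acting on $\mathfrak{C}^\alpha$ for suitable $\alpha$ — but the key move is to look at the orbit equivalence relation $E_H^b$ of $\pi_\Sigma[H]$ on the Lascar tuple, i.e.\ on the single $\alpha$-tuple $b$ (where $\alpha=|b|$). Concretely, define $E'(x,y)$ on realizations of $\tp(b)$ by: $x\mathrel{E'}y$ iff there is $\sigma\in H$ (equivalently, by the saturation hypothesis, $[\sigma]\in\pi_\Sigma[H]$) with $\sigma(x)=y$ together with the requirement $x\equiv y$. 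The claim is that $E'$ is type-definable over $b$: this is where closedness of $\pi_\Sigma[H]$ in $\gall(\Sigma,\bs{e})$ is used — pulling back along $\nu_b$, the set $\nu_b^{-1}[\pi_\Sigma[H]]$ is closed in $S_b(b)$, hence type-definable, and $E'(b,y)$ holds iff $\tp(y/b)$ lies in this closed set. Then I would invoke Fact~\ref{fact:e-hi is empty hi} to replace the $\bs{e}$-invariant (one must check $\bs{e}$-invariance, which follows since $H\le\aute(\mathfrak{C})$... wait, $H$ need not be inside $\aute(\mathfrak{C})$ — actually $H\le\aut(\mathfrak{C})$; but $\pi_\Sigma$ is only defined on $\aute(\mathfrak{C})$, so implicitly $H\le\aute(\mathfrak{C})$ here, hence $E'$ is $\bs{e}$-invariant) type-definable $E'$ with an honest $\emptyset$-type-definable $F'$, getting a hyperimaginary $\bs{e}':=b_{E'}$ (or $(bb)_{F'}$) with $\bs{e}'\in\dcl(b)$-ish and $b$ itself a representative lying in $\Sigma(\mathfrak{C})^{\le}$-tuples. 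Boundedness of $\bs{e}'$: the number of $E'$-classes equals $|S_b(b)/\!\!\sim|$ which is bounded because $S_b(b)$ maps onto $\gall(\Sigma,\bs{e})$, a quotient of the small group $\gall(T,\bs{e})$. Finally $H=\aut_{\bs{e}'\bs{e}}(\mathfrak{C})$: one inclusion is by construction ($f\in H$ fixes its own orbit on $b$, so fixes $\bs{e}'$); the reverse uses $H=\pi_\Sigma^{-1}[\pi_\Sigma[H]]$ together with the fact that $b$ is a \emph{Lascar} tuple, so that $f(\bs{e}')=\bs{e}'$ forces $f(b)$ to be in the $H$-orbit of $b$ over $\bs{e}$, hence $[f]\in\pi_\Sigma[H]$, hence $f\in H$.

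\textbf{The main obstacle} I expect is the reverse inclusion $\aut_{\bs{e}'\bs{e}}(\mathfrak{C})\subseteq H$ in $(1)\Rightarrow(2)$: it requires that knowing $f$ fixes the single hyperimaginary $\bs{e}'$ built from the Lascar tuple is enough to conclude $f$ lies in $H$, and this is exactly the property that makes a Lascar tuple ``big enough'' (analogous to a small model determining the Lascar group element). One has to argue carefully that $f(\bs{e}')=\bs{e}'$ means $\tp(f(b)/b)\in\nu_b^{-1}[\pi_\Sigma[H]]$, i.e.\ $[f\!\restriction\!\Sigma]\in\pi_\Sigma[H]$, and then use $H=\pi_\Sigma^{-1}[\pi_\Sigma[H]]$; the subtlety is that $f(\bs{e}')=\bs{e}'$ a priori only says $f(b)\mathrel{E'}(\text{something conjugate})$, and one needs the precise definition of $E'$ via the orbit of $\pi_\Sigma[H]$ (not some larger saturation) to pin down $[f]$. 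A secondary technical point is verifying that the closed-subset-of-$S_b(b)$ pulled back from $\pi_\Sigma[H]$ genuinely yields a $b$-type-definable equivalence relation rather than just a $b$-type-definable set, which requires checking transitivity and symmetry — these follow from $\pi_\Sigma[H]$ being a \emph{subgroup}, so group structure must be tracked through $\nu_b$.
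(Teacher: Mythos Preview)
Your proposal is correct and follows the same overall strategy as the paper. The only notable difference is in $(1)\Rightarrow(2)$: the paper does not construct the equivalence relation $E'$ by hand but instead, after observing that $\aut_b(\mathfrak{C})=\aut_{b\bs{e}}(\mathfrak{C})\le\xi^{-1}[\autfl(\Sigma,\bs{e})]\le H$ and that $\{h(b):h\in H\}$ is type-definable over $b$ (from closedness of $\nu_b^{-1}[\pi_\Sigma[H]]$), simply invokes \cite[Proposition~2.2]{KL23} to obtain an $\emptyset$-type-definable $F$ with $H=\aut_{b_F\bs{e}}(\mathfrak{C})$; boundedness of $b_F$ then follows from smallness of $[\aute(\mathfrak{C}):H]$. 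Your sketch essentially reproves that cited proposition in situ, which is fine but accounts for the extra work you anticipate. In particular, your ``main obstacle'' dissolves once you note $\aut_b(\mathfrak{C})\le H$: if $f(\bs{e}')=\bs{e}'$ then $f(b)$ lies in the $H$-orbit of $b$, so $h(b)=f(b)$ for some $h\in H$, whence $h^{-1}f\in\aut_b(\mathfrak{C})\le H$ and $f\in H$ --- no delicate tracking of $\nu_b$ is needed. The ``secondary technical point'' (that the $H$-orbit relation on $\tp(b)$ is genuinely a type-definable equivalence relation, not just a type-definable class) is exactly what \cite[Proposition~2.2]{KL23} packages; it uses that $H$ is a group together with $\aut_b(\mathfrak{C})\le H$.
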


\begin{proof}
The method of proof is the same as the proof of \cite[Proposition 2.3]{KL23}, but we use a Lascar tuple $b$ instead of a model $M$. Note that the kernel of $\pi_{\Sigma}$ is $\xi^{-1}[\autfl(\Sigma,\bs{e})]$.

$(1)\Rightarrow (2)$:
We have
$$\aut_b(\mathfrak{C}) = \aut_{b \bs{e}}(\mathfrak{C}) \le \xi^{-1}[\autfl(\Sigma, \bs{e})] \le H$$
and since $\pi_{\Sigma}[H]$ is closed, $\nu_b^{-1}[\pi_{\Sigma}[H]]$ is closed and thus $\{h(b): h \in H\}$ is type-definable over $b$.
Hence by \cite[Proposition 2.2]{KL23}, $H = \aut_{b_F \bs{e}}(\mathfrak{C})$ for some $\emptyset$-type-definable equivalence relation $F$.

We have $b_F \in \bdd(\bs{e})$:
$[\aute(\mathfrak{C}): H] = \kappa$ is small since $\autfl(\mathfrak{C}, \bs{e}) \le H$, and so there is $\{f_i \in \aute(\mathfrak{C}): i < \kappa\}$ such that $\aute(\mathfrak{C}) = \bigsqcup_{i < \kappa} f_i \cdot H$.
Then for all $g, h \in \aute(\mathfrak{C})$, if $g \cdot H = h \cdot H$, then $h^{-1} g \in H$ and hence $g(b_F) = h(b_F)$.

$(2)\Rightarrow (1)$:
Say $\bs{e}' = c_F$ where $c$ is a tuple of realizations of $\Sigma$ and $F$ is an $\emptyset$-type-definable equivalence relation.
By Remark \ref{rem:basic properties on Lascar tuples} and Remark \ref{relativized Lascar is top gp}, we may assume that the Lascar tuple $b$ contains $c$.
For $q(x) = \tp(c/\bs{e})$, because $\bs{e}' \in \bdd(\bs{e})$,
$$F'(z_1, z_2) := (q(z_1) \wedge q(z_2) \wedge F(z_1, z_2)) \vee (\neg q(z_1) \wedge \neg q(z_2))$$
is an $\bs{e}$-invariant bounded equivalence relation on $\mathfrak{C}^{|c|}$.
Since $c$ is a tuple in $\Sigma(\mathfrak{C})$, for any $\sigma \in \autfl(\Sigma, \bs{e})$, $\sigma(c) \lequiv_{\bs{e}} c$ and thus $F'(\sigma(c), c)$ by Fact \ref{fact:characterization of equivalence relations}(1).
Note that $c_F = c_{F'}$, so we have $\xi^{-1}[\autfl(\Sigma, \bs{e})] \le H = \aut_{c_F \bs{e}}(\mathfrak{C})$, hence $\pi_{\Sigma}^{-1}[\pi_{\Sigma}[H]] = H$.
Notice that $H = \{f \in \aute(\mathfrak{C}): f(c) \models F(z, c)\}$.
Then $\nu_b^{-1}[\pi_{\Sigma}[H]] = \{p(z') \in S_b(b): F(z, c) \subseteq p(z')\}$ where $z \subseteq z'$ and $|z'| = |b|$.
Thus $\pi_{\Sigma}[H]$ is closed.
\end{proof}

\begin{definition}$ $
\begin{enumerate}
    \item For $H \le \aute(\mathfrak{C})$, the relation $\equiv^H$ is the orbit equivalence relation such that for tuples $b, c$ in $\mathfrak{C}$, $b \equiv^H c$ if and only if there is $h \in H$ such that $h(b) = c$.
    \item For $H \le \aute(\Sigma)$, we will use the same notation $\equiv^H$ but it is confined to the tuples of realizations of $\Sigma$.
\end{enumerate}
\end{definition}

\begin{remark}\label{remark:orbit equiv rels}
Let $H \le \aute(\Sigma)$ and $c, d$ be tuples of realizations of $\Sigma$.
\begin{enumerate}
    \item If $H = \autfl(\Sigma, \bs{e})$, then $c \equiv^H d$ if and only if $c \lequiv_{\bs{e}} d$.
    \item If $H = \autfkp(\Sigma, \bs{e})$, then $c \equiv^H d$ if and only if $c \kpequiv_{\bs{e}} d$.
    \item If $H = \autfsh(\Sigma, \bs{e})$, then $c \equiv^H d$ if and only if $c \sequiv_{\bs{e}} d$.
\end{enumerate}
In other words, in $\Sigma$, having the same strong type is the same as the orbit equivalence relation induced from the corresponding relativized automorphism group.
From Definition \ref{def:relativized Galois groups}(3) and the definition of the orbit equivalence, this remark can be proved in a direct way.
\end{remark}

Now we prove that closed subgroups of the relativized Lascar groups are completely determined by the orbit equivalence relations they define.

\begin{lemma}\label{lemma:finite characterization of orbit equiv}
Let $H$ be a subgroup of $\aute(\Sigma)$ containing $\autfl(\Sigma, \bs{e})$ such that $\pi_b[H]$ is a closed subgroup of $\gall(\Sigma, \bs{e})$.
Then
\begin{enumerate}
    \item For any tuples $c$ and $d$ of realizations of $\Sigma$, $c \equiv^H d$ if and only if for all corresponding subtuples $c'$ and $d'$ of $c$ and $d$, which are finite tuples of realizations of $\Sigma$, $c'\equiv^H d'$.
    % \item For any tuples $c$ and $d$ of realizations of $\Sigma$, $c \equiv^H d$ if and only if for all corresponding finite subtuples $c'$ and $d'$ of $c$ and $d$ respectively, $c'\equiv^H d'$.
    \item Given $\sigma\in \aute(\Sigma)$, the following are equivalent.
    \begin{enumerate}
    	\item $\sigma\in H$.
    	\item $\sigma$ fixes all the $\equiv^H$-classes of any tuples of realizations of $\Sigma$.
    	\item $\sigma$ fixes all the $\equiv^H$-classes of any finite tuples of realizations of $\Sigma$.
    \end{enumerate}
%\begin{align*}
%&H\\
%=&\{\sigma \in \aute(\Sigma):\sigma \mbox{ fixes all the} \equiv^H\mbox{-classes of any tuples of realizations of }\Sigma\}\\
%=& \{\sigma \in \aute(\Sigma):\sigma \mbox{ fixes all the} \equiv^H\mbox{-classes of any tuples of finitely many realizations of }\Sigma\}.
%\end{align*}    
%    $H = \{\sigma \in \aute(\Sigma):\sigma \mbox{ fixes all the} \equiv^H\mbox{-classes of any tuples of realizations of }\Sigma\}$
%    \begin{align*}
%    = \{\sigma \in \aute(\Sigma):\sigma \mbox{ fixes all the} \equiv^H\mbox{-classes of any tuples of finitely many realizations of }\Sigma\}.
%    \end{align*}
\end{enumerate}
Also, (2)(a) and (2)(b) are equivalent even when $\pi_b[H]$ is not closed.
\end{lemma}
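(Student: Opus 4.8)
The plan is to reduce everything to Proposition~\ref{prop:description of closed subgroup} together with Fact~\ref{fact:small tuple enough} and the compactness-style argument already used there. First I would settle the equivalence of (2)(a) and (2)(b) that holds even without closedness: if $\sigma\in H$ then for any tuple $c$ of realizations of $\Sigma$ the element $\sigma(c)$ lies in the $H$-orbit of $c$ by definition, so $\sigma$ fixes the $\equiv^H$-class of $c$; conversely, if $\sigma$ fixes the $\equiv^H$-class of every tuple of realizations of $\Sigma$, then in particular $\sigma$ fixes the $\equiv^H$-class of a Lascar tuple $b$ with $\bs{e}\in\dcl(b)$, i.e.\ there is $h\in H$ with $h(b)=\sigma(b)$, whence $h^{-1}\sigma\in\aut_b(\Sigma)=\aut_{b\bs{e}}(\Sigma)\le\autfl(\Sigma,\bs{e})\le H$, so $\sigma\in H$. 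This last chain is exactly the argument appearing in Remark~\ref{remark:e in dcl(b) is necessary}, and it uses only $\autfl(\Sigma,\bs{e})\le H$, not closedness of $\pi_b[H]$.

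Next, assuming $\pi_b[H]$ is closed, I would invoke Proposition~\ref{prop:description of closed subgroup}: lifting $H$ along $\xi$, the group $\xi^{-1}[H]$ satisfies condition (1) of that proposition (its image is closed and it is $\pi_\Sigma$-saturated since $\autfl(\Sigma,\bs{e})\le H$), hence $\xi^{-1}[H]=\aut_{c_F\bs{e}}(\mathfrak{C})$ for some $\emptyset$-type-definable $F$ with $c_F\in\bdd(\bs{e})$ and $c$ a tuple of realizations of $\Sigma$; moreover, as in the proof of $(2)\Rightarrow(1)$ there, $H=\{\,f\!\upharpoonright\!\Sigma(\mathfrak{C}):f(c)\models F(z,c)\,\}$ and we may take the Lascar tuple $b$ to contain $c$. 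This gives a concrete description of $\equiv^H$ on tuples of realizations of $\Sigma$: using an automorphism to move $c$ inside any given tuple, $c_1\equiv^H c_2$ is controlled by the type-definable condition "$F$ holds between the $c$-parts". For part (1), the forward direction is trivial (restrict the witnessing $h$). For the converse, given that all finite corresponding subtuples $c',d'$ satisfy $c'\equiv^H d'$, I would run the same compactness argument as in Fact~\ref{fact:small tuple enough}(2): because $c'\equiv^H d'$ is, via the description above, a type-definable condition on the finite pieces (equivalently, expressible by the partial type $F(z,c)$ pulled back), finite satisfiability plus compactness upgrades it to $c\equiv^H d$, producing an automorphism of $\mathfrak{C}$ fixing $c_F\bs{e}$ that sends $c$ to $d$, whose restriction to $\Sigma(\mathfrak{C})$ lies in $H$.

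Finally, part (2): (a)$\Leftrightarrow$(b) was done above without closedness; (b)$\Rightarrow$(c) is immediate; and (c)$\Rightarrow$(b) follows from part (1), since if $\sigma$ fixes every $\equiv^H$-class of a finite tuple of realizations of $\Sigma$, then for an arbitrary tuple $c$ of realizations of $\Sigma$ every finite corresponding subtuple $c'$ satisfies $c'\equiv^H\sigma(c')$, so by (1) $c\equiv^H\sigma(c)$; taking $c=b$ a Lascar tuple with $\bs{e}\in\dcl(b)$ then gives $\sigma\in H$ as in the first paragraph. The main obstacle I anticipate is the bookkeeping in the converse of part~(1): one must check carefully that "$c'\equiv^H d'$ for all finite corresponding subtuples" really is equivalent to a single partial type over the relevant parameters whose realization yields the full $\equiv^H$-relation — this is where the explicit form $H=\aut_{c_F\bs{e}}(\mathfrak{C})$ and the freedom (from Remark~\ref{rem:basic properties on Lascar tuples}) to absorb $c$ into the Lascar tuple are essential, and one has to make sure the compactness step takes place inside $\aute(\mathfrak{C})$ before restricting to $\Sigma(\mathfrak{C})$.
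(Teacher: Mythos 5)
Your proposal is correct and follows essentially the same route as the paper: both apply Proposition~\ref{prop:description of closed subgroup} to $\pi_\Sigma^{-1}[\pi_b[H]]=\xi^{-1}[H]$ to write $H=\aut_{\bs{e}'\bs{e}}(\Sigma)$ for some $\bs{e}'\in\bdd(\bs{e})$ with a representative in $\Sigma(\mathfrak{C})$, deduce (1) from type-definability of $\equiv_{\bs{e}'\bs{e}}$ together with compactness, and obtain (2) and the final claim from the Lascar-tuple inclusion $\aut_b(\Sigma)\le\autfl(\Sigma,\bs{e})\le H$. The only cosmetic difference is that the paper states the type-definable condition directly as $\tp(c/\bs{e}'\bs{e})=\tp(d/\bs{e}'\bs{e})$ rather than via your somewhat vaguer ``$F$ holds between the $c$-parts.''
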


\begin{proof}
(1): Suppose that for all corresponding subtuples $c'$ and $d'$ of $c$ and $d$, which are finite tuples of realizations of $\Sigma$, $c'\equiv^H d'$.
Note that, by Proposition \ref{prop:description of closed subgroup}, $H' := \pi_{\Sigma}^{-1}[\pi_b[H]] = \aut_{\bs{e}'\bs{e}}(\mathfrak{C})$ for some hyperimaginary $\bs{e}' \in \bdd(\bs{e})$ such that one of its representatives is a tuple of realizations of $\Sigma$.
Then by commutativity of the diagram in Remark \ref{remark:diagram of maps}, $H = \xi[H'] = \aut_{\bs{e}'\bs{e}}(\Sigma)$ and so $c \equiv^H d$ if and only if $\tp(c/\bs{e}'\bs{e}) = \tp(d/\bs{e}'\bs{e})$.
Since $\tp(c'/\bs{e}'\bs{e})=\tp(d'/\bs{e}'\bs{e})$ for all corresponding subtuples $c'$ and $d'$ of $c$ and $d$, which are finite tuples of realizations of $\Sigma$, by compactness, we have that $\tp(c/\bs{e}'\bs{e})=\tp(d/\bs{e}'\bs{e})$ and so $c \equiv^H d$.
% Since $\tp(c'/\bs{e}'\bs{e})=\tp(d'/\bs{e}'\bs{e})$ for all corresponding finite subtuples $c'$ and $d'$ of $c$ and $d$ respectively, by compactness, we have that $\tp(c/\bs{e}'\bs{e})=\tp(d/\bs{e}'\bs{e})$ and so $c \equiv^H d$.

(2): It is enough to show that $(c)$ implies $(a)$. Take arbitrary $\sigma\in \aute(\Sigma)$ and suppose $\sigma$ fixes all the $\equiv^H$-classes of any finite tuples of realizations of $\Sigma$. For any subtuple $b'$ of $b$, which is a finite tuple of realizations of $\Sigma$, $b' \equiv^H \sigma(b')$, and so by (1), $b \equiv^H \sigma(b)$.
% For any finite subtuple $b'$ of $b$, $b' \equiv^H \sigma(b')$, and so by (1), $b \equiv^H \sigma(b)$.
Thus there is $\tau \in H$ such that $\tau(b) = \sigma(b)$ and so $\tau^{-1}\sigma(b) = b$.
Then we have
$$\tau^{-1} \sigma \in \aut_b(\Sigma) \le \autfl(\Sigma, \bs{e}) \le H,$$
and conclude that $\sigma \in \tau H = H$.

The last line of Lemma \ref{lemma:finite characterization of orbit equiv} can be proved in a similar way.
%
% It is enough to show that 
%\begin{align*}
%H \supseteq \{\sigma \in \aute(\Sigma):\sigma \mbox{ fixes all the} \equiv^H\mbox{-classes of any tuples of finitely many realizations of }\Sigma\}.
%\end{align*}
%Let $\sigma$ be an element on the right.
%For any subtuple $b'$ of $b$, which is a tuple of finitely many realizations of $\Sigma$, $b' \equiv^H \sigma(b')$, and so by (1), $b \equiv^H \sigma(b)$.
%Thus there is $\tau \in H$ such that $\tau(b) = \sigma(b)$ and $\tau^{-1}\sigma(b) = b$.
%We have
%$$\tau^{-1} \sigma \in \aut_b(\Sigma) \le \autfl(\Sigma, \bs{e}) \le H,$$
%and conclude that $\sigma \in \tau H = H$.
\end{proof}

\begin{definition}\label{definition: canonical sugbroups}$ $
\begin{enumerate}
    \item $\gal_{\LL}^c(\Sigma, \bs{e})$ is the topological closure of the trivial subgroup of $\gall(\Sigma, \bs{e})$.
    \item $\gal_{\LL}^0(\Sigma, \bs{e})$ is the connected component containing the identity in $\gall(\Sigma, \bs{e})$.
\end{enumerate}
\end{definition}
\noindent Note that  $\gal_{\LL}^c(\Sigma, \bs{e})$ and $\gal_{\LL}^0(\Sigma, \bs{e})$ are closed normal subgroups because $\gall(\Sigma, \bs{e})$ is a topological group.

\begin{lemma}\label{lemma:kp and sh are closed}$ $
$\autfkp(\Sigma, \bs{e}) / \autfl(\Sigma, \bs{e})$ and $\autfsh(\Sigma, \bs{e}) / \autfl(\Sigma, \bs{e})$ are closed in $\gall(\Sigma, \bs{e})$.
\end{lemma}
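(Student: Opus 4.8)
The plan is to show that both $\autfkp(\Sigma,\bs{e})/\autfl(\Sigma,\bs{e})$ and $\autfsh(\Sigma,\bs{e})/\autfl(\Sigma,\bs{e})$ arise as $\pi_{\Sigma}[H]$ for a suitable subgroup $H\le\aute(\mathfrak{C})$ satisfying condition (1) of Proposition \ref{prop:description of closed subgroup}, hence are closed. The natural candidates are $H_{\KP}=\aut_{\bdd(\bs{e})\cap\Sigma}(\mathfrak{C})$ and $H_{\SH}=\aut_{\acl(\bs{e})\cap\Sigma}(\mathfrak{C})$, where by $\bdd(\bs{e})\cap\Sigma$ I mean (the hyperimaginary coding) the set of all bounded-over-$\bs{e}$ hyperimaginaries having a representative among tuples of realizations of $\Sigma$; here is where I would invoke Remark \ref{remark:bdd(e) cap Sigma} for the precise formulation. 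First I would record that $\bdd(\bs{e})\cap\Sigma$ is itself (interdefinable over $\bs{e}$ with) a single hyperimaginary $\bs{e}'\in\bdd(\bs{e})$ that is a tuple of realizations of $\Sigma$ — this is essentially Fact \ref{fact:countable hyperimagianry is enough} applied to the relevant small family, together with the observation that a product of tuples of realizations of $\Sigma$ is again a tuple of realizations of $\Sigma$. Thus $H_{\KP}=\aut_{\bs{e}'\bs{e}}(\mathfrak{C})$ meets clause (2) of Proposition \ref{prop:description of closed subgroup}, so $\pi_{\Sigma}[H_{\KP}]$ is closed in $\gall(\Sigma,\bs{e})$ and $H_{\KP}=\pi_{\Sigma}^{-1}[\pi_{\Sigma}[H_{\KP}]]$; the same reasoning handles $H_{\SH}$.

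The remaining and main point is the identification $\xi[H_{\KP}]=\autfkp(\Sigma,\bs{e})$ and $\xi[H_{\SH}]=\autfsh(\Sigma,\bs{e})$, together with the fact that passing through $\xi$ does not change the image in $\gall(\Sigma,\bs{e})$, i.e. $\pi_{\Sigma}[H_{\KP}]=\pi'[\autfkp(\Sigma,\bs{e})]=\autfkp(\Sigma,\bs{e})/\autfl(\Sigma,\bs{e})$ and likewise for $\SH$. For the inclusion $\xi[H_{\KP}]\subseteq\autfkp(\Sigma,\bs{e})$: if $f\in\aut_{\bs{e}'\bs{e}}(\mathfrak{C})$ and $c$ is any tuple of realizations of $\Sigma$, then $c_{=}$ contributes to (is coded in) $\bs{e}'$ only after quotienting by the relevant bounded equivalence relation, but by Fact \ref{fact:characterization of equivalence relations}(2) the statement $c\kpequiv_{\bs{e}}f(c)$ is exactly $c\equiv_{\bdd(\bs{e})}f(c)$, which follows once $f$ fixes every bounded-over-$\bs{e}$ hyperimaginary with a representative in $\Sigma(\mathfrak{C})$; this is precisely what membership in $\aut_{\bdd(\bs{e})\cap\Sigma}(\mathfrak{C})$ gives, using Fact \ref{fact:small tuple enough}(2) to reduce to tuples of realizations of $\Sigma$. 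For the reverse inclusion: if $f\in\aute(\mathfrak{C})$ with $\xi(f)\in\autfkp(\Sigma,\bs{e})$, then $f(c)\kpequiv_{\bs{e}}c$ for every tuple $c$ of realizations of $\Sigma$, so in particular $f$ fixes the $\bdd(\bs{e})$-type, hence the $\bs{e}'$-coordinate, of every such $c$, whence $f\in\aut_{\bs{e}'\bs{e}}(\mathfrak{C})=H_{\KP}$. The $\SH$ case is identical with $\acl(\bs{e})$ and Fact \ref{fact:characterization of equivalence relations}(3) replacing $\bdd(\bs{e})$ and (2).

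Finally I would note that since $\autfl(\Sigma,\bs{e})\le\autfkp(\Sigma,\bs{e})\le\aute(\Sigma)$ and $\autfl(\Sigma,\bs{e})=\ker\pi'$, we have $\pi'[\autfkp(\Sigma,\bs{e})]=\autfkp(\Sigma,\bs{e})/\autfl(\Sigma,\bs{e})$ and by the commutative diagram in Remark \ref{remark:diagram of maps}, $\pi'\circ\xi=\pi_{\Sigma}$, so $\pi_{\Sigma}[H_{\KP}]=\pi'[\xi[H_{\KP}]]=\pi'[\autfkp(\Sigma,\bs{e})]=\autfkp(\Sigma,\bs{e})/\autfl(\Sigma,\bs{e})$; closedness of the left-hand side from Proposition \ref{prop:description of closed subgroup} then yields closedness of the right-hand side, and symmetrically for $\SH$. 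The step I expect to be the main obstacle is pinning down the precise hyperimaginary ``$\bdd(\bs{e})\cap\Sigma$'' and verifying $\xi[H_{\KP}]=\autfkp(\Sigma,\bs{e})$ carefully in the hyperimaginary setting — in particular making sure that fixing all bounded hyperimaginaries with a representative \emph{in} $\Sigma(\mathfrak{C})$ really is enough to force $c\equiv_{\bdd(\bs{e})}f(c)$ for every tuple $c$ of realizations of $\Sigma$, which is where Fact \ref{fact:e-hi is empty hi} and the $\emptyset$-type-definability bookkeeping have to be used with care. Everything else is a routine transfer along the commutative diagram.
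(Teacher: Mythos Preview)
Your approach is correct in outline but takes a substantially longer route than the paper, and one step is looser than you indicate.

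The paper's proof is a direct two--line computation at the level of the type space: since the topology on $\gall(\Sigma,\bs{e})$ is the quotient topology via $\nu_b$, it suffices to check that
\[
\nu_b^{-1}\bigl[\autfkp(\Sigma,\bs{e})/\autfl(\Sigma,\bs{e})\bigr]=\{\tp(c/b):c\kpequiv_{\bs{e}}b\}
\]
is closed in $S_b(b)$. But $\kpequiv_{\bs{e}}$ coincides with $\equiv_{\bdd(\bs{e})}$, which is type-definable, and since $\bs{e}\in\dcl(b)$ this type-definability can be expressed over $b$; closedness follows immediately. The $\SH$ case is identical with $\acl(\bs{e})$. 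No appeal to Proposition~\ref{prop:description of closed subgroup} is needed.

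Your route via Proposition~\ref{prop:description of closed subgroup} does work, but it is considerably heavier: the identification $\xi[H_{\KP}]=\autfkp(\Sigma,\bs{e})$ together with $H_{\KP}=\aut_{\bdd(\bs{e})\cap\Sigma}(\mathfrak{C})$ is essentially the content of Lemma~\ref{lemma:preimages are bdd and acl} and half of Proposition~\ref{prop:id clousre, component finest}, which in the paper come \emph{after} the present lemma and build on it. The one genuinely loose step is the claim that $\bdd(\bs{e})\cap\Sigma$ is interdefinable over $\bs{e}$ with a single $\bs{e}'\in\bdd(\bs{e})$ having a representative in $\Sigma(\mathfrak{C})$: Fact~\ref{fact:countable hyperimagianry is enough} codes the small set $\bdd(\bs{e})\cap\Sigma$ by a single hyperimaginary, but the individual countable pieces $\bs{c}_i$ need not have representatives in $\Sigma$, so ``concatenate the representatives'' does not apply directly. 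A clean fix is to bypass $H_{\KP}$ and set $H=\xi^{-1}[\autfkp(\Sigma,\bs{e})]$; one then checks, using that $b$ is a Lascar tuple with $\bs{e}\in\dcl(b)$, that $H=\{f\in\aute(\mathfrak{C}):f(b)\kpequiv_{\bs{e}}b\}$, and via Fact~\ref{fact:e-hi is empty hi} this equals $\aut_{\bs{e}'\bs{e}}(\mathfrak{C})$ for the explicit $\bs{e}'=(bb)_{F'}\in\bdd(\bs{e})\restriction\Sigma$. With that single $\bs{e}'$ in hand, Proposition~\ref{prop:description of closed subgroup} applies and your argument goes through. So your strategy can be completed, but the paper's direct type-space computation is both shorter and logically prior to the machinery you invoke.
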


\begin{proof}
We have
\begin{align*}
\nu_b^{-1} [\autfkp(\Sigma, \bs{e}) / \autfl(\Sigma, \bs{e})] & = \{\tp(\sigma(b)/b): \sigma \in \autfkp(\Sigma)\}\\
& = \{\tp(c/b): c \kpequiv_{\bs{e}} b\}.
\end{align*}
By Fact \ref{fact:characterization of equivalence relations}(2) and Remark \ref{rem:type-definabilityt_equility of types}, $\kpequiv_{\bs{e}}$ is the same as $\equiv_{\bdd(\bs{e})}$, which is $\bs{e}$-invariant and type-definable.
By the assumption that $\bs{e} \in \dcl(b)$, there is $\Gamma(x, b)$ over $b$, whose solution set is the same as the solution set of the type $\tp(b/\bdd(\bs{e}))$, and thus $\{\tp(c/b): c \kpequiv_{\bs{e}} b\} = \{p(x) \in S_b(b): \models \Gamma(x, b)\}$.
Hence $\{\tp(c/b): c \kpequiv_{\bs{e}} b\}$ is closed in $S_b(b)$, and this implies that $\autfkp(\Sigma, \bs{e}) / \autfl(\Sigma, \bs{e})$ is closed.
The proof for $\autfsh(\Sigma, \bs{e}) / \autfl(\Sigma, \bs{e})$ is exactly the same;
replace KP by S and $\bdd(\bs{e})$ by $\acl(\bs{e})$.
% {\color{red}replace $\xi_{\mbox{KP},\Sigma}$ by $\eta_{\mbox{S},\Sigma}\circ \eta_{\mbox{KP},\Sigma}$ and KP by S}.
\end{proof}

\begin{remark}\label{remark:bdd(e) cap Sigma}
Using Fact \ref{fact:countable hyperimagianry is enough}, we will relativize Definition \ref{def:dcl, acl, bdd} and Definition/Remark \ref{def/rem: dcl acl bdd} to $\Sigma$.
\begin{enumerate}
    \item If $\bs{c}$ is a hyperimaginary bounded over $\bs{e}$ and one of its representatives is a (small) tuple of realizations of $\Sigma$, then we express it by $\bs{c} \in \bdd(\bs{e}) \restriction \Sigma$.
    Note that Proposition \ref{prop:description of closed subgroup}(2) is the same as $H = \aut_{\bs{e}'\bs{e}}(\mathfrak{C})$ for some hyperimaginary $\bs{e}' \in \bdd(\bs{e}) \restriction \Sigma$.
    Define $\bs{c} \in \acl(\bs{e}) \restriction \Sigma$ similarly.
    \item For any $\bs{c} \in \bdd(\bs{e}) \restriction \Sigma$, by Fact \ref{fact:countable hyperimagianry is enough}, there is a sequence $(\bs{c}_i: i < \lambda_{\bs{c}})$ of countable hyperimaginaries, which is interdefinable with $\bs{c}$ (where $\lambda_{\bs{c}}$ is some small cardinal).
    Note that $\{\bs{c}_i: i < \lambda_{\bs{c}}\} \subseteq \bdd(\bs{e})$.
    Thus
    $$\bdd(\bs{e}) \cap \Sigma := \bigcup_{\bs{c} \in \bdd(\bs{e}) \restriction \Sigma} \{\bs{c}_i: i < \lambda_{\bs{c}}\}$$
    is a subset of $\bdd(\bs{e})$, and given $f \in \aute(\mathfrak{C})$, $f$ fixes every $\bs{c} \in \bdd(\bs{e}) \restriction \Sigma$ if and only if $f \in \aut_{\bdd(\bs{e}) \cap \Sigma}(\mathfrak{C})$.
    Define $\acl(\bs{e}) \cap \Sigma$ in the same way with $\bs{c} \in \acl(\bs{e}) \restriction \Sigma$.
\end{enumerate}

\end{remark}

% {\color{blue}
% \begin{notation}\label{notation:bdd(e) cap Sigma}
% Let $\bs{c}$ be any hyperimaginary.
% \begin{enumerate}
%     \item Let $\bdd(\bs{e}) \cap \Sigma$ be a {\em small set} of hyperimaginaries, so that $\bs{c}$ is an element of $\bdd(\bs{e}) \cap \Sigma$ if and only if one of the representatives of $\bs{c}$ is a countable tuple of realizations of $\Sigma$.
%     \item By $\bs{c} \in \bdd(\bs{e}) \cap \Sigma$, we mean that $\bs{c}$ is bounded over $\bs{e}$ and one of its representatives is a tuple of realizations of $\Sigma$.
%     \item Define similary $\acl(\bs{e}) \cap \Sigma$.
%     % \item Let $\bdd(\bs{e})\cap \Sigma$ be the set of all hyperimaginaries bounded over $\bs{e}$, whose representative is a countable tuple of realizations of $\Sigma$.
%     % \item Let $\acl(\bs{e})\cap \Sigma$ be the set of all hyperimaginaries algebraic over $\bs{e}$, whose representative is a countable tuple of realizations of $\Sigma$.
% \end{enumerate}
% \end{notation}
% }

% {\color{blue}
% \begin{remark}\label{bdd(e) cap Sigma is well-defined}
% Notation \ref{notation:bdd(e) cap Sigma} should remind the reader of Fact \ref{fact:countable hyperimagianry is enough}, Definition \ref{def:dcl, acl, bdd}, and Definition/Remark \ref{def/rem: dcl acl bdd}.
% \begin{enumerate}
%     \item Any hyperimaginary with one representative in $\Sigma(\mathfrak{C})$ is interdefinable with a sequence of 
% \end{enumerate}
% \end{remark}
% }

\begin{remark}\label{remark: intersection is well-behaved}
Let $\{\bs{e}_i: i < \lambda\}$ be a small set of arbitrary hyperimaginaries.
According to Definition \ref{def:relativized Galois groups}(1), $\sigma \in \aut_{\bs{e}_i}(\Sigma) = \xi[\aut_{\bs{e}_i}(\mathfrak{C})]$ does {\em not} mean that $\sigma$ fixes $\bs{e}_i$ (because $\bs{e}_i \subseteq \Sigma(\mathfrak{C})$ is not assumed), and it is not clear whether
$$\bigcap_{i < \lambda} \aut_{\bs{e}_i}(\Sigma) = \aut_{\{\bs{e}_i: i < \lambda\}}(\Sigma) \ \ (\dagger)$$
holds or not (note that $\bigcap_{i < \lambda} \aut_{\bs{e}_i}(\Sigma) = \bigcap_{i<\lambda} \xi[\aut_{\bs{e}_i}(\mathfrak{C})] \supseteq \xi[\bigcap_{i<\lambda}\aut_{\bs{e}_i}(\mathfrak{C})] = \aut_{\{\bs{e}_i: i < \lambda\}}(\Sigma)$, thus one direction of the containment is trivial).

But in the case where $\bs{e}_i$ is a hyperimaginary such that one of the representatives of $\bs{e}_i$ is a tuple of realizations of $\Sigma$ for each $i < \lambda$, $(\dagger)$ holds.
\end{remark}

\begin{proof}
Say $a_i$ is one representative of $\bs{e}_i$ in $\Sigma(\mathfrak{C})$.
If $\sigma \in \bigcap_{i < \lambda} \aut_{\bs{e}_i}(\Sigma)$, then for each $i < \lambda$, $\sigma(a_i)$ is still a representative of $\bs{e}_i$ in $\Sigma(\mathfrak{C})$, thus any extension of $\sigma$ should fix $\bs{e}_i$.
That is, any extension $f \in \aute(\mathfrak{C})$ of $\sigma$ fixes every hyperimaginary in $\{\bs{e}_i: i < \lambda\}$, hence $\sigma \in \aut_{\{\bs{e}_i: i < \lambda\}}(\Sigma)$.
\end{proof}

\begin{lemma}\label{lemma:preimages are bdd and acl}
$$\pi_b^{-1}[\gal^c_{\LL}(\Sigma, \bs{e})] = \aut_{\bdd(\bs{e}) \cap \Sigma}(\Sigma)\text{, and}$$
$$\pi_b^{-1}[\gal^0_{\LL}(\Sigma, \bs{e})] = \aut_{\acl(\bs{e}) \cap \Sigma}(\Sigma).$$
% = \bigcap_{{\bs{e'} \in \bdd(\bs{e}) \cap \Sigma}} \aut_{\bs{e'}\bs{e}}(\Sigma) 
\end{lemma}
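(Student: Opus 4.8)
The plan is to realize $\gal_{\LL}^c(\Sigma,\bs{e})$ and $\gal_{\LL}^0(\Sigma,\bs{e})$ as intersections of, respectively, all closed and all open subgroups of $\gall(\Sigma,\bs{e})$, to identify these families of subgroups via Proposition \ref{prop:description of closed subgroup}, and then to transport the descriptions along the commutative diagram of Remark \ref{remark:diagram of maps}. The first task is to pin down the closed and open subgroups of $\gall(\Sigma,\bs{e})$. If $\bs{c}\in\bdd(\bs{e})\restriction\Sigma$, then by the implication $(2)\Rightarrow(1)$ of Proposition \ref{prop:description of closed subgroup}, $\pi_\Sigma[\aut_{\bs{c}\bs{e}}(\mathfrak{C})]$ is a closed subgroup of $\gall(\Sigma,\bs{e})$ and $\pi_\Sigma^{-1}[\pi_\Sigma[\aut_{\bs{c}\bs{e}}(\mathfrak{C})]]=\aut_{\bs{c}\bs{e}}(\mathfrak{C})$; conversely, if $K\leq\gall(\Sigma,\bs{e})$ is closed, then $H:=\pi_\Sigma^{-1}[K]$ satisfies condition (1) of Proposition \ref{prop:description of closed subgroup} (as $\pi_\Sigma$ is surjective), so $K=\pi_\Sigma[\aut_{\bs{c}\bs{e}}(\mathfrak{C})]$ for some $\bs{c}\in\bdd(\bs{e})\restriction\Sigma$. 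Hence the closed subgroups of $\gall(\Sigma,\bs{e})$ are exactly the $\pi_\Sigma[\aut_{\bs{c}\bs{e}}(\mathfrak{C})]$ with $\bs{c}\in\bdd(\bs{e})\restriction\Sigma$. Since $\ker\pi_\Sigma\leq\aut_{\bs{c}\bs{e}}(\mathfrak{C})$, the index of $\pi_\Sigma[\aut_{\bs{c}\bs{e}}(\mathfrak{C})]$ in $\gall(\Sigma,\bs{e})$ equals the number of conjugates of $\bs{c}$ over $\bs{e}$, which is finite exactly when $\bs{c}\in\acl(\bs{e})$; and in a quasi-compact topological group a subgroup is open iff it is closed of finite index. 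Therefore the open subgroups of $\gall(\Sigma,\bs{e})$ are exactly the $\pi_\Sigma[\aut_{\bs{c}\bs{e}}(\mathfrak{C})]$ with $\bs{c}\in\acl(\bs{e})\restriction\Sigma$.

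Granting for the moment that $\gal_{\LL}^0(\Sigma,\bs{e})$ equals the intersection of all open subgroups of $\gall(\Sigma,\bs{e})$ (see below), the identities follow. Indeed, $\gal_{\LL}^c(\Sigma,\bs{e})$ is closed and contained in every closed subgroup, so it is the intersection of all closed subgroups; using that preimages commute with intersections, the previous paragraph, and Remark \ref{remark:bdd(e) cap Sigma}(2) together with its $\acl$-analogue, we obtain
$$\pi_\Sigma^{-1}[\gal_{\LL}^c(\Sigma,\bs{e})]=\bigcap_{\bs{c}\in\bdd(\bs{e})\restriction\Sigma}\aut_{\bs{c}\bs{e}}(\mathfrak{C})=\aut_{\bdd(\bs{e})\cap\Sigma}(\mathfrak{C}),\qquad \pi_\Sigma^{-1}[\gal_{\LL}^0(\Sigma,\bs{e})]=\aut_{\acl(\bs{e})\cap\Sigma}(\mathfrak{C}).$$
Since $\pi_\Sigma=\pi_b\circ\xi$ with $\xi$ surjective, we have $\pi_b^{-1}[\,\cdot\,]=\xi[\pi_\Sigma^{-1}[\,\cdot\,]]$; applying $\xi$ to the two equalities above and invoking Remark \ref{remark: intersection is well-behaved} to rewrite $\xi[\aut_{\bdd(\bs{e})\cap\Sigma}(\mathfrak{C})]$ as $\aut_{\bdd(\bs{e})\cap\Sigma}(\Sigma)$ (and likewise for $\acl$) yields the two asserted identities.

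It remains to verify that $\gal_{\LL}^0(\Sigma,\bs{e})$ is the intersection of all open subgroups of $\gall(\Sigma,\bs{e})$; this is the only step where genuine topology is used. As $\gall(\Sigma,\bs{e})$ is quasi-compact, the quotient $G':=\gall(\Sigma,\bs{e})/\gal_{\LL}^c(\Sigma,\bs{e})$ is a compact Hausdorff group, and its identity component is $\gal_{\LL}^0(\Sigma,\bs{e})/\gal_{\LL}^c(\Sigma,\bs{e})$ (using that $\gal_{\LL}^c(\Sigma,\bs{e})=\overline{\{[\id]\}}$ is connected, so that a saturated preimage mapping onto a connected set is connected). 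By the structure theory of compact groups, in $G'$ the identity component equals the intersection of all open subgroups (equivalently, $G'$ modulo its identity component is profinite); and since every open subgroup of $\gall(\Sigma,\bs{e})$ is clopen and contains $[\id]$, it contains $\gal_{\LL}^c(\Sigma,\bs{e})$, so the open subgroups of $G'$ are precisely the images of the open subgroups of $\gall(\Sigma,\bs{e})$. Pulling this equality back along the quotient map $\gall(\Sigma,\bs{e})\to G'$ gives $\gal_{\LL}^0(\Sigma,\bs{e})=\bigcap\{U : U\text{ an open subgroup of }\gall(\Sigma,\bs{e})\}$, as needed. The main obstacle is exactly this last step: the bulk of the argument is bookkeeping around Proposition \ref{prop:description of closed subgroup} and Remark \ref{remark:bdd(e) cap Sigma}, but detecting the identity component by open subgroups forces one to pass to the Hausdorff quotient by $\gal_{\LL}^c(\Sigma,\bs{e})$ and use compact-group structure theory, and some care is required because $\gall(\Sigma,\bs{e})$ itself need not be Hausdorff.
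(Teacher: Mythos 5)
Your proof is correct and follows the same overall strategy as the paper: classify the closed subgroups of $\gall(\Sigma,\bs{e})$ via Proposition \ref{prop:description of closed subgroup}, write $\gal_{\LL}^c(\Sigma,\bs{e})$ and $\gal_{\LL}^0(\Sigma,\bs{e})$ as intersections of such subgroups, and pull back. Two points of execution differ. First, you carry out the preimage computation at the level of $\aute(\mathfrak{C})$, using the identity $\pi_{\Sigma}^{-1}[\pi_{\Sigma}[\aut_{\bs{c}\bs{e}}(\mathfrak{C})]]=\aut_{\bs{c}\bs{e}}(\mathfrak{C})$ supplied by condition (1) of Proposition \ref{prop:description of closed subgroup}, and only apply $\xi$ once at the very end; the paper instead computes $\pi_b^{-1}$ of each term of the intersection, which requires the separate claim that $\autfl(\Sigma,\bs{e})\le\aut_{\bs{e}'\bs{e}}(\Sigma)$ and an appeal to Remark \ref{remark: intersection is well-behaved} to identify the resulting intersection of relativized stabilizers. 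Your order of operations sidesteps both, since intersections of pointwise stabilizers inside $\aute(\mathfrak{C})$ are unproblematic and the final $\xi$-image is $\aut_{\bdd(\bs{e})\cap\Sigma}(\Sigma)$ essentially by definition. Second, for the identity component the paper cites the classical fact that the identity component is the intersection of the closed finite-index subgroups, whereas you prove the equivalent open-subgroup characterization by passing to the compact Hausdorff quotient by $\gal_{\LL}^c(\Sigma,\bs{e})$ and using that a compact group modulo its identity component is profinite; this is more self-contained and explicitly addresses the fact that $\gall(\Sigma,\bs{e})$ need only be quasi-compact, not Hausdorff. Both routes land on the same family $\{\pi_{\Sigma}[\aut_{\bs{c}\bs{e}}(\mathfrak{C})]:\bs{c}\in\acl(\bs{e})\restriction\Sigma\}$, since in a quasi-compact group the open subgroups are exactly the closed subgroups of finite index, and your index computation via $\ker\pi_{\Sigma}\le\aut_{\bs{c}\bs{e}}(\mathfrak{C})$ is the same as the paper's.
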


\begin{proof}
By the definition, $\gal^c_{\LL}(\Sigma, \bs{e})$ is the intersection of all closed subgroups of $\gall(\Sigma, \bs{e})$.
But by Proposition \ref{prop:description of closed subgroup}, the collection of all closed subgroups of $\gall(\Sigma, \bs{e})$ is the same as $\{\pi_{\Sigma}(\aut_{\bs{e}'\bs{e}}(\mathfrak{C})): \bs{e}' \in \bdd(\bs{e}) \restriction \Sigma\}$.
Thus we have
$$\gal^c_{\LL}(\Sigma, \bs{e}) = \bigcap_{\bs{e}' \in \bdd(\bs{e}) \restriction \Sigma} \pi_{\Sigma}[\aut_{\bs{e}'\bs{e}}(\mathfrak{C})].$$

\begin{claim}\label{claim:having the same Lascar in Sigma}
Let $\sigma \in \autfl(\Sigma, \bs{e})$ and $\bs{e}' \in \bdd(\bs{e}) \restriction \Sigma$.
Then for any extension $f \in \aute(\mathfrak{C})$ of $\sigma$, we have $f \in \aut_{\bs{e}'\bs{e}}(\mathfrak{C})$, which implies that $\sigma \in \aut_{\bs{e}' \bs{e}}(\Sigma)$.
In particular, it follows that $\autfl(\Sigma, \bs{e}) \le \aut_{\bs{e}'\bs{e}}(\Sigma)$.
\end{claim}

\begin{proof}[Proof of the claim]
Let $a' \in \Sigma(\mathfrak{C})$ be a representative of $\bs{e}'$ and $f \in \aute(\mathfrak{C})$ be any extension of $\sigma$.
Then $f(a') = \sigma(a') \lequiv_{\bs{e}} a'$, thus $f(\bs{e}') \lequiv_{\bs{e}} \bs{e}'$ and so $f(\bs{e}') \equiv_{\bdd(\bs{e})} \bs{e}'$ by Fact \ref{fact:characterization of equivalence relations}.
But $\bs{e}' \in \bdd(\bs{e})$, hence $f(\bs{e}') = \bs{e}'$.
\end{proof}

Therefore
\begin{align*}
\pi_b^{-1}[\gal^c_{\LL}(\Sigma, \bs{e})] & = \bigcap_{\bs{e}' \in \bdd(\bs{e}) \restriction \Sigma} \pi_b^{-1} [\pi_{\Sigma}[\aut_{\bs{e}'\bs{e}}(\mathfrak{C})]]\\
& = \bigcap_{\bs{e}' \in \bdd(\bs{e}) \restriction \Sigma} \xi[\aut_{\bs{e}'\bs{e}}(\mathfrak{C})]\\
& = \bigcap_{\bs{e}' \in \bdd(\bs{e}) \restriction \Sigma} \aut_{\bs{e}'\bs{e}}(\Sigma)\\
& = \aut_{\bdd(\bs{e}) \cap \Sigma}(\Sigma),
\end{align*}
where the second equality follows because
$$\pi_b^{-1} [\pi_{\Sigma}[\aut_{\bs{e}'\bs{e}}(\mathfrak{C})]] = \pi_b^{-1} [\pi_b \circ \xi[\aut_{\bs{e}'\bs{e}}(\mathfrak{C})]] = (\pi_b^{-1} \circ \pi_b) [\aut_{\bs{e}'\bs{e}}(\Sigma)]$$
and we have Claim \ref{claim:having the same Lascar in Sigma};
the last equality follows by Remark \ref{remark:bdd(e) cap Sigma} and \ref{remark: intersection is well-behaved}.
We have proved the first statement of this lemma.

For the second statement of this lemma, we prove a claim first:

\begin{claim}\label{claim:conn component is intersection}
$$\gal^0_{\LL}(\Sigma, \bs{e}) = \bigcap_{\bs{e}' \in \acl(\bs{e}) \restriction \Sigma} \pi_{\Sigma}[\aut_{\bs{e}' \bs{e}}(\mathfrak{C})].$$
\end{claim}

\begin{proof}[Proof of the claim]
The proof of this claim is essentially the same as the first part of the proof of \cite[Proposition 3.12(1)]{KL23}.
Recall the definition of $\gal^0_{\LL}(\Sigma, \bs{e})$ and the classical fact that the connected component containing the identity is the intersection of all closed (normal) subgroups of finite indices in a topological group.
Note that for any $\bs{e}' \in \acl(\bs{e}) \restriction \Sigma$, $\aut_{\bs{e}' \bs{e}}(\mathfrak{C})$ has a finite index in $\aute(\mathfrak{C})$, and hence $\pi_{\Sigma}[\aut_{\bs{e}' \bs{e}}(\mathfrak{C})]$ has a finite index in $\pi_{\Sigma}[\aute(\mathfrak{C})] = \gall(\Sigma, \bs{e})$;
$\pi_{\Sigma}[\aut_{\bs{e}' \bs{e}}(\mathfrak{C})]$ is also closed due to Proposition \ref{prop:description of closed subgroup}.
This proves that
$$\gal^0_{\LL}(\Sigma, \bs{e}) \subseteq \bigcap_{\bs{e}' \in \acl(\bs{e}) \restriction \Sigma} \pi_{\Sigma}[\aut_{\bs{e}' \bs{e}}(\mathfrak{C})].$$

On the other hand, by Proposition \ref{prop:description of closed subgroup}, every closed subgroup of $\gall(\Sigma, \bs{e})$ should be $\pi_{\Sigma}[\aut_{\bs{e}'\bs{e}}(\mathfrak{C})]$ for some $\bs{e}' \in \bdd(\bs{e}) \restriction \Sigma$.
If this $\pi_{\Sigma}[\aut_{\bs{e}'\bs{e}}(\mathfrak{C})]$ has a finite index in $\gall(\Sigma, \bs{e})$,
then $\aut_{\bs{e}'\bs{e}}(\mathfrak{C})$ has a finite index in $\pi_{\Sigma}^{-1}(\gall(\Sigma, \bs{e})) = \aute(\mathfrak{C})$, thus $\bs{e}' \in \acl(\bs{e}) \restriction \Sigma$.
This proves that
$$\gal^0_{\LL}(\Sigma, \bs{e}) \supseteq \bigcap_{\bs{e}' \in \acl(\bs{e}) \restriction \Sigma} \pi_{\Sigma}[\aut_{\bs{e}' \bs{e}}(\mathfrak{C})].$$
\end{proof}

By Claim \ref{claim:conn component is intersection},
\begin{align*}
\pi_b^{-1}[\gal^0_{\LL}(\Sigma, \bs{e})] & = \bigcap_{\bs{e}' \in \acl(\bs{e}) \restriction \Sigma} \pi_b^{-1} [\pi_{\Sigma}[\aut_{\bs{e}'\bs{e}}(\mathfrak{C})]]\\
& = \bigcap_{\bs{e}' \in \acl(\bs{e}) \restriction \Sigma} \xi[\aut_{\bs{e}'\bs{e}}(\mathfrak{C})]\\
& = \bigcap_{\bs{e}' \in \acl(\bs{e}) \restriction \Sigma} \aut_{\bs{e}'\bs{e}}(\Sigma)\\
& = \aut_{\acl(\bs{e}) \cap \Sigma}(\Sigma),
\end{align*}
where the second equality follows from Claim \ref{claim:having the same Lascar in Sigma} (as $\bs{e}' \in \acl(\bs{e}) \restriction \Sigma$ implies $\bs{e}' \in \bdd(\bs{e}) \restriction \Sigma$) with the same argument as before;
the last equality follows by Remark \ref{remark:bdd(e) cap Sigma} and \ref{remark: intersection is well-behaved}.

% By the essentially same proof of \cite[Proposition 3.12(1)]{KL23}, the assertions follow using Proposition \ref{prop:description of closed subgroup} and the fact that $\gal^c_{\LL}(\Sigma, \bs{e})$ is the intersection of all closed subgroups containing the identity and $\gal^0_{\LL}(\Sigma, \bs{e})$ is the intersection of all closed (normal) subgroups of finite indices in $\gall(\Sigma, \bs{e})$.
\end{proof}

\begin{proposition}\label{prop:id clousre, component finest} Let $c$ and $d$ be tuples of realizations of $\Sigma$.
\begin{enumerate}
    \item Let $H = \pi_b^{-1}[\gal_{\LL}^c(\Sigma, \bs{e})] \le \aute(\Sigma)$.
    Then $c \equiv^H d$ if and only if $c \kpequiv_{\bs{e}} d$.
    \item Let $H = \pi_b^{-1}[\gal_{\LL}^0(\Sigma, \bs{e})] \le \aute(\Sigma)$.
    Then $c \equiv^H d$ if and only if $c \sequiv_{\bs{e}} d$.
\end{enumerate}
\end{proposition}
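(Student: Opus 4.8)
The plan is to deduce both parts from Lemma \ref{lemma:preimages are bdd and acl}, which identifies $H$ explicitly: in (1) $H=\aut_{\bdd(\bs{e})\cap\Sigma}(\Sigma)$, and in (2) $H=\aut_{\acl(\bs{e})\cap\Sigma}(\Sigma)$. Write $R$ for $\bdd(\bs{e})$ in case (1) and for $\acl(\bs{e})$ in case (2); then $H=\aut_{R\cap\Sigma}(\Sigma)=\xi[\aut_{R\cap\Sigma}(\mathfrak{C})]$, by Fact \ref{fact:characterization of equivalence relations} the relation $\kpequiv_{\bs{e}}$ (resp. $\sequiv_{\bs{e}}$) on tuples of realizations of $\Sigma$ equals $\equiv_R$, and by Remark \ref{remark:bdd(e) cap Sigma} we have $R\cap\Sigma\subseteq R$ while an automorphism of $\mathfrak{C}$ fixes $R\cap\Sigma$ pointwise if and only if it fixes every hyperimaginary $\bs{c}\in R\restriction\Sigma$. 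So the whole proposition reduces to showing $\equiv^H$ coincides with $\equiv_R$ on tuples of realizations of $\Sigma$.

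The direction $c\equiv_R d\Rightarrow c\equiv^H d$ is easy: pick $g\in\aut_R(\mathfrak{C})$ with $g(c)=d$; since $R\cap\Sigma\subseteq R$ and $\bs{e}\in\bdd(\bs{e})$, $g$ lies in $\aute(\mathfrak{C})$ and fixes $R\cap\Sigma$ pointwise, so $\xi(g)\in H$ witnesses $c\equiv^H d$. For the converse $c\equiv^H d\Rightarrow c\equiv_R d$, choose $\sigma\in H$ with $\sigma(c)=d$ and an extension $f_0\in\aute(\mathfrak{C})$ of $\sigma$ fixing $R\cap\Sigma$ pointwise; then $f_0(c)=d$ and $f_0$ fixes every $\bs{c}\in R\restriction\Sigma$. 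To get $c\equiv_R d$ I will use the characterizations Fact \ref{fact:characterization of equivalence relations}(2)(c)/(3)(c): it suffices to verify $E'(c,d)$ for every $\bs{e}$-invariant type-definable equivalence relation $E'$ on $\mathfrak{C}^{|c|}$ that is bounded (case (1)), resp. for which $c_{E'}$ has only finitely many conjugates over $\bs{e}$ (case (2)). Fixing such an $E'$, and using the standing assumption to fix the Lascar tuple $b$ with $\bs{e}\in\dcl(b)$ (a tuple of realizations of $\Sigma$), apply Fact \ref{fact:e-hi is empty hi} to $E'$ and $b$ to get an $\emptyset$-type-definable $E''$ with $c_{E'}$ and $(cb)_{E''}$ interdefinable over $\bs{e}$. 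Interdefinability over $\bs{e}$ forces equal stabilizers in $\aute(\mathfrak{C})$, hence conjugate-orbits of equal cardinality, so $(cb)_{E''}$ is bounded over $\bs{e}$ (case (1)), resp. algebraic over $\bs{e}$ (case (2)); as $cb$ is a tuple of realizations of $\Sigma$, this gives $(cb)_{E''}\in R\restriction\Sigma$. Therefore $f_0$ fixes $(cb)_{E''}$, i.e. $\models E''(cb,f_0(cb))$, and unwinding the explicit formula for $E''$ from the proof of Fact \ref{fact:e-hi is empty hi} (where the conjuncts asserting $f_0(b)\models\tp(b)$ and that $f_0$ respects the $b$-definition of $\bs{e}$ hold automatically since $f_0$ is an automorphism) this reduces to $\models E'(c,f_0(c);b)\vee cb=f_0(cb)$, whence $E'(c,d)$ in either case.

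The step I expect to be the main obstacle is this converse direction, and within it the maneuver of replacing an arbitrary $\bs{e}$-invariant type-definable $E'$ by a genuine $\emptyset$-type-definable hyperimaginary having a representative inside $\Sigma(\mathfrak{C})$ — this is exactly where the hypothesis $\bs{e}\in\dcl(\Sigma)$ and the Lascar tuple $b$ are essential, via Fact \ref{fact:e-hi is empty hi} — together with the bookkeeping that boundedness (case (1)) and algebraicity over $\bs{e}$ (case (2)) are preserved along interdefinability over $\bs{e}$. In case (1) one can shortcut by taking $E'$ to be $\kpequiv_{\bs{e}}$ itself, the finest bounded $\bs{e}$-invariant type-definable equivalence relation, but no single relation suffices in case (2), since the class $c_{\sequiv_{\bs{e}}}$ need not be algebraic over $\bs{e}$; the uniform argument above handles both cases at once.
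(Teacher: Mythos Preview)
Your proof is correct. For part~(2) it is essentially identical to the paper's: both use Lemma~\ref{lemma:preimages are bdd and acl} to identify $H=\aut_{\acl(\bs{e})\cap\Sigma}(\Sigma)$, then invoke Fact~\ref{fact:characterization of equivalence relations}(3)(c) together with Fact~\ref{fact:e-hi is empty hi} (applied with the Lascar tuple $b$) to pass from an arbitrary $\bs{e}$-invariant type-definable $E'$ to a hyperimaginary $(cb)_{E''}\in\acl(\bs{e})\restriction\Sigma$.

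For part~(1) there is a minor divergence. You run the same uniform argument as in~(2), replacing $\acl$ by $\bdd$; this is perfectly valid. The paper instead takes the shortcut you allude to at the end, but realized differently: rather than choosing $E'=\kpequiv_{\bs{e}}$, it uses Proposition~\ref{prop:description of closed subgroup} to write $H=\aut_{\bs{e}'\bs{e}}(\Sigma)$ for a single $\bs{e}'\in\bdd(\bs{e})$ (giving the easy direction), and then appeals to Lemma~\ref{lemma:kp and sh are closed} to get $H\le\autfkp(\Sigma,\bs{e})$ directly, which via Remark~\ref{remark:orbit equiv rels} yields the hard direction without invoking Fact~\ref{fact:e-hi is empty hi} at all. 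Your uniform treatment is arguably cleaner since it handles both cases at once; the paper's route for~(1) is shorter but relies on an additional lemma.
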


\begin{proof}
(1): By Proposition \ref{prop:description of closed subgroup},
we know that $\pi_{\Sigma}^{-1}[\gal_{\LL}^c(\Sigma, \bs{e})] = \aut_{\bs{e}'\bs{e}}(\mathfrak{C})$ for some $\bs{e}' \in \bdd(\bs{e})$.
Thus by commutativity of the diagram in Remark \ref{remark:diagram of maps}, $H = \xi[\aut_{\bs{e}'\bs{e}}(\mathfrak{C})] = \aut_{\bs{e}'\bs{e}}(\Sigma)$ and so $c \equiv^{H} d$ if and only if $c \equiv_{\bs{e}'\bs{e}} d$.
By Fact \ref{fact:characterization of equivalence relations}(2), $c \kpequiv_{\bs{e}} d$ if and only if $c \equiv_{\bdd(\bs{e})} d$, and $\bs{e}' \in \bdd(\bs{e})$, thus $c \kpequiv_{\bs{e}} d$ implies $c \equiv^H d$.

By Lemma \ref{lemma:kp and sh are closed}, we have $\gal_{\LL}^c(\Sigma, \bs{e}) \le \autfkp(\Sigma, \bs{e}) / \autfl(\Sigma, \bs{e})$ and hence $H\le \autfkp(\Sigma,\bs{e})$.
We already have proved that $c \kpequiv_{\bs{e}} d$ implies $c \equiv^H d$, and by Remark \ref{remark:orbit equiv rels}, $c \equiv^{\autfkp(\Sigma, \bs{e})} d$ if and only if $c \kpequiv_{\bs{e}} d$, so it follows that $c \equiv^H d$ if and only if $c \kpequiv_{\bs{e}} d$.

(2): By Fact \ref{fact:characterization of equivalence relations}(3), if $c \sequiv_{\bs{e}} d$, then $c \equiv_{\acl(\bs{e})} d$, so by Lemma \ref{lemma:preimages are bdd and acl}, $c \equiv^H d$.
Conversely, assume $c \equiv^H d$.
By Fact \ref{fact:characterization of equivalence relations}(3), it suffices to show that if $F$ is an $\bs{e}$-invariant type-definable equivalence relation and $c_F$ has finitely many automorphic images over $\bs{e}$, then $F(c, d)$ holds.

% (2): By Fact \ref{fact:characterization of equivalence relations}(3), if $c \sequiv_{\bs{e}} d$, then $c \equiv_{\acl(\bs{e})} d$, so by Lemma \ref{lemma:preimages are bdd and acl}, $c \equiv^H d$.
% Conversely, assume $c \equiv^H d$.
% By Fact \ref{fact:characterization of equivalence relations}(3), it suffices to show that for an $\bs{e}$-invariant equivalence relation $F$ with $c_F\in \acl(\bs{e})$, $F(c, d)$ holds.

Since $F$ is $\bs{e}$-invariant and $\bs{e} \in \dcl(b)$, by Fact \ref{fact:e-hi is empty hi}, there is an $\emptyset$-type-definable equivalence relation $F'$ such that $c_F$ and $(cb)_{F'}$ are interdefinable over $\bs{e}$.
% , $F$ is type-definable over $b$. Then by the same method of Fact \ref{fact:e-hi is empty hi}, there is an $\emptyset$-type-definable equivalence relation $F'$ such that $c_F$ and $(cb)_{F'}$ are interdefinable over $\bs{e}$.
But then $(cb)_{F'}$ is a hyperimaginary in $\acl(\bs{e})$ and $cb$ is a tuple of realizations of $\Sigma$, thus $(cb)_{F'} \in \acl(\bs{e}) \restriction \Sigma$.
By Lemma \ref{lemma:preimages are bdd and acl}, we know that there is $f \in \aut_{\acl(\bs{e}) \cap \Sigma}(\mathfrak{C})$ such that $f(c) = d$.
Then $f \in \aut_{(cb)_{F'}}(\mathfrak{C})$ by Remark \ref{remark:bdd(e) cap Sigma}, and $\aut_{(cb)_{F'}}(\mathfrak{C}) = \aut_{c_F}(\mathfrak{C})$ and so $F(c,d)$.
\end{proof}

Now we can give an answer to one of the main questions of this section, which says that the quotient groups of relativized KP and Shelah automorphisms are exactly the canonical normal subgroups of the relativized Lascar group.
Recall that $\bs{e} \in \dcl(\Sigma)$ means that there is a small tuple $c$ of realizations of $\Sigma$ such that $\bs{e} \in \dcl(c)$.

\begin{theorem}\label{thm:description of KP type}
Let $\Sigma$ be an $\bs{e}$-invariant type and suppose $\bs{e} \in \dcl(\Sigma)$.
Then
% {\color{red}Suppose $\bs{e}\in \dcl(\Sigma)$ and $\Sigma$ is type-definable over $\bs{e}$}.
\begin{enumerate}
    \item $\gal_{\LL}^c(\Sigma, \bs{e}) = \autfkp(\Sigma, \bs{e})/\autfl(\Sigma, \bs{e})$.
    \item $\gal_{\LL}^0(\Sigma, \bs{e}) = \autfsh(\Sigma, \bs{e})/\autfl(\Sigma, \bs{e})$.
\end{enumerate}
\end{theorem}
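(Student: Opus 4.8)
The plan is to deduce both equalities from Lemma \ref{lemma:finite characterization of orbit equiv}(2), which tells us that a subgroup $H\le\aute(\Sigma)$ containing $\autfl(\Sigma,\bs{e})$ with $\pi_b[H]$ closed is completely recovered from its orbit equivalence relation $\equiv^H$ on tuples of realizations of $\Sigma$ (namely, $\sigma\in H$ iff $\sigma$ fixes every $\equiv^H$-class). So for part (1) I would set $H_1:=\pi_b^{-1}[\gal_{\LL}^c(\Sigma,\bs{e})]$ and $H_2:=\autfkp(\Sigma,\bs{e})$ and argue that $H_1=H_2$, after which applying the surjection $\pi_b$ (whose kernel $\autfl(\Sigma,\bs{e})$ sits inside both) gives $\gal_{\LL}^c(\Sigma,\bs{e})=\pi_b[H_1]=\pi_b[H_2]=\autfkp(\Sigma,\bs{e})/\autfl(\Sigma,\bs{e})$.

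First I would check that both $H_1$ and $H_2$ satisfy the hypotheses of Lemma \ref{lemma:finite characterization of orbit equiv}(2). For $H_2=\autfkp(\Sigma,\bs{e})$: it contains $\autfl(\Sigma,\bs{e})$ by construction (as $\galkp$ is a quotient of $\gall$), and $\pi_b[H_2]=\autfkp(\Sigma,\bs{e})/\autfl(\Sigma,\bs{e})$ is closed by Lemma \ref{lemma:kp and sh are closed}. For $H_1$: it contains $\ker\pi_b=\autfl(\Sigma,\bs{e})$ since $\gal_{\LL}^c(\Sigma,\bs{e})\ni[\id]$, and $\pi_b[H_1]=\gal_{\LL}^c(\Sigma,\bs{e})$ (using surjectivity of $\pi_b$), which is closed, being the topological closure of a subgroup of the topological group $\gall(\Sigma,\bs{e})$. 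Next I would identify the two orbit equivalence relations: Remark \ref{remark:orbit equiv rels}(2) gives, for tuples $c,d$ of realizations of $\Sigma$, that $c\equiv^{H_2}d\iff c\kpequiv_{\bs{e}}d$, while Proposition \ref{prop:id clousre, component finest}(1) gives $c\equiv^{H_1}d\iff c\kpequiv_{\bs{e}}d$. Hence $\equiv^{H_1}$ and $\equiv^{H_2}$ are literally the same relation, so by Lemma \ref{lemma:finite characterization of orbit equiv}(2) the set of $\sigma\in\aute(\Sigma)$ fixing all of their classes is one and the same set, giving $H_1=H_2$.

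Part (2) runs along identical lines after the obvious substitutions: replace $\autfkp$ by $\autfsh$, $\gal_{\LL}^c(\Sigma,\bs{e})$ by $\gal_{\LL}^0(\Sigma,\bs{e})$, and $\kpequiv_{\bs{e}}$ by $\sequiv_{\bs{e}}$; use Remark \ref{remark:orbit equiv rels}(3) in place of (2) and Proposition \ref{prop:id clousre, component finest}(2) in place of (1); note that $\gal_{\LL}^0(\Sigma,\bs{e})$ is closed because the identity component of a topological group is closed, and that $\pi_b[\autfsh(\Sigma,\bs{e})]$ is closed by Lemma \ref{lemma:kp and sh are closed} while $\autfsh(\Sigma,\bs{e})\supseteq\autfl(\Sigma,\bs{e})$, so Lemma \ref{lemma:finite characterization of orbit equiv}(2) again applies to both subgroups.

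I do not anticipate a genuine obstacle here, since the statement is essentially a repackaging of the preceding lemmas; the only care needed is bookkeeping — verifying each hypothesis of Lemma \ref{lemma:finite characterization of orbit equiv}(2) for both subgroups (in particular, containment of $\autfl(\Sigma,\bs{e})$ and closedness of the $\pi_b$-image), and making sure the orbit-equivalence identifications of Remark \ref{remark:orbit equiv rels} and Proposition \ref{prop:id clousre, component finest} are being read on the correct side, i.e.\ on tuples of realizations of $\Sigma$ only. An alternative route avoiding Lemma \ref{lemma:finite characterization of orbit equiv}(2) is a two-sided containment: ``$\subseteq$'' because $\autfkp(\Sigma,\bs{e})/\autfl(\Sigma,\bs{e})$ is a closed subgroup and $\gal_{\LL}^c(\Sigma,\bs{e})$ is contained in every closed subgroup, and ``$\supseteq$'' via Lemma \ref{lemma:preimages are bdd and acl} by checking $\autfkp(\Sigma,\bs{e})\subseteq\aut_{\bdd(\bs{e})\cap\Sigma}(\Sigma)$ — but this last containment requires unwinding $c\equiv_{\bdd(\bs{e})}\sigma(c)$ to show that every extension of $\sigma$ fixes each $\bs{e}'\in\bdd(\bs{e})\restriction\Sigma$, which is more hands-on, so the orbit-equivalence argument is the cleaner one.
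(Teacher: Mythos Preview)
Your proposal is correct and follows essentially the same route as the paper's proof: define $H_1=\pi_b^{-1}[\gal_{\LL}^c(\Sigma,\bs{e})]$ and $H_2=\autfkp(\Sigma,\bs{e})$, identify $\equiv^{H_1}$ with $\equiv^{H_2}$ via Remark~\ref{remark:orbit equiv rels} and Proposition~\ref{prop:id clousre, component finest}, and conclude $H_1=H_2$ from Lemma~\ref{lemma:finite characterization of orbit equiv}. Your write-up is in fact more careful than the paper's, since you explicitly verify the hypotheses of Lemma~\ref{lemma:finite characterization of orbit equiv}(2) (containment of $\autfl(\Sigma,\bs{e})$ and closedness of the $\pi_b$-images via Lemma~\ref{lemma:kp and sh are closed}) and spell out the final passage from $H_1=H_2$ to the stated equality via $\pi_b$.
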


\begin{proof}
(1): Let $H_1 = \pi_b^{-1}[\gal_{\LL}^c(\Sigma, \bs{e})]$ and $H_2 = \autfkp(\Sigma, \bs{e})$.
By Remark \ref{remark:orbit equiv rels} and Proposition \ref{prop:id clousre, component finest}, we have that $\equiv^{H_1}$ and $\equiv^{H_2}$ are the same equivalence relations on the tuples of realizations of $\Sigma(\mathfrak{C})$.
So, by Lemma \ref{lemma:finite characterization of orbit equiv}, $H_1 = H_2$.

(2): By exactly the same proof as (1), letting $H_1 = \pi_b^{-1}[\gal_{\LL}^0(\Sigma, \bs{e})]$ and $H_2 = \autfsh(\Sigma, \bs{e})$.
\end{proof}

We summarize the results obtained to prove Theorem \ref{thm:description of KP type} in this section, and state the characterization of strong types in $\Sigma$, which is analogous to Fact \ref{fact:characterization of equivalence relations}, in the following corollary.
It says that having the same strong type can be, in some sense, localized by finding a partial type that contains the real tuples of interest.

\begin{corollary}\label{cor:char of KP and SH in Sigma}
Let $\Sigma$ be an $\bs{e}$-invariant type and suppose $\bs{e} \in \dcl(\Sigma)$.
% {\color{red}Suppose $\bs{e}\in \dcl(\Sigma)$ and $\Sigma$ is type-definable over $\bs{e}$.}
Let $c$ and $d$ be tuples of realizations of $\Sigma$.
\begin{enumerate}
    \item The following are equivalent.
    \begin{enumerate}
        \item $c \kpequiv_{\bs{e}} d$.
        \item $c \equiv_{\bdd(\bs{e}) \cap \Sigma} d$.
        \item There is $\sigma \in \pi_b^{-1}[\gal_{\LL}^c(\Sigma, \bs{e})]$ such that $\sigma(c) = d$.
        % \item There is $\sigma \in \autfkpres(\Sigma, \bs{e})$
    \end{enumerate}
    \item The following are equivalent.
    \begin{enumerate}
        \item $c \shequiv_{\bs{e}} d$
        \item $c \equiv_{\acl(\bs{e}) \cap \Sigma} d$.
        \item There is $\sigma \in \pi_b^{-1}[\gal_{\LL}^0(\Sigma, \bs{e})]$ such that $\sigma(c) = d$.
    \end{enumerate}
\end{enumerate}
\end{corollary}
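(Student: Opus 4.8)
The plan is to obtain the corollary almost formally from three results already established above: Proposition~\ref{prop:id clousre, component finest}, Lemma~\ref{lemma:preimages are bdd and acl}, and Fact~\ref{fact:characterization of equivalence relations}. I would begin by recording one short bookkeeping observation: for any small set $A$ of hyperimaginaries and any tuples $c,d$ of realizations of $\Sigma$, the assertions ``$c\equiv_A d$'', ``$f(c)=d$ for some $f\in\aut_A(\mathfrak{C})$'', and ``$\sigma(c)=d$ for some $\sigma\in\aut_A(\Sigma)$'' are mutually equivalent. The first two match up by strong homogeneity of $\mathfrak{C}$ over hyperimaginaries: replace $A$ by a single interdefinable hyperimaginary using Fact~\ref{fact:countable hyperimagianry is enough} and apply Definition~\ref{definition: hyperimaginary type}(2). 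The second and third match up because $\aut_A(\Sigma)=\xi[\aut_A(\mathfrak{C})]$ and $c,d\in\Sigma(\mathfrak{C})$, so an automorphism of $\mathfrak{C}$ and its restriction to $\Sigma(\mathfrak{C})$ send $c$ to $d$ simultaneously.

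For part (1), I would set $H=\pi_b^{-1}[\gal_{\LL}^c(\Sigma,\bs{e})]\le\aute(\Sigma)$. By definition of the orbit equivalence relation, condition (c) is precisely ``$c\equiv^H d$'', so (a)$\Leftrightarrow$(c) is exactly Proposition~\ref{prop:id clousre, component finest}(1). For (b)$\Leftrightarrow$(c), Lemma~\ref{lemma:preimages are bdd and acl} identifies $H$ with $\aut_{\bdd(\bs{e})\cap\Sigma}(\Sigma)$, so the bookkeeping observation, applied with $A=\bdd(\bs{e})\cap\Sigma$, rewrites (c) as ``$c\equiv_{\bdd(\bs{e})\cap\Sigma}d$'', which is (b). (One also sees (a)$\Rightarrow$(b) directly, since $\kpequiv_{\bs{e}}$ is $\equiv_{\bdd(\bs{e})}$ by Fact~\ref{fact:characterization of equivalence relations}(2) and $\bdd(\bs{e})\cap\Sigma\subseteq\bdd(\bs{e})$; the substantive ``localization'' direction (b)$\Rightarrow$(a) is what the detour through (c) provides.)

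Part (2) will be the verbatim mirror of part (1), with $\gal_{\LL}^c$ replaced by $\gal_{\LL}^0$, $\bdd(\bs{e})$ by $\acl(\bs{e})$, Proposition~\ref{prop:id clousre, component finest}(1) by its clause (2), the first identity of Lemma~\ref{lemma:preimages are bdd and acl} by its second, and Fact~\ref{fact:characterization of equivalence relations}(2) by (3). The substantive work has all been done upstream, in Proposition~\ref{prop:description of closed subgroup}, Lemma~\ref{lemma:preimages are bdd and acl}, and Proposition~\ref{prop:id clousre, component finest}, so there is no real obstacle remaining in the corollary itself; the only point that deserves care is the bookkeeping observation --- confirming that ``fixing $\bdd(\bs{e})\cap\Sigma$'' means the same thing whether it is read inside $\aut(\mathfrak{C})$, inside $\aut(\Sigma)$, or as equality of complete types over a small set of hyperimaginaries. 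For that reason I would state and dispatch it once, as a preliminary step, rather than inline it separately into each of the two parts.
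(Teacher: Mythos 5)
Your proposal is correct and follows essentially the same route as the paper: the paper also gets (a)$\Rightarrow$(b) from Fact \ref{fact:characterization of equivalence relations}, (b)$\Leftrightarrow$(c) from Lemma \ref{lemma:preimages are bdd and acl}, and (c)$\Rightarrow$(a) from Proposition \ref{prop:id clousre, component finest}, with part (2) treated as the mirror case. Your preliminary bookkeeping observation just makes explicit the identification of $\equiv_{\bdd(\bs{e})\cap\Sigma}$ with the orbit relation of $\aut_{\bdd(\bs{e})\cap\Sigma}(\Sigma)$, which the paper leaves implicit.
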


\begin{proof}
(1): $(a) \Rightarrow (b)$ is clear by Fact \ref{fact:characterization of equivalence relations}(2).
By Lemma \ref{lemma:preimages are bdd and acl}, $\pi_b^{-1}[\gal_{\LL}^c(\Sigma, \bs{e})] = \aut_{\bdd(\bs{e}) \cap \Sigma}(\Sigma)$, so $(b)$ and $(c)$ are equivalent.
$(c) \Rightarrow (a)$ follows from Proposition \ref{prop:id clousre, component finest}.
The proof for (2) is similar.
\end{proof}

\section{Restricted model theoretic Galois groups}\label{sec:rest and example}

There is another possible approach to consider the subgroups of the automorphism group in $\Sigma$, by restricting the domains of the automorphisms in the subgroup, which was considered in \cite[Section 3]{DKL17}.

\begin{definition}\label{def:restricted groups}
For $X \in \{\LL, \KP, \SH\}$,
$$\autf_X^{\res}(\Sigma, \bs{e}) = \{f \upharpoonright \Sigma(\mathfrak{C}): f \in \autf_X(\mathfrak{C}, \bs{e})\} \le \aute(\Sigma).$$
\end{definition}

\begin{remark}\label{remark:orbit_equiv_are_same}$ $
\begin{enumerate}
    \item For $X \in \{\LL, \KP, \SH\}$, $\autf_X^{\res}(\Sigma, \bs{e}) \le \autf_X(\Sigma, \bs{e})$.
    \item Let $H_1 = \autf_X^{\res}(\Sigma, \bs{e})$ and $H_2 = \autf_X(\Sigma, \bs{e})$.
    Then $\equiv^{H_1}$ and $\equiv^{H_2}$ are the same.
\end{enumerate}

\end{remark}
\begin{proof}
(1) is clear, and for (2), we only need to show that $c \equiv^{H_2} d$ implies $c \equiv^{H_1} d$ where $c, d$ are tuples of realizations of $\Sigma$.
By Remark \ref{remark:orbit equiv rels}, $\equiv^{H_2}$ is the same as $\equiv^X_{\bs{e}}$, thus $c \equiv^{H_2} d$ implies $c \equiv^X_{\bs{e}} d$ and hence there is $f \in \autf_X(\mathfrak{C}, \bs{e})$ such that $f(c) = d$.
So, $c \equiv^{H_1} d$.
% But $\equiv^{H_1}$ is an $\bs{e}$-invariant bounded equivalence relation on $\Sigma(\mathfrak{C})$ and $\equiv^{H_2}$ is the finest such one by Observation \ref{obs:orbit equiv rels}, so the remark follows.
\end{proof}

Of course, we can relativize or take a restriction of any subgroup of the automorphism group.
We will use the following notation for general subgroups.

\begin{definition}\label{def:general definition of subgroup}
Given a subgroup  $H \le \aute(\mathfrak{C})$, let $\equiv^H$ be the orbit equivalence relation.
\begin{enumerate}
    \item $\aut_{\equiv^H}(\Sigma, \bs{e}) = \{\sigma \in \aute(\Sigma): $ for any tuple $c$ of realizations of $\Sigma$, $c \equiv^H \sigma(c)\}$.
    \item $H^{\res} = \xi[H] =  \{f \upharpoonright \Sigma(\mathfrak{C}): f \in H\}$.
\end{enumerate}
\end{definition}

\begin{remark}\label{remark:res for L, KP, Sh}$ $
\begin{enumerate}
    \item For any $H \le \aute(\mathfrak{C})$, $H^{\res} \le \aut_{\equiv^H}(\Sigma, \bs{e})$.
    \item For $X \in \{\LL, \KP, \SH\}$,
    \begin{enumerate}
        \item $\autf_X(\Sigma, \bs{e}) = \aut_{\equiv^{X}_{\bs{e}}}(\Sigma, \bs{e})$, and
        \item $\autf_X^{\res}(\Sigma, \bs{e}) = \autf_X(\mathfrak{C}, \bs{e})^{\res}$.
    \end{enumerate}
\end{enumerate}
\end{remark}

The following proposition implies that if $\autfl(\mathfrak{C}, \bs{e}) \le H$ and $\autfl(\Sigma, \bs{e}) \le H^{\res}$, then $H^{\res} = \aut_{\equiv^H}(\Sigma, \bs{e})$.
% While it is unclear whether two groups $H^{\res}$ and $\aut_{\equiv^H}(\Sigma, \bs{e})$ are the same in general, at least, we can ascertain the following fact.

\begin{proposition}\label{prop:orbit equivlance and res}
Let $H \le \aute(\mathfrak{C})$ such that $\equiv^H$ is weaker than $\lequiv_{\bs{e}}$ (that is, if $c \lequiv_{\bs{e}} d$, then $c \equiv^H d$).
% For any two subgroups $G_1, G_2$ of $G$, $G_1G_2$ denotes $\{g_1g_2: g_1 \in G_1$ and $g_2 \in G_2\}$.
Then
$$H^{\res} \autfl(\Sigma, \bs{e}) = \aut_{\equiv^H}(\Sigma, \bs{e})$$
where $H^{\res} \autfl(\Sigma, \bs{e}) = \{g_1g_2 \in \aute(\Sigma): g_1 \in H^{\res}$, $g_2 \in \autfl(\Sigma, \bs{e})\}$.
% Then $H^{\res} \autfl(\Sigma, \bs{e}) = \aut_{\equiv^H}(\Sigma, \bs{e})$, where $H^{\res} \autfl(\Sigma, \bs{e})$ is the smallest subgroup of $\aute(\Sigma)$, which contains $H^{\res}$ and $\autfl(\Sigma, \bs{e})$.
Especially, for $X \in \{\LL, \KP, \SH\}$, $\autf_X^{\res}(\Sigma, \bs{e}) \autfl(\Sigma, \bs{e}) = \autf_X(\Sigma, \bs{e})$.
\end{proposition}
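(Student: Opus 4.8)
The plan is to establish the two inclusions separately; the inclusion $\supseteq$ is the substantive one and will be proved with the help of a Lascar tuple, while $\subseteq$ is formal.

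For $\subseteq$, I would first observe that $\aut_{\equiv^H}(\Sigma,\bs e)$ is a subgroup of $\aute(\Sigma)$: closure under composition and inverses is immediate from transitivity and symmetry of $\equiv^H$, using that $\sigma(c)$ is again a tuple of realizations of $\Sigma$ whenever $c$ is (as $\Sigma$ is $\bs e$-invariant and $\sigma$ extends to an element of $\aute(\mathfrak C)$). By Remark \ref{remark:res for L, KP, Sh}(1), $H^{\res}\le\aut_{\equiv^H}(\Sigma,\bs e)$. Moreover $\autfl(\Sigma,\bs e)\le\aut_{\equiv^H}(\Sigma,\bs e)$: if $\sigma\in\autfl(\Sigma,\bs e)$ and $c$ is any tuple of realizations of $\Sigma$, then $c\lequiv_{\bs e}\sigma(c)$, so the hypothesis that $\equiv^H$ is weaker than $\lequiv_{\bs e}$ gives $c\equiv^H\sigma(c)$. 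Since $\aut_{\equiv^H}(\Sigma,\bs e)$ is a subgroup containing both $H^{\res}$ and $\autfl(\Sigma,\bs e)$, it contains the set of products $H^{\res}\autfl(\Sigma,\bs e)$.

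For $\supseteq$, I would fix a Lascar tuple $b$ in $\Sigma$ over $\bs e$ with $\bs e\in\dcl(b)$, which exists by the standing assumption of Section \ref{sec:top on rel}. Let $\sigma\in\aut_{\equiv^H}(\Sigma,\bs e)$. Applying the defining property of $\aut_{\equiv^H}(\Sigma,\bs e)$ to the tuple $b$ gives $b\equiv^H\sigma(b)$, so there is $h\in H$ with $h(b)=\sigma(b)$. Set $g_1:=\xi(h)=h\upharpoonright\Sigma(\mathfrak C)\in H^{\res}$ and $g_2:=g_1^{-1}\sigma\in\aute(\Sigma)$. Since $b$ is a tuple in $\Sigma(\mathfrak C)$ and $g_1(b)=h(b)=\sigma(b)$, we get $g_2(b)=g_1^{-1}(\sigma(b))=b$, i.e. $g_2$ fixes $b$ pointwise. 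Writing $g_2=\xi(f)$ for some $f\in\aute(\mathfrak C)$ (using surjectivity of $\xi$), the lift $f$ also fixes $b$ pointwise because $b\subseteq\Sigma(\mathfrak C)$, so $f\in\aut_b(\mathfrak C)$ and hence $g_2\in\aut_b(\Sigma)$. As $\bs e\in\dcl(b)$ and $b$ is a Lascar tuple, $\aut_b(\Sigma)=\aut_{b\bs e}(\Sigma)\le\autfl(\Sigma,\bs e)$ — this is exactly the argument of Remark \ref{remark:e in dcl(b) is necessary}, also used at the end of the proof of Lemma \ref{lemma:finite characterization of orbit equiv}. Therefore $g_2\in\autfl(\Sigma,\bs e)$ and $\sigma=g_1g_2\in H^{\res}\autfl(\Sigma,\bs e)$.

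Finally, for the ``especially'' clause, apply the statement with $H=\autf_X(\mathfrak C,\bs e)$ for $X\in\{\LL,\KP,\SH\}$. The orbit equivalence relation $\equiv^H$ coincides with $\equiv^X_{\bs e}$, which is weaker than $\lequiv_{\bs e}$ since $\autfl(\mathfrak C,\bs e)\le\autf_X(\mathfrak C,\bs e)$, so the hypothesis holds; and by Remark \ref{remark:res for L, KP, Sh}(2) we have $H^{\res}=\autf_X^{\res}(\Sigma,\bs e)$ and $\aut_{\equiv^H}(\Sigma,\bs e)=\autf_X(\Sigma,\bs e)$. Hence the general statement specializes to $\autf_X^{\res}(\Sigma,\bs e)\autfl(\Sigma,\bs e)=\autf_X(\Sigma,\bs e)$. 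The only point requiring care is the restriction-lifting step in the $\supseteq$ direction — checking that $g_1^{-1}\sigma$, a priori merely an element of $\aute(\Sigma)$, genuinely lies in $\aut_b(\Sigma)=\xi[\aut_b(\mathfrak C)]$ — which is the same subtlety that recurs throughout Section \ref{sec:top on rel}.
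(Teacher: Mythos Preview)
Your proof is correct and follows essentially the same approach as the paper's. For $(\supseteq)$ you argue exactly as the paper does, using a Lascar tuple $b$ to find $h\in H$ with $h(b)=\sigma(b)$ and then observing $g_1^{-1}\sigma\in\aut_b(\Sigma)\le\autfl(\Sigma,\bs e)$; for $(\subseteq)$ you phrase it as a subgroup-containment argument while the paper computes $\sigma(b)=\tau\tau'(b)\equiv^H\tau'(b)\equiv^H b$ directly, but this is only a cosmetic difference.
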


\begin{proof}
$(\subseteq)$: Let $\sigma \in H^{\res} \autfl(\Sigma, \bs{e})$, so that $\sigma = \tau \tau'$ for some $\tau \in H^{\res}$ and $\tau' \in \autfl(\Sigma, \bs{e})$.
It is enough to show that for a Lascar tuple $b$, $\sigma(b) \equiv^H b$, and
$$\sigma(b) = \tau \tau'(b) \equiv^H \tau'(b) \equiv^H b.$$

$(\supseteq)$: Take $\sigma \in \aut_{\equiv^H}(\Sigma, \bs{e})$.
For a Lascar tuple $b$ in $\Sigma$, $b \equiv^H \sigma(b)$, thus there is $\tau \in H$ such that $\tau(b) = \sigma(b)$.
Then letting $\tau_{\Sigma} = \tau \upharpoonright \Sigma(\mathfrak{C})$, we have
$$\tau_{\Sigma}^{-1} \sigma \in \aut_b(\Sigma) \le \autfl(\Sigma, \bs{e}),$$ and so $$\sigma \in \tau_{\Sigma} \cdot \autfl(\Sigma, \bs{e}) \subseteq H^{\res} \autfl(\Sigma, \bs{e}).$$
\end{proof}

%\begin{example}
%We consider a multi-sorted structure in \cite[Section 4]{CLPZ01}.
%Let $\mathcal M=(M_n,S_n,g_n,\pi_n)_{n \ge 1}$, where for each $n \ge 1$,
%\begin{itemize}
%	\item $M_n$ is a unit circle,
%	\item $S_n$ is a ternary relation on $M_n$ saying that $S_n(a,b,c)$ holds if and only if $a,b,c$ are distinct and $b$ comes before $c$ going around the circle clockwise starting from $a$,
%	\item $g_n$ is a unary function on $M_n$ taking the clockwise rotation by $\frac{2\pi}{n}$-radians,
%	\item $\pi_n$ is a function from $M_n$ to $M_1$ such that $x\mapsto x^n$.
%\end{itemize}
%Put $T:=\Th(\mathcal M)$ and for each $n\ge 1$, $T_n:=\Th(M_n,S_n,g_n)$. Then, by \cite[Section 4]{CLPZ01} (or \cite[Section 3.1]{DKKL21}), we have
%\begin{itemize}
%	\item $T$ and each $T_n$ have quantifier elimination,
%	\item $\gall(T)$ is not $G$-compact, that is, $\gall(T)$ is not Hausdorff, and
%	\item each $\gall(T_n)$ is trivial, so $G$-compact.
%\end{itemize}
%Let $\mathfrak{C}=(\mathfrak{M}_n,S_n,g_n,\pi_n)_{n\ge 1}$ be a monster model of $T$. Now let $\Sigma(x)$ be the sort corresponding to $\mathfrak{M}_1$. Then, since $\gall(T_1)$ is trivial, we have
%$$\aut(\mathfrak{M_1})=\aut(\Sigma)=\autfl(\Sigma)=\autflres(\Sigma).$$
%\end{example}

Consideration of the small box topology in Remark \ref{rem:res_aut_ptwise_top}, and Example \ref{example:necessary condition on H} are suggested by the referee.

\begin{remark}\label{rem:res_aut_ptwise_top}
Let $H \le \aute(\mathfrak{C)}$ and consider the small box topology on $\aute(\Sigma)$, that is, let basic open sets be of the form $U_{c,d}:=\{\sigma\in \aute(\Sigma):\sigma(c)=d\}$, where $c$ and $d$ are small tuples of realizations of $\Sigma$.

\begin{itemize}
    \item $\aut_{\equiv^H}(\Sigma,\bs{e})$ is a closed subset of $\aute(\Sigma)$:\\
    Given $\sigma \notin \aut_{\equiv^H}(\Sigma,\bs{e})$, there is a small tuple $c$ of realizations of $\Sigma$ such that $c \not\equiv^H \sigma(c)$.
    Then the open neighborhood $U_{c,\sigma(c)}$ of $\sigma$ is disjoint from $\aut_{\equiv^H}(\Sigma,\bs{e})$.
    \item $H^{\res}$ is dense in $\aut_{\equiv^H}(\Sigma,\bs{e})$:\\
    If $U_{c, d}$ intersects $\aut_{\equiv^H}(\Sigma,\bs e)$, then $c \equiv^H d$ and so $U_{c, d}$ intersects $H^{\res}$.
\end{itemize}
Thus we can conclude that $\aut_{\equiv^H}(\Sigma,\bs{e})$ is the closure of $H^{\res}$ in the small box topology.

If $\xi[H]( = H^{\res})$ contains $\autfl(\Sigma, \bs{e})$ and $\pi_{\Sigma}[H]$ is a closed subgroup of $\gall(\Sigma, \bs{e})$, then the pointwise convergence topology on $\aute(\Sigma)$ with basic open sets $U_{c, d} = \{\sigma \in \aute(\Sigma): \sigma(c) = d\}$, where $c$ and $d$ are finite tuples of realizations of $\Sigma$ gives the same result:

In the first bullet of the argument for the small box topology, by Lemma \ref{lemma:finite characterization of orbit equiv}(1), $\sigma \notin \aut_{\equiv^H}(\Sigma, \bs{e})$ implies that there is a finite tuple $c$ of realizations of $\Sigma$ such that $c \not\equiv^H \sigma(c)$ (note that $\equiv^H$ is the same as $\equiv^{H^{\res}}$ for tuples of realizations of $\Sigma$).
Thus $\aut_{\equiv^H}(\Sigma,\bs{e})$ is also the closure of $H^{\res}$ in the pointwise convergence topology. 

% Let $H$ be a subgroup of $\aute(\mathfrak{C})$ containing $\autfl(\mathfrak{C},\bs{e})$ such that $\pi[H]$ is a closed subgroup of $\gall(T,\bs{e})$. By Remark \ref{remark:diagram of maps}, since $\pi[H]$ is a closed subgroup of $\gall(T,\bs{e})$, $\pi_{\Sigma}[H]$ is also a closed subgroup of $\gall(\Sigma,\bs{e})$. 

% Consider the pointwise convergence topology on $\aute(\Sigma)$, that is, defining basic open sets on $\aute(\Sigma)$ of the form $U_{a,b}:=\{\sigma\in \aute(\Sigma):\sigma(c)=d\}$ for finite tuples $c,d$ of realizations of $\Sigma$. Then, by Lemma \ref{lemma:finite characterization of orbit equiv}(1), $\aut_{\equiv^H}(\Sigma,\bs{e})$ is a closed subset of $\aute(\Sigma)$. Indeed, given $\sigma \notin \aut_{\equiv^H}(\Sigma,\bs{e})$, there is small tuples $a$ and $b$ of realizations of $\Sigma$ such that $a\notin\equiv^H b(=\sigma(a))$. By Lemma \ref{lemma:finite characterization of orbit equiv}(1), we may assume that $a$ and $b$ are finite tuples and so there is an open neighborhood $U_{a,b}$ of $\sigma$ disjoint from $\aut_{\equiv^H}(\Sigma,\bs{e})$.  Also, $H^{res}$ is dense in $\aut_{\equiv^H}(\Sigma,\bs{e})$ and so $\aut_{\equiv^H}(\Sigma,\bs{e})$ is the closure of $H^{res}$. 
\end{remark}

According to the observations in Remark \ref{rem:res_aut_ptwise_top}, we can easily find an example such that $H^{\res} \ne \aut_{\equiv^H}(\Sigma, \bs{e})$:

\begin{example}\label{example:necessary condition on H}
Let $\mathfrak{C}$ be a saturated pure set in the empty language, $\bs{e}=\emptyset$, and $\Sigma=\{x=x\}$.
Note that $\aut(\mathfrak{C})=\autfl(\mathfrak{C})$ is the group of all permutations of $\mathfrak{C}$.
% so $\lequiv\ =\ \kpequiv\ =\ \shequiv$.

Now, let $H \le \aut(\mathfrak{C})$ be the group of all permutations of small support.

Then $\equiv^H$ holds between every two small tuples of $\mathfrak{C}$, and $H^{\res} = H\neq \aut(\mathfrak{C})=\aut_{\equiv^H}(\Sigma, \bs{e})$.

\end{example}

Note that in Example \ref{example:necessary condition on H}, $\autflres(\Sigma, \bs{e}) = \aut(\mathfrak{C}) = \autfl(\Sigma, \bs{e})$, thus this example does not answer the second question of \cite[Question 3.8]{DKL17}, which asked if there is an example such that $\autflres(\Sigma, \emptyset) \ne \autfl(\Sigma, \emptyset)$.
Also, in Example \ref{example:necessary condition on H}, $\gall(T, \bs{e}) = \gall(\Sigma, \bs{e})$ is trivial (so that $\pi[H]$ and $\aut_{\equiv^H}(\Sigma, \bs{e})$ are trivially closed), but $H = H^{\res}$ does not contain $\autfl(\mathfrak{C}, \bs{e}) = \autfl(\Sigma, \bs{e}) = \aut(\mathfrak{C})$. Based on Remark \ref{rem:res_aut_ptwise_top} and Example \ref{example:necessary condition on H}, we ask the following question, which is a variant of the second question of \cite[Question 3.8]{DKL17}.

%keeping the latter part of Remark \ref{rem:res_aut_ptwise_top} about the pointwise convergence topology in our mind, we ask the following question, which is a variant of the second question of \cite[Question 3.8]{DKL17}.

\begin{question}\label{question:res=fix} $ $
%{\color{blue}If $H \le \aute(\mathfrak{C})$ satisfies $\autfl(\Sigma,\bs{e}) \le H^{\res}$ and $\pi_{\Sigma}[H]$ is a closed subgroup of $\gall(\Sigma, \bs{e})$ (so that $\aut_{\equiv^H}(\Sigma, \bs{e})$ is the closure of $H^{\res}$ in the pointwise convergence topology by Remark \ref{rem:res_aut_ptwise_top})}, do we have that 
%$$H^{\res}= \aut_{\equiv^H}(\Sigma,\bs{e})?$$
\begin{enumerate}
	\item Do we have that $\autf_{\LL}^{\res}(\Sigma,\bs{e})=\autfl(\Sigma,\bs{e})$? By Proposition \ref{prop:orbit equivlance and res}, this implies that for any subgroup $H$ of $\aute(\mathfrak{C})$ containing $\autfl(\mathfrak{C},\bs{e})$, $H^{\res}=\aut_{\equiv^H}(\Sigma,\bs{e})$.
	\item For a subgroup $H$ of $\aute(\mathfrak{C})$ containing $\autfl(\mathfrak{C},\bs{e})$ such that $\pi[H]$ is a closed subgroup of $\gall(T,\bs{e})$ and $\aut_{\equiv^H}(\Sigma,\bs{e})$ is a closed subgroup of $\gall(\Sigma,\bs{e})$, do we have that $H^{\res}= \aut_{\equiv^H}(\Sigma,\bs{e})$? Especially, do we have that $\autfkp(\mathfrak{C},\bs{e})^{\res}=\autfkp(\Sigma,\bs{e})$ and $\autfsh(\mathfrak{C},\bs{e})^{\res}=\autfsh(\Sigma,\bs{e})$?
\end{enumerate}

% For a subgroup $H$ of $\aute(\mathfrak{C})$ containing $\autfl(\mathfrak{C},\bs{e})$ such that $\aut_{\equiv^H}(\Sigma,\bs{e})$ is a closed subgroup of $\gall(T,\bs{e})$, do we have that 
% $$H^{res}= \aut_{\equiv^H}(\Sigma,\bs{e})?$$
\end{question}

%{\color{red}
%%{\color{blue}In Question \ref{question:res=fix}, if we additionally assume that $\autfl(\mathfrak{C}, \bs{e}) \le H$}, then by Proposition \ref{prop:orbit equivlance and res}, we have a positive answer for Question \ref{question:res=fix}.
%If $\autf_{\LL}^{res}(\Sigma,\bs{e})=\autfl(\Sigma,\bs{e})$, then by Proposition \ref{prop:orbit equivlance and res}, we have a positive answer for Question \ref{question:res=fix}.
%}

% Without the assumption that $H$ contains $\autfl(\mathfrak{C},\bs{e})$, there is an example having a subgroup $H$ of $\aut_{\bs{e}}(\mathfrak{C})$ such that $H\subsetneq \autfl(\mathfrak{C},\bs{e})$ and $\equiv^H=\lequiv_{\bs{e}}=\equiv_{\bs{e}}$ on $\Sigma$.

\end{document}